\xpatchcmd{\proof}{\itshape}{\normalfont\proofnamefont}{}{}
\newcommand{\proofnamefont}{}
\newcommand{\email}[1]{\href{mailto:#1}{#1}}
\definecolor{refkey}{rgb}{1,0,0}
\definecolor{labelkey}{rgb}{1,0,0}
\newtheorem{theorem}{Theorem}[section]
\newtheorem{lemma}[theorem]{Lemma}
\newtheorem{corollary}[theorem]{Corollary}
\newtheorem{proposition}[theorem]{Proposition}
\theoremstyle{definition}
\newtheorem{example}[theorem]{Example}
\theoremstyle{remark}
\newtheorem{remark}[theorem]{\textbf{Remark}}
\numberwithin{equation}{section}
\pgfplotsset{compat=newest}
\begin{document}

\title{
Periodicity in delayed self-regulation is a predator-prey process}

\author{Alejandro López-Nieto\thanks{Department of Mathematics, National Taiwan University, No. 1, Sec. 4, Roosevelt Road, 10617 Taipei, Taiwan; \email{alopez@ntu.edu.tw}.}}
\date{ }

\maketitle
\thispagestyle{empty}
\newpage

\tableofcontents
\newpage
\abstract{
We unify two different periodicity mechanisms: delayed self-regulation and planar predator-prey feedback. We consider scalar delay differential equations $\dot x(t) = rf(x(t), x(t - 1))$ where $f$ is monotone in the delayed component. Due to a Poincar\'{e}--Bendixson theorem for monotone delayed feedback systems, the typical global dynamics present periodic orbits as the delay parameter $r$ increases. In this article, we show that, as we vary the delay, each connected component of periodic orbits is an annulus with global coordinates given by the time and the amplitude of the corresponding periodic solutions. On each annulus, the variables $x(t)$ and $x(t - 1)$ solve an integrable ordinary differential equation that satisfies a predator-prey feedback relation. Moreover, we completely characterize the set of periodic solutions of the delay differential equation in terms of two time maps generated by the underlying predator-prey system. }

\section{Introduction}

The Hutchinson delayed logistic equation
\begin{align} \label{hutchinson}
    \dot{{N}}(t) = r{N}(t)(1 - {N}(t - 1)), \quad r > 0,
\end{align}
models intrinsic oscillations in biodemographics; see \cite{Hut48}. In the Hutchinson equation \eqref{hutchinson}, the population density ${N}(t)$ self-inhibits through competition between younger and older individuals ${N}(t)$ and ${N}(t - 1)$. Typically, for a sufficiently large value of the parameter $r > 0$, the solutions of \eqref{hutchinson} converge to an exponentially attracting periodic solution that oscillates around the saturation density ${N}(t) \equiv 1$. After taking the logarithm, the periodic solutions of \eqref{hutchinson} solve a delay differential equation (abbrv. DDE) of the form
\begin{align}\label{reference-dde}
    \dot x(t) &= r f(x(t),x(t-1)),\quad f \in C^k(\mathbb{R}^2, \mathbb{R}), \, k \geq 2, \quad r\partial_2 f \neq 0.
\end{align}
The delayed self-regulation of the population, encoded as $r\partial_2 f \neq 0$ in \eqref{reference-dde}, is called \emph{monotone delayed feedback}. Here, the parameter $r$ is crucial because the time rescaling $t \mapsto rt$ produces the equivalent equation
\begin{align}\label{reference-dde-resc}
    \dot {\hat{x}}(t) = f(\hat{x}(t), \hat{x}(t - r)).
\end{align}
Thus, $r$ is the \emph{delay} of \eqref{reference-dde-resc}.

Since Hutchinson's original work, the delayed self-regulation model \eqref{reference-dde} has been pointed out as the cause of oscillations in blood cell density \cite{MaGla77}, oceanic temperature \cite{SuSch88}, and gene expression in the segmentation clock \cite{Lew03, YoKo20}. Traditionally, there exist two separate types of delayed feedback: \emph{positive} if $r\partial_2 f > 0$ and \emph{negative} otherwise. Each type models delayed autocatalysis and self-inhibition, respectively. 

Mathematically, the DDE \eqref{reference-dde} is infinite-dimensional as it generates an evolution process on $C$ where $C := (C^0([-1, 0], \mathbb{R}), |\cdot|_{\sup})$ is the \emph{history space}. A \emph{periodic solution} of \eqref{reference-dde} is a nonconstant $C^1$-periodic function $x^\ast: \mathbb{R} \to \mathbb{R}$ that satisfies \eqref{reference-dde} for a delay value $r_\ast \neq 0$. The corresponding \emph{orbit} is the function-valued curve
\begin{align}\label{def-orbit}
    \gamma_{\ast} := \left\{x_t^\ast : t\in \mathbb{R}\right\} \subset C,
\end{align}
where $x_t^\ast$ is defined as $x_t^\ast := x^\ast(t + \vartheta)$ with $\vartheta \in [-1, 0]$. The monotone delayed feedback assumption in the DDE \eqref{reference-dde} is a crucial structural feature due to a Poincar\'{e}--Bendixson theorem \cite{MPSe962}, which ensures that the projection
\begin{align}\label{planar-projection-C}
    \begin{array}{rcl}
        P : C & \longrightarrow & \mathbb{R}^2\\
        \varphi & \longmapsto & (\varphi(0), \varphi(-1)),
    \end{array}
\end{align}
$C^k$-embeds the periodic orbits of \eqref{reference-dde} into $\mathbb{R}^2$. Thus, the projection 
\begin{align}
    P\gamma_\ast = \{(x^\ast(t), x^\ast(t - 1)) : t \in \mathbb{R}\}
\end{align}
is a $C^k$-regular Jordan curve. Moreover, if $x^\dagger(t)$ is another periodic solution of \eqref{reference-dde} at delay $r_\dagger$ and $r_\ast = r_\dagger$, then we have a dichotomy 
\begin{align}\label{def-nesting}
    \text{either} \quad P\gamma_\ast \cap P\gamma_\dagger = \emptyset \quad \text{or} \quad P\gamma_\ast = P\gamma_\dagger.
\end{align}
We say that \eqref{def-nesting} is the \emph{nesting property} of the periodic orbits of \eqref{reference-dde}; see \cite[Lemma 5.7]{MPSe962}. 

The goal of this article is to extend the nesting property \eqref{def-nesting} and drop the constraint that both $x^\ast(t)$ and $x^\dagger(t)$ solve \eqref{reference-dde} at the same delay $r_\ast = r_\dagger$. In showing this \emph{delay-independent nesting}, we will show that the periodic orbits of \eqref{reference-dde} produce invariant two-dimensional manifolds for an extended version of \eqref{reference-dde}. Moreover, the dynamics that the DDE \eqref{reference-dde} induces on the resulting manifold are planar integrable ordinary differential equations (ODEs) whose variables satisfy a predator-prey feedback relation. Thus, we unify two different intrinsic mechanisms for periodicity in modeling. That is, periodicity in the infinite-dimensional, delayed self-regulation DDE \eqref{reference-dde} is a two-dimensional, predator-prey ODE process if we consider $x(t)$ and $x(t - 1)$ as separate species.

Our motivation is based on the numerical approximation in Figure \ref{fig1}, where we plot the projections of the periodic orbits $P\gamma_r$ of the Hutchinson equation \eqref{hutchinson} at different delay values $r$. The resulting Jordan curves do not intersect. Moreover, in \cite{lop24, Lo23}, we have discussed the monotone delayed feedback DDE \eqref{reference-dde} under the additional \emph{symmetric feedback} assumptions
\begin{align}\label{def-sym-feedback}
    f(-u, v) = f(u, v) \quad \text{and} \quad f(u, -v) = - f(u, v).
\end{align}
Then, up to a nondegeneracy condition \cite[Theorem 1.3]{lop24}, all periodic solutions $x^\ast(t)$ of \eqref{reference-dde} yield periodic solutions $(x^\ast(t), x^\ast(t - 1))$ of the planar ODEs
\begin{align}\label{even-odd-ODEs}
    \left(
    \begin{array}{c}
        \dot u \\
        \dot v
    \end{array}
    \right)
    =
    r\left(
    \begin{array}{c}
        f(u, v) \\
        - f(v, u)
    \end{array}
    \right), \quad (u, v) \in \mathbb{R}^2
\end{align}
for a suitable value of the delay $r$. Since $r$ is a constant time scaling in the ODEs \eqref{even-odd-ODEs}, the nesting property \eqref{def-nesting} holds, even if $x^\ast(t)$ and $x^\dagger(t)$ solve the DDE \eqref{reference-dde} at different values $r_\ast \neq r_\dagger$. A key aspect of the symmetric feedback \eqref{def-sym-feedback} is that it enforces a rational minimal period $p_\ast$ on all the periodic solutions of \eqref{reference-dde}. If the period $p_\ast$ is rational, then there exists an $M\in \mathbb{N}$ such that $x^\ast(t - M) = x^\ast(t)$. Hence, the $M$-vector $u^j(t) := x^\ast(t - j)$ solves the ODEs
\begin{align}\label{def-cycl-feed}
    \dot u^j(t) = r f(u^j(t), u^{j + 1}(t)),\quad j \mod M.
\end{align}
Together with the monotone delayed feedback assumption $r\partial_2f \neq 0$, the ODEs \eqref{def-cycl-feed} are a \emph{monotone cyclic feedback} system and possess a nesting property much like \eqref{def-nesting}; see \cite{MPSm90}. Like in \eqref{even-odd-ODEs}, the delay $r$ is a time scaling in \eqref{def-cycl-feed} and the nesting property \eqref{def-nesting} holds for any two periodic solutions $x^\ast(t)$ and $x^\dagger(t)$ of the DDE \eqref{reference-dde} that possess rational periods, regardless of the delays $r_\ast$ and $r_\dagger$. In contrast, if $p_\ast$ is irrational, then the monotone cyclic feedback ODEs \eqref{def-cycl-feed} are defined in $\mathbb{R}^\mathbb{Z}$ and the results in \cite{MPSm90} do not apply.

This article is organized as follows. Section \ref{Sec2} presents the main results and ideas in Theorems \ref{thm1}--\ref{thm4}. In Section \ref{Sec3}, we show that the periodic orbits of \eqref{reference-dde} always admit a local continuation in a real parameter $b$. Section \ref{Sec4} shows that the parameter $b$ is locally equivalent to the amplitude of the periodic solutions, allowing us to globalize the continuation and prove Theorem \ref{thm1}, Theorem \ref{thm2}, and Theorem \ref{thm3}. Section \ref{Sec5} contains auxiliary lemmata used in Section \ref{Sec4} that also allow us to prove Theorem \ref{thm4} in Section \ref{Sec6}. Finally, Section \ref{Sec7} proves auxiliary results used in the local continuation of Section \ref{Sec3}.

\section{Main results}\label{Sec2}

Let $x^\ast(t)$ solve the monotone delayed feedback DDE \eqref{reference-dde} at delay value $r_\ast \neq 0$. We include the delay into the phase space by considering the \emph{extended DDE}
\begin{align}\label{extended-DDE}
    \begin{split}
        \dot x(t) &= r f(x(t),x(t-1)), \\
        \dot r(t) &= 0,
    \end{split} \quad f \in C^k(\mathbb{R}^2, \mathbb{R}), \, k \geq 2, \quad r\partial_2 f \neq 0.
\end{align}
Trivially, the periodic orbit $\gamma_\ast$ of \eqref{reference-dde} yields an orbit $(\gamma_\ast; r_\ast)$ of the extended DDE \eqref{extended-DDE} in the extended phase space $C \times \mathbb{R}$. Naturally, $(\gamma_\ast; r_\ast)$ is contained in the periodic set
\begin{align}\label{definition-set-periodic-orbits}
    \mathcal{P} := \left\{(x_t^\ast; r_\ast) : t\in \mathbb{R} \text{ and } (x^\ast(t);r_\ast) \text{ is a periodic solution of \eqref{extended-DDE}}\right\} \subset C \times \mathbb{R}.
\end{align}
We emphasize that all periodic points in $\mathcal{P}$ are nontrivial, thus, we exclude the equilibria of \eqref{extended-DDE} from $\mathcal{P}$. In analogy to the periodic orbits \eqref{def-orbit}, we consider connected components, alias \emph{periodic branches}, of $\mathcal{P}$. Moreover, we formalize the idea of ``ignoring the delay $r$'' in \eqref{def-nesting} by defining the extended projection
\begin{align}\label{ext-proj}
    \begin{array}{rcl}
        \bar{P}: C \times \mathbb{R} & \longrightarrow & \mathbb{R}^2 \\
        (\varphi; r) & \longmapsto & P\varphi.
    \end{array}
\end{align}

\begin{theorem}[Branch projection]\label{thm1}
    Let $\mathcal{B}$ be a periodic branch of the extended DDE \eqref{extended-DDE} and denote $\mathcal{O} = \bar{P}\mathcal{B}$. Then the projection $\bar{P} : \mathcal{B} \to \mathcal{O}$ is a $C^k$-diffeomorphism. 
\end{theorem}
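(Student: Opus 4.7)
The plan is to split the proof into two standard ingredients: (i) $\bar P|_{\mathcal{B}}$ is a local $C^k$-diffeomorphism at every point of $\mathcal{B}$, and (ii) $\bar P|_{\mathcal{B}}$ is globally injective. Together these imply the conclusion, and the image $\mathcal{O}$ then acquires a compatible $C^k$ manifold structure via the inverse. Step (i) relies on the local continuation of Section \ref{Sec3}, while step (ii) constitutes the substantive, delay-independent upgrade of the nesting property \eqref{def-nesting}.

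For step (i), fix $(x_t^\ast; r_\ast) \in \mathcal{B}$. Section \ref{Sec3} provides a $C^k$ local continuation $b \mapsto (x(\,\cdot\,; b), r(b), p(b))$ of the periodic solution, exhibiting a neighborhood of $(x_t^\ast; r_\ast)$ in $\mathcal{B}$ as the image of a $C^k$ chart $(b, \tau) \mapsto (x_\tau(b); r(b))$, with $x_\tau(b) \in C$ the history segment at phase $\tau$. Composing with $\bar P$ yields the planar map
\[
(b, \tau) \;\longmapsto\; \bigl(x(\tau; b),\, x(\tau - 1; b)\bigr).
\]
Its $\tau$-column is the velocity $(\dot x(\tau; b), \dot x(\tau - 1; b))$, which never vanishes because \cite{MPSe962} guarantees that $P\gamma_b$ is a $C^k$-regular Jordan curve. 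The $b$-column is transverse to the $\tau$-column because Section \ref{Sec4} identifies $b$ locally with the amplitude of the orbit, so varying $b$ genuinely deforms $P\gamma_b$ off its tangent direction. Full Jacobian rank establishes the local diffeomorphism.

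For step (ii), suppose $(x_t^\ast; r_\ast), (y_s^\dagger; r_\dagger) \in \mathcal{B}$ have the same image under $\bar P$. If $r_\ast = r_\dagger$, the shared projected point forces $P\gamma_\ast = P\gamma_\dagger$ by the nesting property \eqref{def-nesting}, and the $C^k$-regular parametrization of this Jordan curve then identifies the phases, so the two elements of $\mathcal{B}$ agree. In the critical case $r_\ast \neq r_\dagger$, I would join the two elements by a continuous arc in the connected set $\mathcal{B}$, transport the local amplitude parameter of Section \ref{Sec4} along this arc, and deduce that the projected curves $P\gamma_b$ vary strictly monotonically along the branch; two such curves at distinct amplitudes cannot share a point without a transverse crossing, which would contradict local injectivity from step (i) together with the monotone layering.

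The main obstacle is step (ii) in the case $r_\ast \neq r_\dagger$, which is precisely the delay-independent nesting advertised in the introduction. Its heart is the globalization of the local amplitude parameter $b$ from Section \ref{Sec3} to a single-valued function on the entire connected branch $\mathcal{B}$ that remains monotone in the geometric sense that distinct values produce disjoint projected Jordan curves. This globalization is the content of Sections \ref{Sec4} and \ref{Sec5}; once in hand, the diffeomorphism conclusion follows by combining it with the local picture from step (i).
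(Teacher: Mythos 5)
Your architecture (local diffeomorphism plus global injectivity) is natural, but it inverts the logical order of the paper's argument in a way that leaves both halves unsupported. The paper first proves that $\tilde{G}(t,b) = (\tilde{x}^b(t), \tilde{x}^b(t-1))$ is a $C^0$-embedding of the annulus (Lemma \ref{lemma-annulus-homeomorphism}, proved in Section \ref{Sec5}), and only then deduces nondegeneracy of the Jacobian: the propagation-of-singularities argument (Lemma \ref{lemma-propagate-zeros}) explicitly uses the homeomorphism property to conclude that any zero of $\det D\tilde{G}$ must satisfy $\nabla \det D\tilde{G} = 0$, and the local identification of $b$ with the amplitude (Corollary \ref{corollary-local-amplitude-continuation}) is itself a \emph{consequence} of Theorem \ref{thm-annulus-embedding}. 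Your step (i) therefore cannot invoke ``Section \ref{Sec4} identifies $b$ locally with the amplitude'' to get transversality of the two Jacobian columns without circularity; and even granting $\partial_b\tilde{x}^0(0)\neq 0$ (which Lemma \ref{lemma-beta-r-continuation} only supplies in the nonhyperbolic case), that gives full rank at $t=0$ only. Ruling out isolated times $t_0$ where the $b$-column becomes parallel to the velocity is exactly the content of Lemmas \ref{lemma-not-identically-zero} and \ref{lemma-propagate-zeros}, which require the linearized DDE \eqref{equation-t-beta-derivative}, the simple-zero structure from \cite{MPNu13}, and a separate treatment of rational versus irrational minimal periods. None of this is present in your sketch.

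Step (ii) in the case $r_\ast \neq r_\dagger$ has a second gap: your claim that two projected curves at distinct amplitudes ``cannot share a point without a transverse crossing'' is false as stated. Two Jordan curves can intersect in a tangency, with one contained in the closure of one complementary component of the other, and this is precisely the delicate case the paper must exclude (Steps 4--6 of the proof of Lemma \ref{lemma-annulus-homeomorphism}, Figure \ref{fig5}). Disposing of it requires the crossing/tangency dichotomy, the stability of crossings, and above all Lemma \ref{lemma-constant-product}, which shows that persistent intersections force $\tilde{p}(b)\tilde{r}(b)$ to be constant by exploiting the time rescaling symmetry \eqref{def-resc-sym} to manufacture two periodic orbits at the \emph{same} delay and then contradicting the classical nesting property \eqref{def-nesting} (and, for rational periods, the monotone cyclic feedback result of \cite{MPSm90}). ``Monotone layering'' of the curves $P\gamma_b$ is not an available hypothesis here --- it is equivalent to the disjointness you are trying to prove. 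You correctly locate where the difficulty lives, but the two mechanisms that actually resolve it --- the rescaling trick reducing delay-independent intersections to same-delay intersections, and the integral/zero-number analysis that propagates Jacobian singularities around the orbit --- are both absent.
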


That is, analogously to the Poincar\'{e}--Bendixson theorem \cite{MPSe962} for the projection \eqref{planar-projection-C}, the extended projection \eqref{ext-proj} $C^k$-embeds periodic branches of the extended DDE \eqref{extended-DDE} into $\mathbb{R}^2$. In particular, Theorem \ref{thm1} shows that if $(\gamma^1;r_1)$ and $(\gamma^2; r_2)$ are two distinct periodic orbits of the extended DDE \eqref{extended-DDE} that lie on the same branch $\mathcal{B}$, then the planar projections $P\gamma_1$ and $P\gamma_2$ are nested within one another, regardless of the delays $r_\ast$ and $r_\dagger$.
Theorem \ref{thm1} has an analogue in scalar reaction-diffusion partial differential equations (PDEs) on a circular domain. In \cite{FiRoWo04}, thanks to a Poincar\'{e}--Bendixson theorem, the PDE rotating waves are embedded in two dimensions via a projection that ignores the wave speed. Inspired by the PDE scenario, we say that $\mathcal{O}\subset \mathbb{R}^2$ in Theorem \ref{thm1} is the \emph{cyclicity component} of $\mathcal{B}$.

A major drawback of Theorem \ref{thm1} is that it regards orbits as sets, ignoring any dynamics on the cyclicity component $\mathcal{O}$. To restore the dynamics, we first define the \emph{amplitude} of a periodic solution $(x^\ast(t); r_\ast)$ of \eqref{extended-DDE} as the maximum of $x^\ast(t)$. In analogy, the \emph{amplitude domain} of a branch $\mathcal{B}$ is the set of amplitudes of all periodic solutions with points in $\mathcal{B}$. That is, the interval $(\underline{a}, \overline{a})$ with the bounds defined as
\begin{align}\label{def-amp-dom}
     \underline{a}:= \inf \left\{\max_t x(t) : (x_0; r) \in \mathcal{B} \right\} \quad \text{and} \quad \overline{a}:= \sup \left\{\max_t x(t) : (x_0; r) \in \mathcal{B} \right\}.
\end{align}

\begin{theorem}[Time-amplitude parametrization]\label{thm2}
    Let $\mathcal{B}$ be a periodic branch of the extended DDE \eqref{extended-DDE}. If we denote the amplitude domain \eqref{def-amp-dom} of $\mathcal{B}$ by $(\underline{a}, \overline{a})$, then there exist $C^k$-functions $p : (\underline{a}, \overline{a}) \to (0, \infty)$ and $r : (\underline{a}, \overline{a}) \to \mathbb{R}$, and a $C^k$-family $(x^a(t); r(a))$ of periodic solutions of \eqref{extended-DDE} such that:
    \begin{enumerate}
        \item For all $a\in (\underline{a}, \overline{a})$, $p(a)$ is the minimal period of $x^a(t)$ and
        \begin{align}
            \max_t x^a(t) = x^a(0) = a.
        \end{align}
        \item The solutions obtained in this way parametrize $\mathcal{B}$, that is,
        \begin{align}
            \mathcal{B} = \left\{(x_t^a; r(a)) : t\in \mathbb{R}, \; a\in (\underline{a}, \overline{a})\right\}.
        \end{align}
    \end{enumerate}
    Moreover, if we define $\mathbb{A}:=\{(t, a) : t \in \mathbb{R}/p(a)\mathbb{Z},\; a\in (\underline{a}, \overline{a})\}$, then the map 
    \begin{align}
        \begin{array}{rrcl}
            \beta:& \mathbb{A} & \longrightarrow & \mathcal{B} \\
            &(t, a) & \longmapsto & (x_{t}^a; r(a))
        \end{array}
    \end{align}
    is a $C^k$-diffeomorphism.
\end{theorem}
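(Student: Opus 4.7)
The strategy is to parametrize $\mathcal{B}$ by the amplitude and phase of its periodic solutions, using the local continuation announced in Sections \ref{Sec3}--\ref{Sec4} and globalizing via Theorem \ref{thm1}. Define the amplitude function $A : \mathcal{B} \to \mathbb{R}$ by $A(x_0; r) := \max_t x(t)$, whose image lies in $(\underline{a}, \overline{a})$ by \eqref{def-amp-dom}. A preliminary fact, that every periodic solution of \eqref{extended-DDE} on $\mathcal{B}$ has a unique nondegenerate global maximum per minimal period, follows from Mallet-Paret--Sell oscillation theory for monotone delayed feedback and enables the phase normalization $x^a(0) = \max_t x^a(t)$ via the implicit function theorem.

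Next, around a reference point $(x_{t_0}^\ast; r_\ast) \in \mathcal{B}$ with amplitude $a_\ast$, the local continuation produces a $C^k$-family $(x^a(t); r(a))$ with $a \in (a_\ast - \epsilon, a_\ast + \epsilon)$, together with $C^k$-functions $p(a)$ (minimal period) and $r(a)$ (delay) such that $\max_t x^a(t) = x^a(0) = a$. This yields a local $C^k$-parametrization $(t, a) \mapsto (x_t^a; r(a))$ of an open neighborhood of the reference orbit in $\mathcal{B}$.

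To globalize, I would use the connectedness of $\mathcal{B}$ together with Theorem \ref{thm1}. The latter makes $\mathcal{B}$ a connected two-dimensional $C^k$-manifold whose orbits project under $\bar{P}$ to pairwise disjoint Jordan curves in $\mathcal{O} \subset \mathbb{R}^2$; in particular the orbits form a regular $C^k$-foliation of $\mathcal{B}$, and the orbit space is a connected one-dimensional $C^k$-manifold. The amplitude function descends to this orbit space and is a local $C^k$-diffeomorphism by the preceding step, hence a global $C^k$-diffeomorphism onto a connected open interval, which by \eqref{def-amp-dom} is exactly $(\underline{a}, \overline{a})$. Consequently, distinct amplitudes correspond to distinct orbits in $\mathcal{B}$, and the local families patch into a single global $C^k$-family $(x^a(t); r(a))$ satisfying items (1)--(2).

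Finally, $\beta(t, a) = (x_t^a; r(a))$ is $C^k$ by construction, surjective by item (2), and injective because the amplitude determines the orbit and the phase normalization fixes $t$ modulo $p(a)$. Its differential has full rank: $\partial_t \beta$ is the nonzero orbit tangent and $\partial_a \beta$ is transverse by virtue of the local parametrization. Hence $\beta$ is a $C^k$-diffeomorphism. The main obstacle is the uniqueness step --- ruling out two distinct orbits in $\mathcal{B}$ with the same amplitude --- which requires the combination of the local submersion property of $A$ with the global planar structure inherited from Theorem \ref{thm1} and the nesting property \eqref{def-nesting}.
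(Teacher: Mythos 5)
Your architecture (local continuation in the amplitude, then globalization by an open--closed argument on the connected branch) is the same as the paper's, but the proposal has two genuine gaps, and the way you try to close the second one is circular. First, the local step is not merely a phase normalization at a nondegenerate maximum. The continuations of Lemma \ref{lemma-beta-hyperbolic-continuation} and Lemma \ref{lemma-beta-r-continuation} are naturally parametrized by a delay or center-manifold coordinate $b$, and the assertion that the amplitude $a(b) = \tilde{x}^b(0)$ is a regular reparametrization --- equivalently, that the branch has no folds in amplitude --- is precisely Corollary \ref{corollary-local-amplitude-continuation}, which the paper deduces from Theorem \ref{thm-annulus-embedding} via the propagation-of-singularities argument (Lemma \ref{lemma-propagate-zeros}) and Lemma \ref{lemma-not-identically-zero}. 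Nothing in your local paragraph rules out $a'(b_\ast)=0$ at some point of the branch, and if that happened the map $\beta$ would fail to be a diffeomorphism (and item (2) could fail, since one amplitude could label two orbits).

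Second, your globalization invokes Theorem \ref{thm1} as an input, but in the paper Theorems \ref{thm1} and \ref{thm2} are proved simultaneously: the diffeomorphism $\bar{P}:\mathcal{B}\to\mathcal{O}$ is \emph{obtained from} the globalized time-amplitude chart together with the global version of Theorem \ref{thm-annulus-embedding}, so using it to build that chart begs the question. Stripped of Theorem \ref{thm1}, the uniqueness step you correctly identify as ``the main obstacle'' --- two distinct orbits of $\mathcal{B}$ with the same amplitude, or more generally with intersecting planar projections --- cannot be settled by the nesting property \eqref{def-nesting}, because that property only applies to orbits at the \emph{same} delay, whereas distinct orbits on $\mathcal{B}$ generically occur at different delays $r(a_1)\neq r(a_2)$. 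Extending disjointness across delays is the actual content of Section \ref{Sec5}: the crossing/tangency dichotomy for continuous families of Jordan curves, Lemma \ref{lemma-constant-product} (constancy of $\tilde{p}\tilde{r}$ forced by persistent intersections, via the time-rescaling symmetry \eqref{def-resc-sym}), and Lemma \ref{lemma-no-degenerate-intersections}. Without supplying an argument of this type, the claim that the orbit space of $\mathcal{B}$ is a Hausdorff one-manifold on which the amplitude is globally injective remains unproved, and the proposal does not establish the theorem.
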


In other words, Theorem \ref{thm2} shows that the branches of periodic points $\mathcal{B}$ of the extended DDE \eqref{extended-DDE} are annuli $\mathbb{A}$ with a single global chart $\beta$. Since the global branch parameters are time and amplitude, we say that the representation $\beta$ of the branch $\mathcal{B}$ in Theorem \ref{thm2} is the \emph{time-amplitude parametrization} of $\mathcal{B}$. In particular, the branches of periodic points do not have turns in amplitude. Each amplitude labels a unique periodic orbit within each branch and prevents the formation of isolas of periodic orbits in the DDE \eqref{reference-dde}. Combining the branch projection in Theorem \ref{thm1} with the time-amplitude parametrization in Theorem \ref{thm2} results in the following.

\begin{theorem}[Predator-prey reduction]\label{thm3}
    Under the assumptions of Theorem \ref{thm2}, consider the time-amplitude parametrization $(x_t^a; r(a))$ of $\mathcal{B}$. Then there exists a $C^k$-function $\alpha: \mathcal{O} \to \mathbb{R}$ that satisfies 
    \begin{align}\label{def-alpha}
        \alpha\left(\bar{P}(x_t^a; r(a))\right) = a \; \quad \text{for all } (x_t^a; r(a)) \in \mathcal{B}.
    \end{align}
    Moreover, there exists a $C^k$-function $g: \mathcal{O} \to \mathbb{R}$ such that 
    \begin{align}\label{pr-pr-feedback}
        \partial_1 g(u, v) \partial_2 f(u, v) < 0  \quad \text{for all } (u, v)\in \mathcal{O},
    \end{align}
    and, for all $a \in (\underline{a}, \overline{a})$, $\bar{P}(x_t^a; r(a))$ solve the planar ODEs
    \begin{align}\label{planar-ODE}
        \left(\begin{array}{c}
            \dot u \\
            \dot v
        \end{array}\right) = r(\alpha(u,v))
        \left(\begin{array}{c}
            f(u,v) \\
            g(u,v)
        \end{array}\right), \quad (u, v) \in \mathcal{O},
    \end{align}
\end{theorem}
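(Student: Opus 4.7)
The plan is to transport the amplitude coordinate and the time-flow from the model annulus $\mathbb{A}$ to the cyclicity component $\mathcal{O}$ by composing the diffeomorphisms supplied by Theorems \ref{thm1} and \ref{thm2}. Set $F := \bar{P}\circ\beta:\mathbb{A}\to\mathcal{O}$; this is a $C^k$-diffeomorphism as a composition of $C^k$-diffeomorphisms. Write its inverse as $F^{-1}=(T,A):\mathcal{O}\to\mathbb{A}$ and define $\alpha:=A$. Because $F(t,a)=\bar{P}(x_t^a;r(a))$, property \eqref{def-alpha} is immediate, and $r\circ\alpha\in C^k(\mathcal{O},\mathbb{R})$ since both $r$ and $\alpha$ are $C^k$.

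To construct $g$, I would push the time-flow on $\mathcal{B}$ through $\bar{P}$. Using $F(t,a)=(x^a(t),x^a(t-1))$, differentiation in $t$ gives
\begin{align*}
    \partial_t F(t,a)=r(a)\bigl(f(x^a(t),x^a(t-1)),\ f(x^a(t-1),x^a(t-2))\bigr).
\end{align*}
At $(u,v)=F(t,a)$ the first component equals $r(\alpha(u,v))f(u,v)$, matching \eqref{planar-ODE}. For the second component, define
\begin{align*}
    w(u,v):=x^{A(u,v)}(T(u,v)-2),\qquad g(u,v):=f(v,w(u,v)).
\end{align*}
Joint $C^k$-regularity of $(t,a)\mapsto x^a(t-2)$ (time regularity from the bootstrap in the DDE, $a$-regularity from Theorem \ref{thm2}) together with $F^{-1}\in C^k$ and $f\in C^k$ yields $g\in C^k(\mathcal{O},\mathbb{R})$. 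By construction $(u(t),v(t)):=\bar{P}(x_t^a;r(a))$ solves \eqref{planar-ODE}.

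The predator-prey sign condition \eqref{pr-pr-feedback} is the heart of the statement. The chain rule gives $\partial_1 g(u,v)=\partial_2 f(v,w)\,\partial_u w(u,v)$, and the monotone delayed feedback hypothesis $r\partial_2 f\neq 0$ forces $\partial_2 f$ to have constant sign on $\mathbb{R}^2$, so $\partial_2 f(v,w)\,\partial_2 f(u,v)>0$ and
\begin{align*}
    \mathrm{sign}\bigl(\partial_1 g(u,v)\,\partial_2 f(u,v)\bigr)=\mathrm{sign}\bigl(\partial_u w(u,v)\bigr).
\end{align*}
Thus the assertion reduces to $\partial_u w<0$ on $\mathcal{O}$. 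I would then invert the Jacobian
\begin{align*}
    dF(t,a)=\begin{pmatrix} r(a)f(u,v) & \partial_a x^a(t) \\ r(a)f(v,w) & \partial_a x^a(t-1) \end{pmatrix}
\end{align*}
to obtain explicit formulas for $\partial_u T$ and $\partial_u A$, and substitute them into $\partial_u w=\dot x^A(T-2)\,\partial_u T+\partial_a x^a(T-2)\,\partial_u A$. The main obstacle will be to show that the resulting combination of variational shifts $\partial_a x^a(t-j)$ and nonlinearities $f(x^a(t-j),x^a(t-j-1))$ has a uniform sign. I expect this to follow from an auxiliary lemma in Section \ref{Sec5} that exploits the nesting property \eqref{def-nesting} combined with the linear variational DDE satisfied by $z(s):=\partial_a x^a(s)$, producing a sign-definite comparison between consecutive delay shifts along the periodic orbit and yielding the required inequality on all of $\mathcal{O}$.
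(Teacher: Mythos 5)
Your setup coincides with the paper's: the map $F=\bar P\circ\beta$ is the paper's $G$, your $\alpha=A$ is the paper's amplitude map, and your $g(u,v)=f(v,w(u,v))$ with $w=x^{A}(T-2)$ is exactly the paper's $g$. The reduction of \eqref{pr-pr-feedback} to $\partial_u w<0$ via $\partial_1 g=\partial_2 f(v,w)\,\partial_u w$ and the constant sign of $\partial_2 f$ is also correct. The gap is that you stop at precisely the point that carries the content of the theorem: you declare the sign of $\partial_u w$ to be "the main obstacle" and defer it to a hoped-for auxiliary lemma in Section \ref{Sec5} based on the nesting property. No such lemma is needed, and the nesting property plays no role here. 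Substituting the inverse-Jacobian formula
\begin{align*}
(\partial_u T,\partial_u A)=\frac{1}{\det\bigl(DF(\tau,\alpha)\bigr)}\bigl(\partial_a x^{\alpha}(\tau-1),\,-\dot x^{\alpha}(\tau-1)\bigr)
\end{align*}
into $\partial_u w=\dot x^{\alpha}(\tau-2)\,\partial_u T+\partial_a x^{\alpha}(\tau-2)\,\partial_u A$ gives
\begin{align*}
\partial_u w=\frac{\dot x^{\alpha}(\tau-2)\,\partial_a x^{\alpha}(\tau-1)-\dot x^{\alpha}(\tau-1)\,\partial_a x^{\alpha}(\tau-2)}{\det\bigl(DF(\tau,\alpha)\bigr)}=-\frac{\det\bigl(DF(\tau-1,\alpha)\bigr)}{\det\bigl(DF(\tau,\alpha)\bigr)},
\end{align*}
i.e.\ the "combination of variational shifts" you were worried about is nothing but the Jacobian determinant of $F$ evaluated one delay unit earlier. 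Since $F$ is a global $C^k$-diffeomorphism of the connected annulus $\mathbb{A}$ (Theorem \ref{thm-annulus-embedding}, already used for Theorems \ref{thm1} and \ref{thm2}), $\det DF$ never vanishes and therefore has constant sign, so the ratio is positive and $\partial_u w<0$ follows. This is exactly how the paper closes the argument; with that one identity supplied, your proof is complete and is essentially the paper's proof.
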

The predator-prey reduction in Theorem \ref{thm3} shows that the periodic orbits of the extended DDE \eqref{extended-DDE} foliate the cyclicity component $\mathcal{O}$ by level sets of a differentiable first integral $\alpha$. By the identity \eqref{def-alpha}, $\alpha$ extracts the amplitude of a periodic point $(x_t^a, r(a))$ using only the two-point evaluation $(x^a(t), x^a(t - 1))$. In particular, we recover the amplitude domain of $\mathcal{B}$ as the interval $\alpha(\mathcal{O})$. Moreover, the ODEs \eqref{planar-ODE} satisfy a predator-prey feedback relation \eqref{pr-pr-feedback}.

Next, we discuss the relative position of the periodic branches in the extended phase space $C \times \mathbb{R}$. We highlight that there exist different branches $\mathcal{B}$ and $\hat{\mathcal{B}}$ of the extended DDE \eqref{extended-DDE} with overlapping cyclicity components. Indeed, if $p_\ast$ is a period of $x^\ast(t)$, then, by substitution,
\begin{align}\label{def-resc-sym}
    x^\ast((1 + m p_\ast) t) \text{ is a solution of \eqref{reference-dde} at delay }(1 + mp_\ast) r_\ast \text{ for all }m \in \mathbb{Z}.
\end{align}
Moreover, since the planar projections of the copies satisfy
\begin{align}
    \left\{(x^\ast(t), x^\ast(t - 1)) : t \in \mathbb{R}\right\} = \left\{(x^\ast((1 + m p_\ast) t), x^\ast((1 + m p_\ast)(t - 1))) : t \in \mathbb{R}\right\},
\end{align}
the \emph{time rescaling symmetry} \eqref{def-resc-sym} produces infinitely many periodic branches of \eqref{extended-DDE} with identical cyclicity components. 
\begin{example}\label{example1}
    In the Hutchinson equation \eqref{hutchinson}, all periodic solutions appear by successive Hopf bifurcations from the constant solution ${N}(t) \equiv 1$ as the size of the delay $|r|$ increases. Thus, the inside boundary of all periodic branches consists of a Hopf point and the cyclicity component is the single annulus $\mathcal{O} = (0, \infty)^2 \setminus \{(1, 1)\}$ whose inner radius is zero. The explicit form of the predator-prey reduction \eqref{planar-ODE} is unknown, but we provide a numerical approximation of the integral curves with amplitudes smaller than five in Figure \ref{fig1}.
\end{example}
\begin{figure}[h]
    \centering
    \includegraphics[width = 0.87\textwidth]{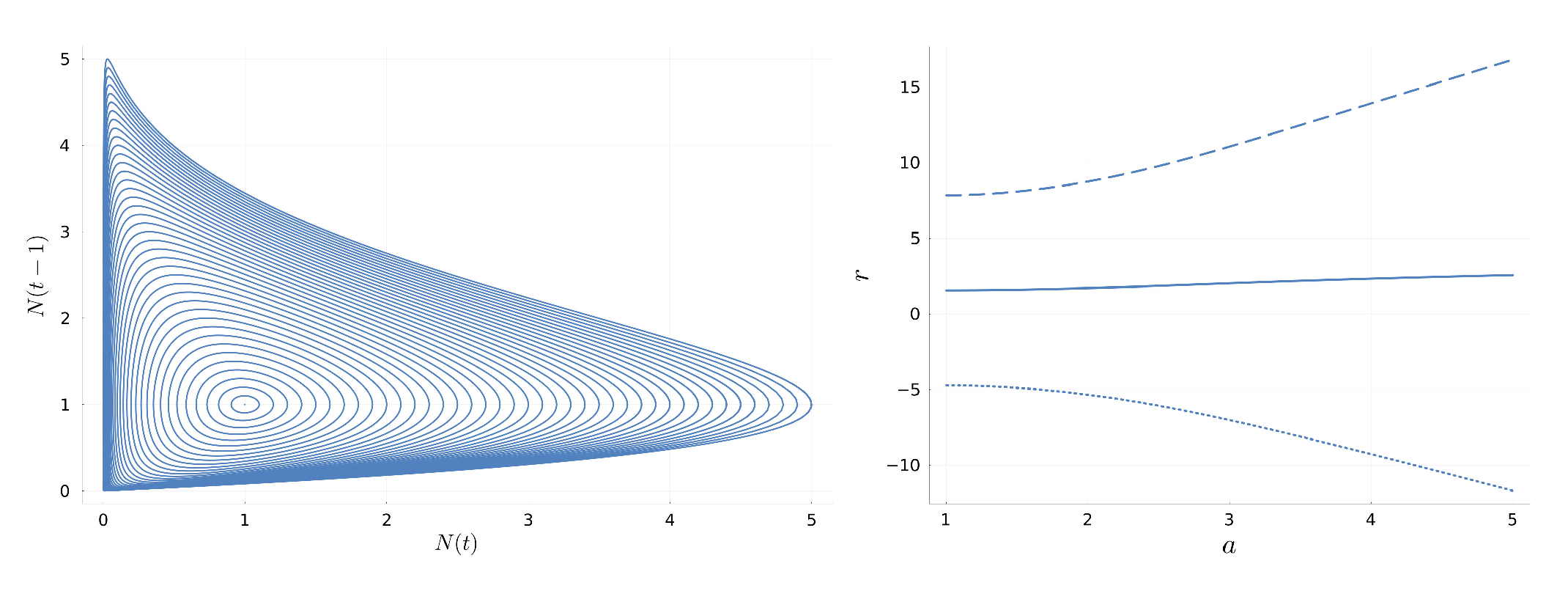}
    \caption{ \em Left:  Numerical approximation of the integral curves of the planar ODEs \eqref{planar-ODE} for the Hutchinson DDE \eqref{hutchinson}. The cyclicity component $\mathcal{O}$ consists of one annulus $(0,\infty)^2 \setminus \{(1, 1)\}$. Right: The three branches are the graphs of the delay map $r(a)$ (solid) and the time rescaling symmetries \eqref{def-resc-sym} given by $(1 + p(a))r(a)$ (dashed), and $(1 - p(a))r(a)$ (dotted). All periodic branches connect to a Hopf bifurcation at amplitude one and have the same cyclicity component. The amplitude domain $\alpha(\mathcal{O})$ of all the branches is $(1, \infty)$.}
    \label{fig1}
\end{figure}
\begin{theorem}[Delay-independent nesting]\label{thm4}
    Let $\mathcal{B}$ and $\hat{\mathcal{B}}$ be two branches of periodic points of the extended DDE \eqref{extended-DDE} and consider the corresponding maps $p, \, \hat{p}, \, r,$ and $\hat{r}$ from the time-amplitude parametrization in Theorem \ref{thm2}. If $\bar{P}\mathcal{B} \cap \bar{P}\hat{\mathcal{B}} \neq \emptyset$, then $\bar{P}\mathcal{B} = \bar{P}\hat{\mathcal{B}}$,
    \begin{align}
        \hat{r} = (1 + mp) r, \quad \text{and} \quad  \hat{p} = \frac{p}{1 + mp}, \quad \text{for some }m\in\mathbb{Z}.
    \end{align}
\end{theorem}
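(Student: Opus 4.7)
The plan is to show that $\hat{\mathcal{B}}$ is obtained from $\mathcal{B}$ by the time-rescaling symmetry \eqref{def-resc-sym} for a single integer $m$, from which both conclusions follow immediately. Using the time-amplitude parametrization of Theorem \ref{thm2}, I first build the candidate rescaled branch
\begin{align*}
    \mathcal{B}_m := \left\{\left(y^{a,m}_t; (1+mp(a))r(a)\right) : t \in \mathbb{R},\; a \in (\underline{a}, \overline{a})\right\},
\end{align*}
where $y^{a,m}(t) := x^a((1+mp(a))t)$. A direct computation shows each $y^{a,m}$ is a periodic solution of \eqref{extended-DDE} at delay $(1+mp(a))r(a)$ with minimal period $p(a)/(1+mp(a))$ and amplitude $a$, so $\mathcal{B}_m$ is a $C^k$ annulus contained in $\mathcal{P}$. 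Since the inverse rescaling by $-m$ recovers $\mathcal{B}$, any proper enlargement of $\mathcal{B}_m$ would force a proper enlargement of $\mathcal{B}$; hence $\mathcal{B}_m$ is itself a periodic branch with $\bar{P}\mathcal{B}_m = \bar{P}\mathcal{B}$.

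The central technical step is a rigidity lemma for the planar projection: if two periodic orbits $\gamma_1, \gamma_2$ of \eqref{extended-DDE} at delays $r_1, r_2$ and minimal periods $p_1, p_2$ have the same Jordan projection $P\gamma_1 = P\gamma_2 = J$, then $x_2(t) = x_1((1+kp_1)t)$ up to time shift for some $k \in \mathbb{Z}$, with $r_2 = (1+kp_1)r_1$ and $p_2 = p_1/(1+kp_1)$. Indeed, by the monotone parametrization of $J$ from the Poincar\'{e}--Bendixson theorem \cite{MPSe962}, there is a $C^1$ increasing diffeomorphism $\phi:\mathbb{R}\to\mathbb{R}$ with $x_2 = x_1 \circ \phi$. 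The delay-one constraint $x_1(\phi(t)-1) = x_1(\phi(t-1))$ together with MP-S injectivity of the parametrization forces $\phi(t) - \phi(t-1) \in 1 + p_1 \mathbb{Z}$; by continuity this integer $k$ is constant, and differentiation gives $\phi'(t) \equiv 1+kp_1$. Substituting $x_2 = x_1 \circ \phi$ into the two DDEs then yields $r_2 = (1+kp_1)r_1$.

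Now fix $(u_0, v_0) \in \bar{P}\mathcal{B} \cap \bar{P}\hat{\mathcal{B}}$, and let $\gamma \in \mathcal{B}$, $\hat{\gamma} \in \hat{\mathcal{B}}$ be the orbits through $(u_0, v_0)$ with Jordan projections $J, \hat{J}$. To apply the rigidity lemma I must first show $J = \hat{J}$. By Theorem \ref{thm3} both planar ODEs \eqref{planar-ODE} are defined on the open overlap $\mathcal{O} \cap \hat{\mathcal{O}}$ and share the first component $f$; the tangent-vector computation using $\dot v = rf(v, x(t-2))$ shows that $J$ and $\hat{J}$ have parallel tangents at $(u_0, v_0)$ precisely when the second-order values $x(t_0-2)$ and $\hat{x}(\hat{t}_0-2)$ coincide, equivalently when $g(u_0, v_0) = \hat{g}(u_0, v_0)$. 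Using the auxiliary lemmata of Section \ref{Sec5} on the construction of $g$ from the DDE together with the MP-S same-delay nesting \eqref{def-nesting} applied across all rescalings $\mathcal{B}_m$, I deduce that $\hat{\gamma}$ must realize the same second-order data at $(u_0, v_0)$ as some rescaled $y^{a_0, m}$, so $J = \hat{J}$.

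Applying the rigidity lemma to $\gamma$ and $\hat{\gamma}$ then produces an integer $m = m(u_0, v_0) \in \mathbb{Z}$ with the stated rescaling relation at amplitude $a_0 = \alpha(u_0, v_0) = \hat{\alpha}(u_0, v_0)$. This assignment is locally constant by rigidity and continuous on the connected set $\hat{\mathcal{B}}$, so it is a single integer $m$ throughout. Hence $\hat{\mathcal{B}} = \mathcal{B}_m$, yielding $\bar{P}\hat{\mathcal{B}} = \bar{P}\mathcal{B}_m = \bar{P}\mathcal{B}$ along with the global relations $\hat{r} = (1+mp)r$ and $\hat{p} = p/(1+mp)$. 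The main obstacle is establishing $J = \hat{J}$ from a single shared projection point: a priori two distinct Jordan curves could both be DDE-orbit projections through $(u_0, v_0)$ at different delays, and ruling this out requires combining the planar ODE structure from Theorem \ref{thm3} with the MP-S nesting uniformly across all admissible delays.
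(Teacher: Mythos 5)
Your overall strategy differs from the paper's: you try to identify $\hat{\mathcal{B}}$ with a rescaled copy $\mathcal{B}_m$ by a local rigidity argument at a shared projection point, whereas the paper works at the \emph{boundary} of the common amplitude range. Concretely, the paper takes $\underline{a} := \inf(\alpha(\mathcal{O}) \cap \hat{\alpha}(\hat{\mathcal{O}}))$, uses Lemma \ref{lemma-constant-product} plus Arzel\'{a}--Ascoli to show both branches limit onto the same equilibrium with $f(\underline{a},\underline{a})=0$, identifies $(\underline{a}; r(\underline{a}))$ and $(\underline{a}; \hat{r}(\underline{a}))$ as Hopf points (after a cutoff perturbation to exclude a degenerate root $\nu = 0$), reads off $\hat{r}(\underline{a}) = (1+mp(\underline{a}))r(\underline{a})$ from the characteristic equation, and concludes by uniqueness of the Hopf branch. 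Your preliminary pieces are sound: the rescaled family $\mathcal{B}_m$ is indeed a branch, and your ``rigidity lemma'' is essentially Lemma \ref{lemma-no-degenerate-intersections} of the paper, correctly stated.

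The genuine gap is the step $J = \hat{J}$, which you yourself flag as ``the main obstacle'' but do not actually prove. From a single common point $(u_0, v_0) \in \mathcal{O} \cap \hat{\mathcal{O}}$ you get one Jordan curve from each branch passing through it, and nothing prevents them from crossing transversally: the same-delay nesting \eqref{def-nesting} is unavailable because $r(\alpha(u_0,v_0))$ and $\hat{r}(\hat{\alpha}(u_0,v_0))$ are a priori unrelated, and ``applying the nesting across all rescalings $\mathcal{B}_m$'' does not help for a \emph{single} pair of orbits, since $\{(1+mp)r : m \in \mathbb{Z}\}$ and $\{(1+\hat{m}\hat{p})\hat{r} : \hat{m} \in \mathbb{Z}\}$ are discrete sets that need not meet. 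Every place the paper converts ``projections intersect'' into a rigidity statement (Lemma \ref{lemma-constant-product}, Steps 4--6 of the proof of Lemma \ref{lemma-annulus-homeomorphism}), it does so for intersections persisting over an \emph{open interval} of parameters, exploiting that the rescaled delay ranges $\tilde{r}^{(m)}(b)$ become unbounded in $m$ so as to force a same-delay coincidence; reproducing that machinery across two distinct branches is exactly the missing work, and your appeal to ``the auxiliary lemmata of Section \ref{Sec5} on the construction of $g$'' does not supply it ($g$ is built in Section \ref{Sec4}, and Section \ref{Sec5} only treats a single continuation family). The subsequent claim that $\hat{\gamma}$ ``must realize the same second-order data'' as some $y^{a_0,m}$ is asserted, not derived. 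Until $J = \hat{J}$ is established, the rigidity lemma has nothing to act on, so the proof as written does not go through; the paper's boundary/Hopf argument is precisely the device that circumvents this difficulty.
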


In particular, Theorem \ref{thm4} shows that the cyclicity components $\mathcal{O}$ of the branches $\mathcal{B}$ are disjoint, except for branches related by the time rescaling symmetry \eqref{def-resc-sym}. This motivates the definition of the \emph{cyclicity set}
\begin{align}
    \mathcal{C} := \bar{P} \mathcal{P}.
\end{align}
Naturally, $\mathcal{C}$ is the union the annular cyclicity components $\mathcal{O}$ of the branches $\mathcal{B}$. In Example \ref{example1}, the cyclicity set consists of a single component. Theorem \ref{thm4} allows us to extend the functions $\alpha$ and $g$ in Theorem \ref{thm3} to the whole cyclicity set. Thus, we obtain a complete description of the  periodic solutions of \eqref{reference-dde} via the following corollary.

\begin{corollary}\label{cor5}
    Let $\mathcal{C}$ be the cyclicity set of the extended DDE \eqref{extended-DDE}. There exist $C^k$-functions $\alpha, g: \mathcal{C} \to \mathbb{R}$ such that $\partial_1 g(u,v) \partial_2 f(u, v) < 0$ for all $(u, v) \in \mathcal{C}$ and, if $x(t)$ is a periodic solution of \eqref{reference-dde} with amplitude $a$ at delay $r$, then $\alpha(x(t), x(t -1)) = a$ for all $t \in \mathbb{R}$ and the vector $(x(t), x(t - 1)) \in \mathbb{R}^2$ satisfies
    \begin{align}
        \begin{split}
            \dot x(t) = rf(x(t), x(t - 1)),\\
            \dot x(t - 1) = r g(x(t), x(t - 1)),
        \end{split} \quad \text{for all }t\in \mathbb{R}.
    \end{align}
    Conversely, $\alpha$ is a first integral of the planar ODEs
    \begin{align}\label{def-planar-odes}
        \left(\begin{array}{c}
            \dot u\\
            \dot v
        \end{array}\right) = \left(\begin{array}{c}
            f(u, v) \\
            g(u, v)
        \end{array}\right), \quad (u, v) \in \mathcal{C}.
    \end{align}
    Moreover, there exist $C^k$-functions $p, r: \alpha(\mathcal{C}) \to (0, \infty)$ such that all solutions $(u(t), v(t))$ of \eqref{def-planar-odes} have minimal period 
    \begin{align}
        p(\alpha(u(t),v(t)))r(\alpha(u(t),v(t)))
    \end{align}
    and satisfy
    \begin{align}
        v(t) = u(t - r(\alpha(u(t),v(t)))), \quad \text{for all }t\in \mathbb{R}.
    \end{align}
\end{corollary}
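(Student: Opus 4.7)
The plan is to globalize Theorem \ref{thm3} branch-by-branch, with Theorem \ref{thm4} supplying the required compatibility across branches. By definition, $\mathcal{C} = \bar P \mathcal{P}$ is the union of the cyclicity components $\mathcal{O}_{\mathcal{B}} := \bar P \mathcal{B}$ over all periodic branches $\mathcal{B}$ of the extended DDE \eqref{extended-DDE}. Theorem \ref{thm4} forces any two cyclicity components to be either identical or disjoint, so I will decompose $\mathcal{C}$ into pairwise disjoint annuli $\{\mathcal{O}_i\}_i$. On each $\mathcal{O}_i$ I choose a representative branch $\mathcal{B}$ and define $\alpha := \alpha_{\mathcal{B}}$ and $g := g_{\mathcal{B}}$ via the functions supplied by Theorem \ref{thm3}.

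The technical heart is showing $\alpha$ and $g$ do not depend on this choice. Given a second branch $\hat{\mathcal{B}}$ with $\bar P \hat{\mathcal{B}} = \mathcal{O}_i$, Theorem \ref{thm4} gives $\hat r = (1 + mp) r$ and $\hat p = p/(1+mp)$ for some integer $m$, with associated solutions $\hat x^a(t) = x^a((1 + m p(a)) t)$ after matching amplitudes. Since time rescaling preserves the maximum of a periodic function, $\alpha_{\hat{\mathcal{B}}} = \alpha_{\mathcal{B}}$ is immediate. For $g$, recall that $g_{\mathcal{B}}(u, v) = f(v, w(u, v))$, where $w$ expresses the doubly-delayed value $x^a(t - 2)$ as a function of $(u, v) = (x^a(t), x^a(t - 1))$ through the injective projection furnished by Theorem \ref{thm1}. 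Using $p(a)$-periodicity of $x^a$ I compute $\hat x^a(t - 1) = x^a(\tau - 1)$ and $\hat x^a(t - 2) = x^a(\tau - 2)$ at $\tau := (1 + m p(a)) t$, which shows the same function $w$ arises for $\hat{\mathcal{B}}$ and hence $g_{\hat{\mathcal{B}}} = g_{\mathcal{B}}$.

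The direct statements now transfer from Theorem \ref{thm3}: the sign condition $\partial_1 g \cdot \partial_2 f < 0$ and the amplitude identity $\alpha(x(t), x(t-1)) = a$ hold componentwise, and $\dot x(t - 1) = r f(x(t - 1), x(t - 2)) = r g(x(t), x(t-1))$ is immediate from the construction of $g$. Conversely, any trajectory of $(\dot u, \dot v) = (f, g)$ is a time reparametrization of a $\bar P$-projected DDE solution, so $\alpha$ is automatically a first integral. For the period and delay maps, the product $p_{\mathcal{B}}(a)\, r_{\mathcal{B}}(a)$ is invariant under the time-rescaling symmetry, so $T(a) := p_{\mathcal{B}}(a)\, r_{\mathcal{B}}(a)$ is an intrinsic $C^k$ minimal period for the unscaled planar ODE. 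Writing all delays in the symmetry family as $r_{\mathcal{B}} + kT$, $k \in \mathbb{Z}$, I select the unique representative in $(0, T)$ and set $r := $ this representative and $p := T/r$; the identity $v(t) = u(t - r(\alpha(u,v)))$ then drops out by comparing the unscaled planar ODE solution with $x^a(t - 1) = v$ after rescaling time by $1/r_{\mathcal{B}}(a)$. The main obstacle I expect is the well-definedness of $g$ under time rescaling, where the periodicity shifts must align exactly; a secondary obstacle is verifying $C^k$-smoothness of the canonical representative $r(a) \in (0, T(a))$ as $a$ traverses $\alpha(\mathcal{C})$, which relies on $r_{\mathcal{B}}(a)$ and $T(a)$ being smooth and the selected branch staying away from the boundary values $0$ and $T$.
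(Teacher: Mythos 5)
Your proposal is correct and takes essentially the same route as the paper: Theorem \ref{thm4} splits $\mathcal{C}$ into pairwise disjoint cyclicity components, one representative branch is chosen on each, and Theorems \ref{thm2} and \ref{thm3} then supply $\alpha$, $g$, $r$, and $p$ componentwise. The additional effort you spend on branch-independence of $g$ and on selecting a canonical delay representative in $(0,T)$ is not required, since the corollary only asserts existence and Remark \ref{remark1} explicitly concedes that $r(a)$ and $p(a)$ are non-unique, with only their product $p(a)r(a)$ determined by the minimal period of \eqref{def-planar-odes}.
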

\begin{proof}
    By Theorem \ref{thm4}, we can pick one periodic branch associated to each connected component of $\mathcal{C}$ and use Theorem \ref{thm2} and Theorem \ref{thm3} to define the maps $\alpha$, $g$, $r$, and $p$ on each cyclicity component.  
\end{proof}
\begin{remark}\label{remark1}
    By the rescaling symmetry \eqref{def-resc-sym}, the \emph{delay map} $r(a)$ and the \emph{period map} $p(a)$ in Corollary \ref{cor5} are nonunique. However, their product is unique by uniqueness of the minimal period of \eqref{def-planar-odes}
\end{remark}

\begin{example}\label{example2}
Recall the symmetric feedback example \eqref{def-sym-feedback}. Following \cite{lop24}, the cyclicity set is 
\begin{align}
    \mathcal{C} = \mathbb{R}^2 \setminus \{(0, 0)\},
\end{align}
and, explicitly, $g(u, v) = -f(v, u)$. In general, we do not have closed formulas for the delay map $r(a)$ and period map $p(a)$ in Corollary \ref{cor5}, but the symmetric feedback \eqref{def-sym-feedback} implies a constant ratio
\begin{align}
    \frac{p(a)}{r(a)} \in \mathbb{Q}, \quad \text{for all }a\in (0, \infty).
\end{align}
Consider the special case of the enharmonic oscillator \cite{Lo23}, that is, the DDE \eqref{reference-dde} of the form
\begin{align}\label{enharmODE}
    \dot x(t) = -r\Omega\left(x(t)^2 + x(t - 1)^2\right)x(t - 1),
\end{align}
where $\Omega: \mathbb{R} \to \mathbb{R}$ is a positive frequency function. Then, one possible choice for the maps in Corollary \ref{cor5} is
\begin{align}
    \alpha(u, v) = \sqrt{u^2 + v^2}, \quad p(a) = \frac{2\pi}{\Omega(a^2)}, \quad r(a) = \frac{\pi}{2\Omega(a^2)}.
\end{align}

\end{example}

\begin{example}\label{example3}
Beyond the symmetric feedback \eqref{def-sym-feedback}, consider the DDE \eqref{reference-dde} with the nonlinearity
\begin{align}\label{QRT-nonlin}
    f(u, v) =  - \partial_v H(u, v),
\end{align}
where $H$ is the biquadric function
\begin{align}\label{def-Hamiltonian}
    H(u, v) = u^2 v^2 +(u^2 v + u v^ 2) + \frac{1}{2} (u^2 + v^2).
\end{align}
The Hamiltonian \eqref{def-Hamiltonian} produces the ODEs
\begin{align}\label{def-Hamode}
    \left(
    \begin{array}{c}
        \dot u \\
        \dot v
    \end{array}
    \right)
    =
    r\left(
    \begin{array}{c}
        - \partial_v H(u, v) \\
        \partial_u H(u, v)
    \end{array}
    \right), \quad (u, v) \in \mathbb{R}^2,
\end{align}
where the phase portrait is of \eqref{def-Hamode} is of double-well or Duffing-type. That is, it consists of periodic solutions except for the level set $H = 0$, consisting of two equilibria at $(-1, -1)$ and $(0, 0)$ and a double homoclinic figure-eight at $H = 1/16$; see Figure \ref{fig2}. The double-well \eqref{def-Hamiltonian} is an instance of the five-parameter family
\begin{align}\label{def-QRT-Ham}
    H_1 u^2 v^2 + H_2 (u^2 v  + u v^2) + H_3 (u^2 + v^2) + H_4 u v + H_5 (u + v), \quad H_j \in \mathbb{R}.
\end{align}
The Hamiltonians \eqref{def-QRT-Ham} are crucial to the so-called QRT maps, special rational transformations of $\mathbb{R}^2$ that preserve the level sets of \eqref{def-QRT-Ham}; see \cite{Duis10}. The key features of \eqref{def-QRT-Ham} are that $H$ is invariant under the reflection $(u, v) \mapsto (v, u)$ and the Galois switch $(u^+, v) \mapsto (u^-, v)$ that exchanges the two different values $u^+$ and $u^-$ such that
\begin{align}
    H(u^+, v) = H(u^-, v).
\end{align}
Explicitly, the double-well Hamiltonian ODEs \eqref{def-Hamode} are equivariant under the QRT map 
\begin{align}\label{def-exp-QRT-map}
    (u, v) \mapsto \left(v, -u - \frac{2v^2}{2v^2 + 2v + 1}\right).
\end{align}
Moreover, all orbits of \eqref{def-Hamode} are invariant, as sets, under \eqref{def-exp-QRT-map}. Hence, \eqref{def-exp-QRT-map} is a time-map for \eqref{def-Hamode} and some of the elements in Corollary \ref{cor5} are
\begin{align}
    \mathcal{C} = \mathbb{R}^2 \setminus \left(H^{-1}\left(\frac{1}{16}\right) \cup H^{-1}(0)\right) \quad \text{and} \quad g(u, v) = \partial_1 H(u, v).
\end{align}
Notice that the Hamiltonian $H$ is preserved by \eqref{def-Hamode} and is locally equivalent to $\alpha$. However, none of $\alpha$, $r(a)$, or $p(a)$ is known explicitly; see Figure \ref{fig2} for a numerical approximation.
\end{example}

\begin{figure}[t]
    \centering
    \includegraphics[width = 0.87\textwidth]{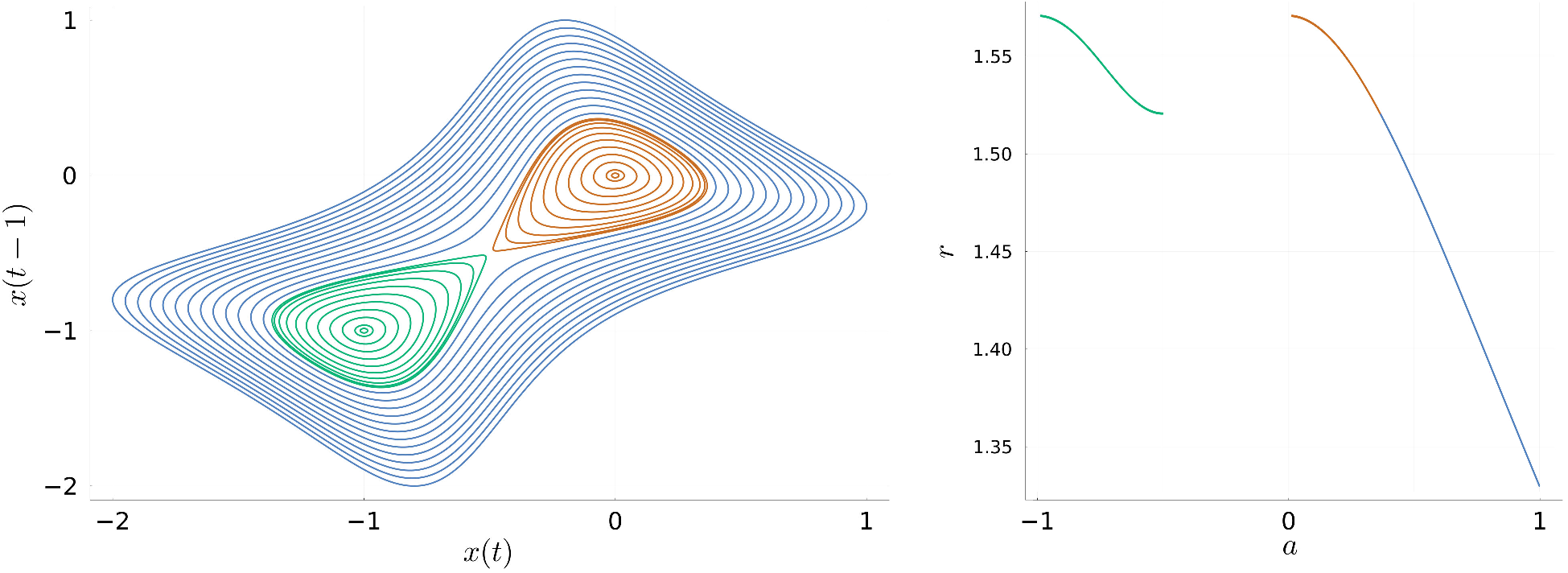}
    \caption{\em Left: Numerical approximation of the integral curves of three branches of periodic solutions of the DDE \eqref{reference-dde} with the QRT nonlinearity \eqref{QRT-nonlin}. The cyclicity set $\mathcal{C}$ consists of three connected components in blue, green, and orange separated by a homoclinic figure-eight. Right: The delay map $r(a)$ for the three branches shows that the periodic solutions appear by Hopf bifurcation at amplitudes $-1$ and $0$, and by a homoclinic bifurcation from the figure-eight at amplitudes $-1/2$ and $(\sqrt{3} - 1)/2$. Hence the delay map $r(a)$ and period map $p(a)$ have the domain $(-1 , -1/2) \cup (0, (\sqrt{3} - 1)/2) \cup ((\sqrt{3} - 1)/2, \infty)$. Since the period map approaches infinity at the figure-eight, the time rescaled branches $(1 + mp(a))r(a)$, with $m \neq 0$, become unbounded at the amplitudes $-1/2$ and $(\sqrt{3} - 1)/2$.}
    \label{fig2}
\end{figure}
\paragraph{Conclusion and discussion}
In Theorem \ref{thm1}, Theorem \ref{thm2}, and Theorem \ref{thm3}, we have shown that the periodic branches of the extended DDE \eqref{extended-DDE} are annuli diffeomorphic to their cyclicity component via the extended projection \eqref{ext-proj}. Each amplitude corresponds to a unique periodic orbit on each branch and the shape of the branch is given by the delay map $r(a)$ from the time-amplitude parametrization in Theorem \ref{thm2}. Moreover, the components $x(t)$ and $x(t - 1)$ satisfy integrable ODEs with predator-prey feedback. For example, in the Hutchinson equation \eqref{hutchinson}, the younger and older individuals $N(t)$ and $N(t - 1)$ behave like distinct species where $N(t)$ is the prey and $N(t - 1)$ is the predator. 

In Theorem \ref{thm4}, we have shown that the cyclicity components of the periodic branches are disjoint except for branch copies appearing by the time rescaling symmetry \eqref{def-resc-sym}. Thus, each cyclicity component is associated to a unique family of periodic branches generated by the delay map $r(a)$ and the period map $p(a)$ obtained in Theorem \ref{thm2}. In particular, this allows us to combine our results into Corollary \ref{cor5}. Thus we show that there exist integrable predator-prey ODEs with domain a planar cyclicity set $\mathcal{C}$ that generate all periodic solutions of the DDE \eqref{reference-dde}. We highlight six further consequences of our results.

\begin{enumerate}
    \item Our novel tools completely explain the bifurcation structure of periodic solutions in the scalar DDE \eqref{reference-dde} as the delay $r$ changes. From our viewpoint, new periodic solutions can appear in \eqref{reference-dde} in two ways: in the interior of a periodic branch or at the boundary. On the one hand, following \cite{MPNu13}, the only candidates in the interior are saddle-node bifurcations where a nonhyperbolic periodic orbits splits in two. This is the situation at nondegenerate critical values of the delay map $r(a)$ in Corollary \ref{cor5}. On the other hand, the effects at the boundary of a periodic branch are richer and can be local, like the Hopf bifurcation in Figure \ref{fig1}, as well as global, like the homoclinic bifurcation in Figure \ref{fig2}. 

    \item We highlight that at most one of the branch copies obtained by the time rescaling symmetry \eqref{def-resc-sym} possesses homoclinic orbits at the boundary. We call \emph{slow branch} to the unique choice whose period map $p(a)$ in Remark \ref{remark1} satisfies $p > 2$. The slow branch contains the \emph{slowly oscillating periodic solutions} (SOPS), that is, the periodic solutions of the DDE \eqref{reference-dde} whose extrema are separated by at least one time unit; see also Lemma \ref{lemma-relative-zero-position} in Section \ref{Sec7}. In particular, SOPS are the only periodic solutions of \eqref{reference-dde} whose minimal periods can become unbounded. Thus, only SOPS can accumulate to homoclinic orbits; see Example \ref{example3}. Furthermore, due to an eigenvalue structure \cite{MPNu13}, SOPS are the only periodic solutions of \eqref{reference-dde} that can be exponentially attracting. 

    \item If we rescale time in the slow branch via \eqref{def-resc-sym}, the delay map $(1 + mp(a))r(a)$ of the remaining branch copies becomes unbounded as the minimal period $p(a)$ grows to infinity at amplitudes corresponding to homoclinic orbits. This can be seen in Example \ref{example3}: as the delay $|r|$ grows to infinity, the QRT equation \eqref{QRT-nonlin} possesses periodic solutions that spend increasingly large amounts of time close to an equilibrium and quickly spike, reaching the homoclinic amplitude before relaxing back to equilibrium. Such temporally localized states are sometimes called \emph{temporal dissipative solitons}; see \cite{Yanchuk2019}. 
    
    \item The unstable dimension of the periodic orbits correlates to the sign of the derivative $r'(a)$. As we mentioned above, the local stability on a periodic branch only changes at saddle-node bifurcations that correspond to nondegenerate critical values of the delay map $r(a)$. Thus, the spectral structure in \cite{MPNu13} ensures that the sign changes of $r'(a)$ indicate if the unstable dimension of the periodic orbits increases or decreases by one, up to the determination of an orientation. In \cite{lop24}, we have computed the unstable dimension of the periodic orbits in the enharmonic oscillator of Example \ref{example2}, relative to $r(a)$. However, our proof relies on a reduced characteristic equation that follows from the symmetric feedback assumption \eqref{def-sym-feedback}.
    \item Theorem \ref{thm4} removes the need for a nondegeneracy assumption that we required in \cite{lop24, Lo23}. In particular, we have shown that all the periodic solutions $x^\ast(t)$ of the DDE \eqref{reference-dde} with symmetric feedback \eqref{def-sym-feedback} satisfy the odd symmetry
    \begin{align}
        x^\ast(t - 2) = - x^\ast(t).
    \end{align}
    \item We have characterized the time-periodic traveling waves in lattice differential equations of the form 
    \begin{align}\label{def-lat}
        \dot u^j(t) = r f(u^j(t), u^j(t)), \quad j \in \mathbb{Z}.
    \end{align}
    By Theorem \ref{thm1}, the time-periodic traveling waves of \eqref{def-lat} sit on a two-dimensional manifold. Moreover, if we impose the space-periodicity $u^j(t) \equiv u^{j + M}(t)$ for $M\in \mathbb{N}$, then the time-periodic traveling waves of the lattice differential equation \eqref{def-lat} yield traveling waves of the monotone cyclic feedback ODEs \eqref{def-cycl-feed}.
\end{enumerate}

\paragraph{Further investigations}
\begin{enumerate}
    \item[(i)] Our results show that the richest source of complexity in the DDE \eqref{reference-dde} are the boundaries of the cyclicity set $\mathcal{C}$. So far, we have not addressed which regions of $\mathbb{R}^2$ are realizable as cyclicity sets of \eqref{reference-dde}. A first attempt was made in \cite{vas17} by describing the nesting combinatorics of periodic solutions in terms of parenthetical expressions. However, a finer description is required to understand the possible global bifurcation phenomena in \eqref{reference-dde} precisely. 

    \item[(ii)] Having a better understanding of the delay map $r(a)$ is crucial because it determines the shape and stability of the branches of periodic orbits. We emphasize that having an explicit formula for the delay and period maps is generally unfeasible, even in ODEs. However, in the enharmonic oscillator \eqref{enharmODE} of Example \ref{example2}, the relative position, in terms of amplitudes and cyclicity components, of the periodic solutions of \eqref{enharmODE} provides sufficient information to reconstruct the connection graph of \eqref{reference-dde}; see \cite{Lo23}. We expect to obtain an analogous characterization to the \emph{period map signature} for reaction-diffusion PDEs developed in \cite{rocha07}.
    
    \item[(iii)] We hint at a deeper link between the monotone delayed feedback DDE \eqref{reference-dde} and reaction-diffusion PDEs on the circle. Notice that, as a by-product of the predator-prey reduction in Theorem \ref{thm3}, we can recast the planar ODEs \eqref{planar-ODE} into the second-order ODE
    \begin{align}\label{sec-ord}
        \ddot u = \tilde{f}(u, \dot u) := \partial_1 f(u, v) \dot u + \partial_2 f(u, v) g(u, v).
    \end{align}
    The periodic orbits of \eqref{sec-ord} are the rotating wave solutions of translation-equivariant reaction-diffusion PDEs on the circle. Following \cite{FiRoWo04}, the period map of \eqref{sec-ord} encodes the connection graph of the original PDE. We expect that there exists a DDE equivalent, but we lack a common framework enveloping both DDE and PDE settings. 
\end{enumerate}

\section{Local continuation}\label{Sec3}
We regard the DDE \eqref{reference-dde} as the generator of an evolution process on the Banach space $C$ consisting of $C^0([-1,0], \mathbb{R})$ equipped with the supremum norm \cite{HaLu93}. The DDE \eqref{reference-dde} possesses a solution \emph{semiflow}, that is, there exists a $C^k$-map $S: \mathbb{R} \times C \times \mathbb{R} \to C$ such that for any solution $x(t)$, $t\geq -1$, of \eqref{reference-dde} at delay $r$, we have
\begin{align}
    S(t, x_0; r)(\vartheta) := x_t(\vartheta) = x(t + \vartheta).
\end{align}
The value $x_0$ is the \emph{initial condition} of the \emph{orbit}
\begin{align}
    \gamma := \{x_t : t \geq 0\}.
\end{align}
In the remainder of the section, $\gamma_{\ast}$ denotes the orbit of a periodic solution ${x}^\ast(t)$ of \eqref{reference-dde} at delay $r_\ast$ and  ${p_\ast} > 0$ denotes the minimal period. A crucial property of ${x}^\ast(t)$ is that it is a \emph{simple oscillation}. More precisely, ${x}^\ast(t)$ attains its maximum and minimum once over a minimal period, and is monotone otherwise; see \cite[Theorem 7.1]{MPSe962}. In contrast to the amplitude of ${x}^\ast(t)$, the minimum $\min_{t} {x}^\ast(t)$ is called the \emph{depth}. Without loss of generality, we assume that ${x}^\ast(t)$ is a \emph{normalized periodic solution}, that is, that ${x}^\ast(0)$ is a maximum. The first time $q \in (0, {p_\ast})$ such that ${x}^\ast(q)$ is a minimum is called \emph{time of depth} of ${x}^\ast(t)$.

\begin{lemma}\label{lemma-basic-periodic}
    Let ${x}^\ast(t)$ be a normalized periodic solution of the DDE \eqref{reference-dde} at delay $r_\ast \neq 0$ with minimal period ${p_\ast}$. Then, there exists an $n \in \mathbb{N}$ such ${p_\ast} \in {J}_n$ where
    \begin{align}\label{In-definition}
            \begin{split}
                {J}_n:= \left(\frac{2}{n}, \frac{2}{n - 1}\right).
            \end{split}
    \end{align}
    Moreover, if $q \in (0, {p_\ast})$ is the time of depth of ${x}^\ast(t)$, then we have that
    \begin{align}\label{product-property}
        \ddot{x}^\ast (0) \ddot{x}^\ast(q) < 0, \quad \dot {x}^\ast(1)\dot {x}^\ast(-1) < 0, \quad \text{and} \quad \dot {x}^\ast(q + 1) \dot {x}^\ast(q - 1) < 0.
    \end{align}
\end{lemma}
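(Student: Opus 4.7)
The plan is to leverage the simple-oscillation structure of $x^\ast$ together with the regularity of the planar projection $P\gamma_\ast$ as a $C^k$-embedded Jordan curve in $\mathbb{R}^2$. First I would invoke \cite[Theorem 7.1]{MPSe962} to conclude that $x^\ast$ is a simple oscillation: on $[0, p_\ast)$ it attains its maximum uniquely at $t = 0$ and its minimum uniquely at $t = q$, and is strictly monotone on each of $(0, q)$ and $(q, p_\ast)$. Hence $\dot x^\ast(t) = 0$ if and only if $t \in \{0, q\} + p_\ast \mathbb{Z}$. By \cite[Lemma 5.7]{MPSe962}, the projection $P\gamma_\ast$ is a $C^k$-regular Jordan curve; in particular, its tangent vector $(\dot x^\ast(t), \dot x^\ast(t - 1))$ never vanishes. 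Evaluating this at $t \in \{0, q, 1, q + 1\}$ will show that $\dot x^\ast(\pm 1)$ and $\dot x^\ast(q \pm 1)$ are all nonzero.

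Next, I would establish the three inequalities of \eqref{product-property}. Differentiating the DDE \eqref{reference-dde} and evaluating at $t = 0, q$ yields
\begin{align*}
\ddot x^\ast(0) = r \partial_2 f(x^\ast(0), x^\ast(-1)) \dot x^\ast(-1), \quad \ddot x^\ast(q) = r \partial_2 f(x^\ast(q), x^\ast(q-1)) \dot x^\ast(q-1).
\end{align*}
Since $r \partial_2 f$ is continuous and nowhere zero, it has constant sign on the connected planar orbit, so the first inequality reduces to $\dot x^\ast(-1) \dot x^\ast(q - 1) < 0$. For this and the remaining two, I would identify $Px^\ast_0$, $Px^\ast_1$, $Px^\ast_q$, $Px^\ast_{q+1}$ as the unique rightmost, topmost, leftmost, and bottommost points of $P\gamma_\ast$, with tangents alternately vertical and horizontal. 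Between consecutive critical times the tangent rotates by $\pm\pi/2$ (one component changes sign while the other does not), and since the total rotation over one period is $\pm 2\pi$ by the winding number of a Jordan curve, all four such rotations must share a common sign. Tracking the two possible orientations separately will give, in both cases, $\dot x^\ast(-1) \dot x^\ast(q - 1) < 0$, $\dot x^\ast(1) \dot x^\ast(-1) < 0$, and $\dot x^\ast(q+1) \dot x^\ast(q-1) < 0$, delivering \eqref{product-property}.

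Finally, to establish $p_\ast \in J_n$, I would rule out $p_\ast = 2/n$ for each $n \in \mathbb{N}$. If $n = 2k$ is even, then $k p_\ast = 1$ forces $x^\ast(t - 1) = x^\ast(t)$, reducing the DDE to the scalar ODE $\dot x^\ast = r f(x^\ast, x^\ast)$, which admits no nonconstant periodic solutions---contradiction. If $n$ is odd, then $n p_\ast = 2$ gives $\dot x^\ast(t + 2) = \dot x^\ast(t)$, so $\dot x^\ast(1) = \dot x^\ast(-1)$; combined with $\dot x^\ast(1) \dot x^\ast(-1) < 0$ from the previous step, this forces $\dot x^\ast(1)^2 < 0$, again impossible. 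Hence $p_\ast \in J_n$ for the unique $n$ with $n - 1 < 2/p_\ast < n$. I expect the main obstacle will be the orientation argument of the second paragraph: namely, verifying carefully that the monotone rotation of the tangent at the four cardinal points delivers all three products with negative sign, independently of which of the two possible orientations is realized on $P\gamma_\ast$.
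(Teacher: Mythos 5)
Your argument is correct, but it reaches \eqref{product-property} by a genuinely different route than the paper. The paper first gets $\ddot{x}^\ast(0)<0<\ddot{x}^\ast(q)$ directly from the max/min structure plus the simplicity of the zeros of $\dot{x}^\ast$ (cited from \cite[Theorem 5.1]{MPNu13}), and then obtains $\dot{x}^\ast(1)\dot{x}^\ast(-1)<0$ by locating the topmost point $(x^\ast(1),x^\ast(0))$ relative to the nullcline $f^{-1}(0)$ and running a short sign computation through the DDE. You instead run a Hopf Umlaufsatz argument on the tangent $(\dot{x}^\ast(t),\dot{x}^\ast(t-1))$ of the projected Jordan curve: four transversal axis crossings per period, total turning $\pm 2\pi$, hence four quarter-turns of a common sign. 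This is more self-contained (it replaces the nullcline argument and, via the nonvanishing tangent, even the appeal to \cite{MPNu13} for nondegeneracy of the extrema), and your exclusion of $p_\ast = 2/n$ for odd $n$ via $\dot{x}^\ast(1)=\dot{x}^\ast(-1)$ contradicting $\dot{x}^\ast(1)\dot{x}^\ast(-1)<0$ is a nice replacement for the paper's citation of \cite{ChMP78} for the two-dimensional cyclic system.

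The one step you flag as the main obstacle is indeed where the care is needed, and here is why it closes. The quarter-turn count alone gives $\operatorname{sign}\dot{x}^\ast(q-1)=-\operatorname{sign}\dot{x}^\ast(-1)$ unconditionally (the leftmost point is two quarter-turns after the rightmost one), but for the other two products you must know \emph{which} horizontal-tangent point (topmost, at $t\equiv 1$, or bottommost, at $t\equiv q+1$) is visited first after $t=0$; choosing the wrong pairing flips the sign of $\dot{x}^\ast(1)\dot{x}^\ast(-1)$. This pairing is not free, but it is forced by the same quadrant confinement: on the arc from the rightmost point to the next critical time the second component $\dot{x}^\ast(t-1)$ keeps the sign $\operatorname{sign}\dot{x}^\ast(-1)$, so $x^\ast(t-1)$ is monotone there and the arc can only terminate at the topmost point if the curve is positively oriented ($\dot{x}^\ast(-1)>0$) and at the bottommost point if negatively oriented. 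With that observation both orientations yield all three inequalities in \eqref{product-property}, so your proof is complete. Note also that your use of $\dot{x}^\ast(1)\dot{x}^\ast(-1)<0$ to rule out period $2$ is not circular, since the turning argument nowhere assumes $p_\ast\notin\{2/n\}$; the degenerate coincidences of critical times are already excluded by the nonvanishing of the tangent.
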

\begin{proof}
    First, it is well known that the minimal period belongs to one of the intervals $J_n$ in \eqref{In-definition} because neither $1$ nor $2$ can be a period of ${x}^\ast(t)$. The reason is that neither
    \begin{align}
        \dot u(t) = r f(u(t), u(t)) \quad \text{nor} \quad \left(\begin{array}{c}
            \dot u^1(t) \\
            \dot u^2(t)
        \end{array}\right) = r\left(\begin{array}{c}
            f(u^1(t), u^2(t))  \\
            f(u^2(t), u^1(t))
        \end{array}\right),
    \end{align}
    possess periodic solutions; see \cite{ChMP78}.
    Second, we show the inequalities \eqref{product-property}. By \cite[Theorem 5.1]{MPNu13}, all zeros of $\dot {x}^\ast(t)$ are simple. Hence, 
    \begin{align}
        \begin{split}
            \ddot {x}^\ast(0) &= r_\ast\partial_2 f({x}^\ast(0), {x}^\ast(-1)) \dot {x}^\ast(-1) < 0 \quad \text{and} \\
            \ddot {x}^\ast(q) &= r_\ast\partial_2 f({x}^\ast(q), {x}^\ast(q - 1)) \dot {x}^\ast(q - 1) > 0.
        \end{split}
    \end{align}
    Next, we show that $\dot {x}^\ast(1) \dot {x}^\ast(- 1) < 0$, the situation at the time of depth $q$ is analogous. By \cite[Theorem 2.1]{MPSe962}, the set 
    \begin{align}
        P\gamma_{\ast} = \left\{\left({x}^\ast(t), {x}^\ast(t - 1)\right) : t\in \mathbb{R}\right\} \subset \mathbb{R}^2
    \end{align}
    is a Jordan curve. Moreover, since ${x}^\ast(0)$ is a maximum, the point $({x}^\ast(1), {x}^\ast(0))$ is the maximum in the vertical abscissa of $P\gamma_{\ast}$. Since ${x}^\ast(t)$ is a simple oscillation, the point $({x}^\ast(1), {x}^\ast(0))$ lies above the nullcline $f^{-1}(0) \subset \mathbb{R}^2$. Since the sign of $\partial_2 f$ is fixed, this implies
    \begin{align}
        f({x}^\ast(1), {x}^\ast(0)) \partial_2 f{x}^\ast(0), {x}^\ast(-1) > 0.
    \end{align}
    In particular, since $\ddot x(0) < 0$, we obtain
    \begin{align}
        \begin{split}
            \mathrm{sign}(\dot {x}^\ast(1)) &= -\mathrm{sign}(\dot {x}^\ast(1) \ddot x(0)) \\
            &= -\mathrm{sign}(\dot {x}^\ast(1) r_\ast\partial_2 f({x}^\ast(0), {x}^\ast(-1)) \dot {x}^\ast(-1)) \\
            &= -\mathrm{sign}\left(r_\ast^2 f({x}^\ast(1), {x}^\ast(0)) \partial_2 f({x}^\ast(0), {x}^\ast(-1))\dot {x}^\ast(-1)\right) \\
            &= -\mathrm{sign}(\dot {x}^\ast(-1)),
        \end{split}
    \end{align}
    and \eqref{product-property} follows.
\end{proof}

\begin{remark}
    The intervals ${J}_n$ in \eqref{In-definition} are indexed so that $n$ coincides with the so-called zero number of the respective periodic solution, as defined in \cite{MPSe961}. In particular, the parity of $n$ determines the sign of the delay value $r_\ast$ at which $x^\ast(t)$ solves the monotone delayed feedback DDE \eqref{reference-dde}. If \eqref{reference-dde} has a periodic solution with minimal period ${p_\ast}\in {J}_n$ for $n$ odd, then $r_\ast \partial_2 f$ is negative. Analogously, if $n$ is even, then $r_\ast \partial_2 f$ is positive. 
\end{remark}

By Floquet theory \cite{HaLu93}, the spectrum of the \emph{monodromy operator} 
\begin{align}
    L := \partial_2 S({p_\ast}, {x}^\ast_0; r_\ast),
\end{align}
determines the local stability of $\gamma_{\ast}$. Furthermore, $L$ is the time-${p_\ast}$ solution operator of the DDE initial value problem
\begin{align}
    \begin{split}\label{linearized-equation-periodic-orbit}
        \dot y(t) &= A(t)y(t) + B(t)y(t-1),\\
                y_0 &= \psi,
    \end{split}
\end{align}
with the coefficients
\begin{align}\label{linear-coefficients}
    A(t):=r_\ast\partial_1 f({x}^\ast(t),{x}^\ast(t-1))\quad \text{and}\quad B(t) := r_\ast \partial_2 f({x}^\ast(t),{x}^\ast(t-1)).
\end{align}
The spectrum of $L$ consists of countably many eigenvalues that accumulate to $0$. If $L$ has an eigenvalue $1$ with simple algebraic multiplicity, then the periodic orbit $\gamma_\ast$ is called \emph{hyperbolic}. Otherwise, $L$ has an algebraically double eigenvalue $1$ and we say that $\gamma_\ast$ is \emph{nonhyperbolic}. Proposition \ref{proposition-spectrum} discusses both situations in detail.

\begin{proposition}\label{proposition-spectrum}
    Let ${x}^\ast(t)$ be a normalized periodic solution of the DDE \eqref{reference-dde} at delay $r_\ast \neq 0$ with minimal period $p_\ast$. Then, the monodromy operator $L$ possesses an eigenvalue $\mu_\mathrm{c} > 0$ such that any eigenfunction $\Psi_0 \in \ker(\mu_\mathrm{c} \mathrm{Id} - L)^2$ possesses two zeros on the interval $[0, p_\ast)$. Furthermore, the real generalized eigenspace 
    \begin{align}
        E^\mathrm{c} := \operatorname{Re}\left(\bigoplus_{j \geq 0}\ker\left(\mathrm{Id} - L\right)^j\right) +  \operatorname{Re}\left(\bigoplus_{j \geq 0}\ker\left(\mu_\mathrm{c}\mathrm{Id} - L\right)^j\right)
    \end{align}
    is two-dimensional and $C$ admits an $L$-invariant splitting \begin{align}\label{splitting}
        C = E^\mathrm{c} \oplus R^\mathrm{c},
    \end{align} 
    such that the spectrum of the restriction $L:R^\mathrm{c} \to R^\mathrm{c}$ is disjoint from the annulus
    \begin{align}\label{annulus-equation}
        \left\{z \in\mathbb{C} : \min\{1, \mu_\mathrm{c}\}<|z|<\max\{1, \mu_\mathrm{c}\}\right\} \subset \mathbb{C}.
    \end{align}
    If $\mu_\mathrm{c} = 1$, then the orbit $\gamma_{\ast}$ of ${x}^\ast(t)$ is nonhyperbolic and
    \begin{align}
        E^\mathrm{c} = \operatorname{ker}(\operatorname{Id}-L)^2.
    \end{align}
    Otherwise, $\mu_\mathrm{c} \neq 1$, $\gamma_{\ast}$ is hyperbolic, and 
    \begin{align}
        \dim \operatorname{ker}(\operatorname{Id}-L) = \dim \operatorname{ker}(\mu_\mathrm{c}\operatorname{Id} - L) = 1.
    \end{align}
\end{proposition}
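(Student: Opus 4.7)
The plan is to combine the Floquet spectral theory for monotone cyclic feedback monodromy operators from \cite{MPNu13} with the identification of $1$ as a Floquet multiplier coming from time-translation along $\gamma_\ast$. Differentiating the DDE \eqref{reference-dde} in $t$ (permissible since $f \in C^k$, $k \geq 2$) shows that $y(t) := \dot{x}^\ast(t)$ solves the linearization \eqref{linearized-equation-periodic-orbit} with coefficients \eqref{linear-coefficients}; periodicity of $x^\ast$ yields $L \dot{x}^\ast_0 = \dot{x}^\ast_0$, so $1 \in \sigma(L)$. By Lemma \ref{lemma-basic-periodic} together with the sign conditions \eqref{product-property}, $\dot{x}^\ast$ has exactly two simple zeros on $[0, p_\ast)$, at $t = 0$ and $t = q$. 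In the language of \cite[Theorem 5.1]{MPNu13}, the eigenfunction $\dot{x}^\ast_0$ then lies at the principal level of the discrete Lyapunov function indexing $\sigma(L)$.

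Next, I would invoke the MPNu spectral stratification. Its key outputs are: at the principal level there are at most two Floquet multipliers counted with algebraic multiplicity, and a Krein--Rutman argument on the cone of zero-count-$2$ functions (which contains $\dot{x}^\ast_0$) forces these multipliers to be real and positive; every Floquet multiplier outside the principal level has modulus strictly smaller than each principal multiplier. Consequently, the principal spectral set is $\{1, \mu_c\}$ for a unique $\mu_c > 0$ (with $\mu_c = 1$ allowed), its generalized eigenspace $E^c$ is algebraically two-dimensional, and every generalized eigenfunction of $\mu_c$ has zero count $2$. The modulus bound places $\sigma(L) \setminus \{1, \mu_c\}$ inside the open disk of radius $\min\{1, \mu_c\}$, hence disjoint from the open annulus \eqref{annulus-equation}.

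The Riesz projection attached to the clopen spectral set $\{1, \mu_c\}$ is well-defined because $L$ is quasi-compact (its essential spectrum reduces to $\{0\}$ by eventual compactness of the DDE solution semiflow \cite{HaLu93}), and it produces the $L$-invariant splitting $C = E^c \oplus R^c$ with the stated spectral localisation of $L|_{R^c}$. The hyperbolic/nonhyperbolic dichotomy then reduces to bookkeeping: if $\mu_c \neq 1$, the eigenvalues $1$ and $\mu_c$ are distinct and each has algebraic, hence geometric, multiplicity $1$, so $\gamma_\ast$ is hyperbolic and $\dim \ker(I - L) = \dim \ker(\mu_c I - L) = 1$; if $\mu_c = 1$, the eigenvalue $1$ has algebraic multiplicity $2$ and $E^c = \ker(I - L)^2$, so $\gamma_\ast$ is nonhyperbolic.

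The main obstacle is invoking the MPNu stratification correctly: the at-most-two algebraic multiplicity at the principal level, the positivity of the principal multipliers, and the strict modulus separation between levels all rest on the discrete Lyapunov function together with Krein--Rutman arguments on cones of prescribed zero count, and I would quote them directly from \cite[Section 5]{MPNu13} rather than reconstruct them. A secondary technical point is that $p_\ast$ may be smaller than the delay $1$, in which case $L$ itself need not be compact; one then passes to a large iterate $L^N$ with $N \geq 1/p_\ast$ before forming the Riesz projection.
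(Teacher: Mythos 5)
Your route is the same as the paper's: the proof in the text is essentially a citation of \cite[Theorem 5.1]{MPNu13} for the existence and properties of $\mu_\mathrm{c}$, plus standard Floquet/Riesz-projection theory from \cite{HaLu93} for the splitting $C = E^\mathrm{c}\oplus R^\mathrm{c}$. Your observations that $L\dot x^\ast_0 = \dot x^\ast_0$, that $\dot x^\ast$ has exactly two simple zeros per minimal period, and that one may need to pass to an iterate $L^N$ before forming the Riesz projection when $p_\ast < 1$ are all consistent with (and slightly more careful than) what the paper writes.

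One step, however, misquotes the cited spectral theory. You assert that ``every Floquet multiplier outside the principal level has modulus strictly smaller than each principal multiplier,'' and conclude that $\sigma(L)\setminus\{1,\mu_\mathrm{c}\}$ lies in the open disk of radius $\min\{1,\mu_\mathrm{c}\}$. That is false in general. The Mallet-Paret--Nussbaum stratification orders the spectrum by the discrete Lyapunov function (sign changes per delay interval): levels with \emph{smaller} zero count carry multipliers of \emph{larger} modulus. The pair $\{1,\mu_\mathrm{c}\}$ sits at the level of $\dot x^\ast_0$, whose zero count is the integer $n$ with $p_\ast\in J_n$; for $n\geq 3$ (rapidly oscillating periodic solutions) there are nonempty levels below $n$ carrying multipliers of modulus strictly larger than $\max\{1,\mu_\mathrm{c}\}$ --- indeed such orbits are unstable. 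What the theorem actually gives is a two-sided spectral \emph{gap} around the level of $\{1,\mu_\mathrm{c}\}$: the remaining spectrum avoids the annulus \eqref{annulus-equation} but may lie on either side of it. Since the proposition only claims disjointness from the annulus, your final conclusion survives, but the intermediate claim should be corrected; as written it would, for instance, wrongly imply that every periodic orbit of \eqref{reference-dde} with $\mu_\mathrm{c}<1$ is linearly stable.
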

\begin{proof}
    The eigenvalue $\mu_\mathrm{c}$ exists by \cite[Theorem 5.1]{MPNu13}. The subspace $E^\mathrm{c}$ is spanned by the real eigenfunctions associated to the eigenvalue pair $\{1, \mu_\mathrm{c}\}$. By standard theory \cite{HaLu93}, the $L$-invariant subspace $R^\mathrm{c}$ is the range $\operatorname{ran} (\mathrm{Id} - L)^2$ if $\mu_\mathrm{c} = 1$ and $\operatorname{ran}(\mathrm{Id} - L)\cap \operatorname{ran} (\mu_\mathrm{c}\mathrm{Id} - L)$ otherwise.
\end{proof}

Since the eigenvalue $\mu_\mathrm{c}$ in Proposition \ref{proposition-spectrum} characterizes the spectral gap \eqref{annulus-equation} of $L$, we say that it is the \emph{critical eigenvalue} of $\gamma_\ast$. In particular, the critical eigenvalue determines how we can perform a local continuation of $\gamma_\ast$ according to the following lemmata.

\begin{lemma}[Hyperbolic continuation]\label{lemma-beta-hyperbolic-continuation}
    Let ${x}^\ast(t)$ be a normalized hyperbolic periodic solution of the DDE \eqref{reference-dde} at the delay $r_\ast \neq 0$. Then there exist $\varepsilon > 0$, a $C^k$-map $\tilde{p}: (-\varepsilon, \varepsilon) \to (0, \infty)$ and a $C^k$-family of functions $\tilde{x}^{b} \in C^k(\mathbb{R}, \mathbb{R})$ such that the following hold:
    \begin{enumerate}
        \item At ${b} = 0$, we have that
        \begin{align}
            \tilde{p}(0) = p_\ast \quad \text{and} \quad \tilde{x}^0(t) = {x}^\ast(t), \quad \text{for all } t \in \mathbb{R}.
        \end{align}
        \item The functions $\tilde{x}^{b}(t)$ have minimal period $\tilde{p}({b})$, satisfy $\max_t \tilde{x}^{b}(t) = \tilde{x}^{b}(0)$, and solve
        \begin{align}
        \begin{split}
            \dot{\tilde{x}}^{b}(t) &= (r_\ast + {b}) f(\tilde{x}^{b}(t), \tilde{x}^{b}(t - 1)),\quad \text{for all } (t, {b}) \in \mathbb{R} \times (-\varepsilon, \varepsilon).
        \end{split}
        \end{align}
    \end{enumerate}
\end{lemma}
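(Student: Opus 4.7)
The plan is to apply the implicit function theorem to a combined period--phase functional. Define
\[
F : (0, \infty) \times C \times \mathbb{R} \longrightarrow C \times \mathbb{R}, \qquad F(p, \varphi, b) := \bigl( S(p, \varphi; r_\ast + b) - \varphi,\; f(\varphi(0), \varphi(-1)) \bigr).
\]
The base point $(p_\ast, x_0^\ast, 0)$ is a zero of $F$: the first slot vanishes because $x^\ast$ has period $p_\ast$, and the second vanishes because $x^\ast(0)$ is a maximum, forcing $\dot x^\ast(0) = r_\ast f(x^\ast(0), x^\ast(-1)) = 0$. Once $F^{-1}(0)$ is parametrized by $b$, the first slot will encode $\tilde p(b)$-periodicity of the continuation $\tilde x^b$ and the second will recover the normalization $\dot{\tilde x}^b(0) = 0$.

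The crux is to show that $D_{p,\varphi} F$ at $(p_\ast, x_0^\ast, 0)$ is a topological isomorphism. On a direction $(P, \Psi) \in \mathbb{R} \times C$ it reads
\[
\bigl( P\, \dot x_0^\ast + (L - \mathrm{Id})\Psi,\;\; \partial_1 f \cdot \Psi(0) + \partial_2 f \cdot \Psi(-1) \bigr),
\]
with partials of $f$ evaluated at $(x^\ast(0), x^\ast(-1))$. By Proposition \ref{proposition-spectrum}, hyperbolicity gives $\ker(L - \mathrm{Id}) = \mathrm{span}(\dot x_0^\ast)$ with simple algebraic multiplicity; the standard spectral theory of the DDE monodromy makes $L - \mathrm{Id}$ Fredholm of index zero, and the simplicity of the eigenvalue $1$ forces $\dot x_0^\ast \notin \mathrm{im}(L - \mathrm{Id})$ (otherwise a preimage would produce a generalized eigenvector outside the kernel). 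Hence the scalar parameter $P$ exactly fills the one-dimensional cokernel, making the first slot surjective with kernel $\{0\} \times \mathrm{span}(\dot x_0^\ast)$. Evaluating the phase functional on $\Psi = \dot x_0^\ast$ yields
\[
\partial_1 f \cdot \dot x^\ast(0) + \partial_2 f \cdot \dot x^\ast(-1) = \frac{d}{dt} f(x^\ast(t), x^\ast(t-1))\Big|_{t=0} = \ddot x^\ast(0)/r_\ast,
\]
which is nonzero by Lemma \ref{lemma-basic-periodic}. Therefore $D_{p,\varphi}F$ is invertible, and the implicit function theorem produces $C^k$-smooth maps $\tilde p(b)$ and $\varphi(b)$ on a neighborhood $(-\varepsilon, \varepsilon)$ of $0$ with $F(\tilde p(b), \varphi(b), b) \equiv 0$.

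Define $\tilde x^b(t)$ as the solution of \eqref{reference-dde} at delay $r_\ast + b$ with initial segment $\varphi(b)$. The first slot of $F = 0$ gives $\tilde p(b)$-periodicity, and $C^k$-regularity of $(t, b) \mapsto \tilde x^b(t)$ follows from $C^k$-smoothness of the semiflow $S$ together with the standard bootstrap that a periodic solution of a $C^k$-DDE is $C^{k+1}$ on all of $\mathbb{R}$. Two items remain. For minimality of $\tilde p(b)$, the $C^k$-convergence $\tilde x^b \to x^\ast$ makes the minimal period $\pi(b)$ of $\tilde x^b$ continuous with $\pi(0) = p_\ast$; since $\tilde p(b) = k(b)\pi(b)$ for a positive integer $k(b)$ and both sides tend to $p_\ast$, we get $k(b) = 1$ for small $|b|$. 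For $t = 0$ being the global maximum of $\tilde x^b$, the phase equation yields $\dot{\tilde x}^b(0) = 0$, while continuity together with $\ddot x^\ast(0) < 0$ from Lemma \ref{lemma-basic-periodic} keeps $\ddot{\tilde x}^b(0) < 0$, so $0$ remains a strict local maximum; the simple-oscillation property of \cite[Theorem 7.1]{MPSe962} promotes this to the global maximum over a period.

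The main obstacle is the isomorphism step: identifying the cokernel of $L - \mathrm{Id}$ and matching it against both the parameter $P$ and the phase functional depends crucially on the hyperbolicity of $\gamma_\ast$ from Proposition \ref{proposition-spectrum} and the nondegeneracy $\ddot x^\ast(0) \neq 0$ from Lemma \ref{lemma-basic-periodic}. The argument deliberately does not extend to the nonhyperbolic case $\mu_\mathrm{c} = 1$, in which $E^\mathrm{c}$ is two-dimensional and a further parameter (presumably the amplitude) must enter the continuation, as will be needed for Theorem \ref{thm2}.
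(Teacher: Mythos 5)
Your proof is correct, and it reaches the conclusion by a route that is packaged differently from the paper's, although the essential ingredients coincide. The paper works dynamically: it takes the set $\Sigma=\{\varphi: f(\varphi(0),\varphi(-1))=0\}$ as a Poincar\'{e} section, constructs the return time $\mathcal{T}$ and the return map $\mathcal{S}$ by one application of the implicit function theorem, and then continues the fixed point of $\mathcal{S}$ in $b$ using that $Q_\Sigma L-\mathrm{Id}$ is invertible on $T_{x_0^\ast}\Sigma$ by hyperbolicity. You instead solve the periodicity equation directly, treating the period $p$ as an explicit unknown and imposing the paper's section condition as a scalar phase equation, so that a single implicit function theorem on $F(p,\varphi,b)$ does all the work; the Fredholm/cokernel bookkeeping for $L-\mathrm{Id}$ that you carry out (simple algebraic multiplicity $\Rightarrow\dot x_0^\ast\notin\operatorname{ran}(L-\mathrm{Id})$, so $P$ fills the cokernel and the phase functional kills the kernel direction via $\ddot x^\ast(0)/r_\ast\neq 0$) is the linear-algebra shadow of the paper's splitting $C=\ker(\mathrm{d}\Sigma)\oplus\operatorname{span}_{\mathbb{R}}\{\dot x_0^\ast\}$. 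Two points where your write-up is actually more careful than the paper's: the section $\Sigma$ contains both the maximum and the minimum of the orbit, so ``$\max_t\tilde x^b(t)=\tilde x^b(0)$ follows by construction'' really does need your continuity argument $\ddot{\tilde x}^b(0)<0$ plus the simple-oscillation property; and minimality of $\tilde p(b)$ needs the divisor argument you give (your claim that the minimal period $\pi(b)$ tends to $p_\ast$ deserves one more line — a uniform positive lower bound on $\pi(b)$, e.g.\ from the uniform bound on $\dot{\tilde x}^b$ and the nonvanishing oscillation amplitude, after which any subsequential limit of $\pi(b)$ is a positive period of $x^\ast$ not exceeding $p_\ast$, hence equals $p_\ast$). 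Both proofs share the same unaddressed technicality that differentiating $S$ in its time argument at $t=p_\ast<1$ requires a $C^1$ initial segment, which is harmless here because the base point is a periodic (hence smooth) orbit.
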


\begin{lemma}[Nonhyperbolic continuation]\label{lemma-beta-r-continuation}
    Let ${x}^\ast(t)$ be a normalized nonhyperbolic periodic solution of the DDE \eqref{reference-dde} at the delay $r_\ast \neq 0$. Then there exist $\varepsilon > 0$, $C^k$-maps $\tilde{p}: (-\varepsilon, \varepsilon) \to (0, \infty)$, $\tilde{r}:(-\varepsilon, \varepsilon) \to \mathbb{R}$, and a $C^k$-family of functions $\tilde{x}^{b} \in C^k(\mathbb{R}, \mathbb{R})$ such that the following hold:
    \begin{enumerate}
        \item At ${b} = 0$, we have that
        \begin{align}
            \tilde{p}(0) = p_\ast,\quad \tilde{r}(0) = r_\ast, \quad \tilde{r}'(0) = 0, \quad \partial_{b} \tilde{x}^0(0) \neq 0, \quad\text{and} \quad \tilde{x}^0(t) = {x}^\ast(t),
        \end{align}
        for all $t \in \mathbb{R}$.
        \item The functions $\tilde{x}^{b}(t)$ have minimal period $\tilde{p}({b})$, satisfy $\max_t \tilde{x}^{b}(t) = \tilde{x}^{b}(0)$, and solve
        \begin{align}
        \begin{split}
            \dot{\tilde{x}}^{b}(t) &= \tilde{r}({b}) f(\tilde{x}^{b}(t), \tilde{x}^{b}(t - 1)),\quad \text{for all } (t, {b}) \in \mathbb{R} \times (-\varepsilon, \varepsilon).
        \end{split}
        \end{align}
    \end{enumerate}
\end{lemma}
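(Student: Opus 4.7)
The plan is to run a Lyapunov--Schmidt reduction on the fixed-point equation of a local Poincar\'{e} map, with the delay $r$ as bifurcation parameter, and to read the fold signature $\tilde{r}(0)=r_\ast$, $\tilde{r}'(0)=0$ off the Jordan-block structure at $\mu_\mathrm{c}=1$ supplied by Proposition \ref{proposition-spectrum}.

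First I would introduce the codimension-one cross-section $\Sigma := \{\varphi \in C : \dot{\varphi}(0) = 0\}$, which is transverse to $\dot{x}^\ast_0$ thanks to $\ddot{x}^\ast(0) \neq 0$ from Lemma \ref{lemma-basic-periodic}. The Poincar\'{e} map $\Pi_r : \Sigma \to \Sigma$ has a fixed point at $x^\ast_0$ for $r=r_\ast$, and the continuation problem becomes $G(\varphi, r) := \Pi_r(\varphi) - \varphi = 0$ near $(x^\ast_0, r_\ast)$. By Proposition \ref{proposition-spectrum} with $\mu_\mathrm{c} = 1$, the monodromy $L$ carries a Jordan chain $(I-L)\psi_1 = \psi_0 := \dot{x}^\ast_0$; after projecting onto $\Sigma$ along $\dot{x}^\ast_0$, the linearisation $D_\varphi G(x^\ast_0, r_\ast)$ is Fredholm of index zero with one-dimensional kernel spanned by $\xi_0 := \psi_1 - c\dot{x}^\ast_0$ (with $c$ chosen so that $\dot{\xi}_0(0) = 0$) and one-dimensional cokernel spanned by a dual adjoint eigenfunction~$\eta$.

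Next, writing $\varphi = x^\ast_0 + s\xi_0 + w$ with $w$ in a topological complement of $\mathrm{span}(\xi_0)$, I would solve the range component of $G=0$ for $w = w(s,r)$ by the implicit function theorem, leaving the scalar bifurcation equation
\begin{align*}
\phi(s, r) := \langle \eta,\, G(x^\ast_0 + s\xi_0 + w(s, r), r)\rangle = 0.
\end{align*}
Since $\xi_0 \in \ker D_\varphi G$, a routine LS identity yields $\partial_s \phi(0, r_\ast) = 0$ automatically. The crux is to verify $\partial_r\phi(0,r_\ast) \neq 0$: differentiating the semiflow in $r$ and using that $x^\ast$ solves \eqref{reference-dde}, one identifies $\partial_r G(x^\ast_0, r_\ast)$ with the section-projected image of $f(x^\ast(\cdot), x^\ast(\cdot-1)) = \dot{x}^\ast/r_\ast$ pushed by the variational equation; the pairing with $\eta$ is nonzero precisely because the algebraic multiplicity of $\mu_\mathrm{c}=1$ is exactly two, so that $\partial_r G$ cannot be absorbed into $\mathrm{range}(L-I)|_\Sigma$ without raising the multiplicity further. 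The implicit function theorem applied to $\phi=0$ then produces a smooth curve $r = \tilde{r}(s)$ with $\tilde{r}(0) = r_\ast$ and $\tilde{r}'(0) = -\partial_s\phi/\partial_r\phi = 0$. Setting $b := s$, defining $\tilde{p}(b)$ as the minimal period of the continued orbit and $\tilde{x}^b(t)$ as the corresponding normalised periodic solution, the amplitude derivative reduces (after using $\partial_s w(0, r_\ast) = 0$ from the LS construction) to $\partial_b \tilde{x}^0(0) = \xi_0(0) = \psi_1(0)$, which is nonzero because $\dot{x}^\ast(0) = 0$ makes $\psi_1(0)$ an invariant of the Jordan chain and its vanishing would collapse the amplitude direction on $\Sigma$, contradicting the simple-maximum condition of Lemma \ref{lemma-basic-periodic}.

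The main obstacle is verifying the saddle-node transversality $\partial_r\phi(0, r_\ast) \neq 0$: this is the only step that genuinely exploits the DDE structure of \eqref{extended-DDE} rather than abstract fold theory, and requires a careful pairing between the adjoint Jordan eigenfunction $\eta$ and the $r$-derivative of the time-$p_\ast$ semiflow. The remaining pieces -- setting up $\Sigma$, identifying $\xi_0$ and $\eta$ via Proposition \ref{proposition-spectrum}, performing the Lyapunov--Schmidt reduction, and checking the amplitude nondegeneracy -- are standard once this transversality is in place.
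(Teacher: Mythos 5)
Your architecture---transverse section, reduction to a scalar bifurcation equation, fold signature read off the Jordan block at $\mu_{\mathrm{c}}=1$---matches the paper's (which performs a center-manifold reduction of the extended Poincar\'{e} map rather than Lyapunov--Schmidt; that difference is inessential). The genuine gap is the step you yourself flag as the crux: the transversality $\partial_r\phi(0,r_\ast)\neq 0$. Your justification, that the pairing is nonzero ``because the algebraic multiplicity of $\mu_{\mathrm{c}}=1$ is exactly two, so that $\partial_r G$ cannot be absorbed into $\mathrm{range}(L-I)|_\Sigma$ without raising the multiplicity further,'' is a non sequitur. The multiplicity statement of Proposition \ref{proposition-spectrum} concerns $L$ alone, whereas $\partial_r G$ is the inhomogeneous term $\varrho_{p_\ast}=\partial_3 S(p_\ast,x_0^\ast;r_\ast)$ solving \eqref{IVP-rho}; its lying in $\operatorname{ran}(L-\mathrm{Id})+\operatorname{span}_{\mathbb{R}}\{\dot x_0^\ast\}$ would not change the spectrum of $L$ at all. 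What it would do is raise the algebraic multiplicity of the eigenvalue $1$ of the \emph{extended} operator $\mathcal{L}$ in \eqref{def-ext-mon}, and nothing established up to this point excludes that. Ruling it out is precisely the content of Lemma \ref{lemma-projections}, i.e.\ $P_\Psi\varrho_{p_\ast}\neq 0$, whose proof occupies all of Section \ref{Sec7}: it needs the formal adjoint eigenfunction $\Psi^{\mathsf{T}}$, the invariance of the bilinear form \eqref{bilinear-form}, the zero-placement estimates of Lemma \ref{lemma-relative-zero-position}, and a sign argument on $X(t)=\int_0^t\Psi^{\mathsf{T}}(s)\ddot x^\ast(s)\,\mathrm{d}s$ split by the parity of $n$ with $p_\ast\in J_n$. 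This is the mathematical heart of the lemma, not ``a careful pairing'' to be filled in later, and it is absent from your proposal.

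Two further points. Your argument for $\partial_b\tilde x^0(0)=\psi_1(0)\neq 0$ (``its vanishing would collapse the amplitude direction, contradicting the simple-maximum condition'') is also not a proof: the paper derives this from the simplicity of the zeros of Floquet eigenfunctions \cite[Theorem 5.1]{MPNu13}---if $\Psi_0(0)=0$ one constructs an eigenfunction $\dot x_0^\ast+\kappa\Psi_0$ with a double zero at $0$---and not from the simple-oscillation property of $x^\ast$ itself, which gives no direct control on the generalized eigenfunction. Finally, the section $\{\varphi\in C:\dot\varphi(0)=0\}$ is not a well-defined hypersurface of $C=C^0([-1,0],\mathbb{R})$, since $\varphi\mapsto\dot\varphi(0)$ is not continuous there; the paper's choice $\Sigma=\{\varphi: f(\varphi(0),\varphi(-1))=0\}$ in \eqref{poincare-section} is the correct surrogate, coinciding with your condition along solutions because $r_\ast\neq 0$.
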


\begin{proof}[Proof of Lemma \ref{lemma-beta-hyperbolic-continuation}] 
    Let us consider the extended DDE \eqref{extended-DDE} and notice that the solution semiflow of \eqref{extended-DDE} with initial condition $(\varphi; r) \in C \times \mathbb{R}$ is $(S(t, \varphi; r); r)$. Next, we choose a section transverse to the extended orbit $(\gamma_{\ast}; r_\ast)$ at $({x}^\ast_0; r_\ast)$. Since we assumed that that the solution ${x}^\ast(t)$ is normalized, ${x}^\ast(0)$ is a maximum and we choose $(\Sigma; r_\ast)  \subset C\times \mathbb{R}$, where
    \begin{align} \label{poincare-section}
        \Sigma := \{\varphi \in C : f \circ P(\varphi) = 0\}.
    \end{align}
    By Lemma \ref{lemma-basic-periodic}, we have $\ddot {x}^\ast(0) \neq 0$ and we can define the normalized differential
    \begin{align}
        \begin{split}
            \, \mathrm{d} {\Sigma}(\varphi) :=& \frac{r_\ast}{\ddot {x}^\ast(0)}(\nabla f({x}^\ast(0), {x}^\ast(-1)) \cdot P \varphi)\\
            =& \frac{r_\ast}{\ddot {x}^\ast(0)}\left(\partial_1 f({x}^\ast(0), {x}^\ast(-1))\varphi(0) + \partial_2 f({x}^\ast(0), {x}^\ast(-1))\varphi(-1)\right),
        \end{split}
    \end{align}
    so that the tangent space of $\Sigma$ at ${x}^\ast_0$ satisfies
    \begin{align}
        T_{{x}^\ast_0} \Sigma = \mathrm{ker}\left(\, \mathrm{d} {\Sigma}\right).
    \end{align}
    By construction, we have that $\, \mathrm{d} {\Sigma}(\dot {x}^\ast_0) = 1$ and $\ker (\, \mathrm{d} {\Sigma}) + \operatorname{span}_{\mathbb{R}}\{\dot {x}^\ast_0\} = C$ which ensures that $(\Sigma; r_\ast)$ is transverse to $(\gamma_{\ast}; r_\ast)$ at $({x}^\ast_0; r_\ast)$. Hence, using the implicit function theorem, we define the Poincar\'{e} time, that is, the first return time to $\Sigma$ as the local function $\mathcal{T}: C \times \mathbb{R} \to \mathbb{R}$ that solves 
    \begin{align}\label{poincare-time}
        (f \circ P)\left(S(\mathcal{T}(\varphi; r), \varphi; r)\right) = 0\quad \text{and} \quad \mathcal{T}({x}^\ast_0; r_\ast) = {p_\ast}.
    \end{align}
    The Poincar\'{e} map is $\mathcal{S} : \Sigma \times \mathbb{R} \to \Sigma \times \mathbb{R}$ given by
    \begin{align}\label{poincare-maps}
        \mathcal{S}(\varphi; r) := (S(\mathcal{T}(\varphi; r), \varphi; r); r).
    \end{align}
    The domain of definition of $\mathcal{S}$ in \eqref{poincare-maps} coincides with that of the Poincar\'{e} time \eqref{poincare-time}. If we denote the projection onto $T_{{x}^\ast_0}\Sigma$ along $\dot {x}^\ast_0$ by
    \begin{align}
        \begin{array}{rcl}\label{sigma-projection}
            Q_{\Sigma} : C & \longmapsto & T_{{x}^\ast_0}\Sigma \\
            \varphi & \longmapsto & \varphi - \, \mathrm{d} {\Sigma}(\varphi)\dot {x}^\ast_0,
        \end{array}
    \end{align}
    then the Fr\'{e}chet derivative of the Poincar\'{e} map \eqref{poincare-maps} is
    \begin{align}\label{def-ext-mon}
        \begin{array}{rcl}
            \mathcal{L} := D\mathcal{S}({x}^\ast_0; r_\ast): T_{{x}^\ast_0}\Sigma \times \mathbb{R} & \longrightarrow & T_{{x}^\ast_0}\Sigma \times \mathbb{R} \\
            (\psi; r) & \longmapsto & ((Q_{\Sigma} L) \psi + r Q_{\Sigma} \varrho_{p_\ast}; r).
        \end{array}
    \end{align}
    Here $\varrho(t) := \partial_3 S(p_\ast, x_0^\ast; r_\ast)$ is the solution of the initial value problem
    \begin{align}\label{IVP-rho}
    \begin{split}
        \dot \varrho(t) = A(t)\varrho(t) + B(t)\varrho(t-1) + \frac{1}{r_\ast}\dot x^\ast(t), \quad \varrho_0 = 0,
    \end{split}
    \end{align}
    with coefficients given by \eqref{linear-coefficients}. Since the spectrum of $L$ consists of eigenvalues and $0$, so does the spectrum of $\mathcal{L}$. Moreover, if $\varphi$ is an eigenfunction of $L$, then $(Q_{\Sigma}\varphi; 0) \in T_{{x}^\ast_0}\Sigma \times \mathbb{R}$ is an eigenfunction of $\mathcal{L}$ associated to the same eigenvalue. Notice that the Poincar\'{e} map \eqref{poincare-maps} is invertible in the $\Sigma$-component because $Q_{\Sigma} \dot {x}^\ast_0 = 0$ and we assumed that $\gamma_\ast$ is hyperbolic. Therefore, the implicit function theorem yields a unique solution of 
    \begin{align}
        \mathcal{S}\left(\tilde{x}^{b}_0; r_\ast + {b}\right), \quad \text{and} \quad \tilde{x}^0_0 = {x}^\ast_0,
    \end{align}
    for all ${b} \in (-\varepsilon, \varepsilon)$. Since we are performing the continuation on the section $\Sigma$ given by \eqref{poincare-section}, the identity $\max_t \tilde{x}^{b}(t) = \tilde{x}^{b}(0)$ follows by construction. The minimal period of each solution is given by the Poincar\'{e} time \eqref{poincare-time} via
    \begin{align}
        \tilde{p}({b}) := \mathcal{T}\left(\tilde{x}_0^{b}; r_\ast + {b}\right).
    \end{align}
    Since all the maps we have considered inherit the $C^k$-regularity from the implicit function theorem, the proof is complete.
\end{proof}

Before proving Lemma \ref{lemma-beta-r-continuation}, we need to introduce a projection that plays a crucial role. The proof of the following lemma is given in Section \ref{Sec7} to ease the technical burden.
\begin{lemma}\label{lemma-projections}
    Let $x^\ast(t)$ be a normalized periodic solution of the DDE \eqref{reference-dde} at delay $r_\ast \neq 0$ with monodromy operator $L$. Let $p_\ast$ denote the minimal period, and consider the function $\varrho(t) = \partial_3 S(t, x_0^\ast; r_\ast)$ solving \eqref{IVP-rho}. If $\Psi_0$ is a generalized eigenfunction of $L$ such that $E^\mathrm{c} = \mathrm{span}_{\mathbb{R}}\{\dot x_0^\ast, \Psi_0\}$, then there exists a projection $P_\Psi : C \to C$ with $Q_\Psi := \mathrm{Id} - P_\Psi$ such that 
    \begin{align}
        \operatorname{ran} P_\Psi = \mathrm{span}_{\mathbb{R}}\{\Psi_0\} \quad \text{and} \quad \operatorname{ran} Q_\Psi = \mathrm{span}_{\mathbb{R}}\{\dot x_0^\ast\} \oplus R^{\mathrm{c}}.
    \end{align}
    Moreover, $P_\Psi$ and $Q_\Psi$ satisfy
    \begin{align}
        P_\Psi \dot x_0^\ast = 0, \quad Q_\Psi \Psi_0 = 0, \quad and \quad P_\Psi \varrho_{p_\ast} \neq 0.
    \end{align}
\end{lemma}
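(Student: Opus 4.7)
The first four algebraic identities follow directly from Proposition \ref{proposition-spectrum}: the $L$-invariant splitting $C = E^\mathrm{c} \oplus R^\mathrm{c}$, combined with the hypothesis that $\dot x_0^\ast$ and $\Psi_0$ form a basis of $E^\mathrm{c}$, yields the three-way direct sum $C = \mathrm{span}_{\mathbb{R}}\{\dot x_0^\ast\} \oplus \mathrm{span}_{\mathbb{R}}\{\Psi_0\} \oplus R^\mathrm{c}$. The plan is to define $P_\Psi$ as the projection onto the middle summand along the other two; the identities $\operatorname{ran} P_\Psi = \mathrm{span}_{\mathbb{R}}\{\Psi_0\}$, $\operatorname{ran} Q_\Psi = \mathrm{span}_{\mathbb{R}}\{\dot x_0^\ast\} \oplus R^\mathrm{c}$, $P_\Psi \dot x_0^\ast = 0$, and $Q_\Psi \Psi_0 = 0$ are then immediate.

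The substantive content is the non-vanishing $P_\Psi \varrho_{p_\ast} \neq 0$, which I would establish by adjoint duality and resonance. Argue by contradiction and suppose $P_\Psi \varrho_{p_\ast} = 0$, i.e., $\varrho_{p_\ast} \in \mathrm{span}_{\mathbb{R}}\{\dot x_0^\ast\} \oplus R^\mathrm{c}$. The monodromy operator $L$ admits a formal adjoint in the Hale--Lunel framework with a dual generalized eigenstructure. Pick an adjoint element $\eta^\ast$ biorthogonal to $\Psi_0$, normalized by $\langle \eta^\ast, \Psi_0\rangle = 1$, $\langle \eta^\ast, \dot x_0^\ast\rangle = 0$, and $\eta^\ast$ annihilating $R^\mathrm{c}$ under the Hale bilinear pairing. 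Then $P_\Psi \varphi = \langle \eta^\ast, \varphi\rangle \Psi_0$, so the contradiction hypothesis reads $\langle \eta^\ast, \varrho_{p_\ast}\rangle = 0$. Applying the variation-of-constants formula to the inhomogeneous linear equation \eqref{IVP-rho} with $\varrho_0 = 0$, and evolving $\eta^\ast$ backward to its adjoint extension $z^\ast(t)$, this pairing rewrites as the resonant integral
\[
\langle \eta^\ast, \varrho_{p_\ast}\rangle = \frac{1}{r_\ast}\int_0^{p_\ast} z^\ast(s)\,\dot x^\ast(s)\,ds = 0.
\]

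The main obstacle is showing that this integral is in fact nonzero. The underlying mechanism is resonance: the forcing $\dot x^\ast$ is itself a homogeneous solution of \eqref{linearized-equation-periodic-orbit} corresponding to the eigenvalue $1$ of $L$, and its adjoint pairing is dual to the biorthogonality $\langle \eta^\ast, \Psi_0\rangle = 1$. I would make this explicit by constructing a particular solution of \eqref{IVP-rho} carrying a secular $\Psi_0$-component, typically of the form $\tfrac{c_0 t}{r_\ast p_\ast}\,\Psi(t) + \beta(t)$, where $\Psi(t)$ is the forward Floquet-type extension of $\Psi_0$ under the homogeneous DDE and $\beta(t)$ is a bounded $p_\ast$-periodic correction, evaluating at $t = p_\ast$ to expose the $\Psi_0$-component of $\varrho_{p_\ast}$ as a nonzero multiple of the Jordan-chain constant $c_0$. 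The careful bookkeeping of the Hale pairing for an impulsive forcing is the technical heart of the argument, and I expect to invoke Lemma \ref{lemma-basic-periodic} to rule out accidental cancellation via the non-vanishing $\ddot x^\ast(0) \neq 0$ and the signed monotone feedback $B(t) \neq 0$, closing the contradiction.
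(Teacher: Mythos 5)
Your construction of $P_\Psi$ from the three-way splitting and your reduction of the substantive claim to the non-vanishing of the resonant integral $\int_0^{p_\ast}\Psi^\mathsf{T}(s)\dot x^\ast(s)\,ds$ match the paper's setup exactly (the paper builds $\eta^\ast=\Psi_0^\mathsf{T}$ from the formal adjoint equation and uses Proposition \ref{propsition-derivative-bilinear-form} to produce precisely this integral). However, the step you call ``the main obstacle'' is the entire content of the lemma, and your proposed mechanism for it does not work. The forcing $\tfrac{1}{r_\ast}\dot x^\ast$ in \eqref{IVP-rho} is the homogeneous solution attached to the Floquet multiplier $1$ through the eigenfunction $\dot x_0^\ast$, \emph{not} through $\Psi_0$; and the adjoint element $\eta^\ast$ is normalized so that $\langle\eta^\ast,\dot x_0^\ast\rangle=0$ (this is exactly the identity $P_\Psi\dot x_0^\ast=0$). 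So the pairing of the adjoint eigenfunction against the forcing is not ``dual to the biorthogonality $\langle\eta^\ast,\Psi_0\rangle=1$''; for a general linear periodic DDE with a double multiplier at $1$ this integral is precisely the Fredholm solvability obstruction and there is no abstract reason for it to be nonzero. Your secular ansatz $\tfrac{c_0t}{r_\ast p_\ast}\Psi(t)+\beta(t)$ only reparametrizes the question: the Jordan-chain constant $c_0$ \emph{is} (a multiple of) the integral you are trying to show is nonzero, so the argument is circular.

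What actually closes the gap in the paper is an oscillation-theoretic argument specific to monotone delayed feedback, and none of it is sketched in your proposal. Concretely: (i) Lemma \ref{lemma-adjoint-eigenfunctions} shows the adjoint eigenfunction $\Psi^\mathsf{T}(t)$ is a genuine (periodic) eigenfunction with exactly two simple zeros per period, interlaced with the zeros of $\dot x^\ast(t)$ via the invariance of the bilinear form; (ii) Lemma \ref{lemma-relative-zero-position} pins down the position of the extrema of $x^\ast$ relative to the times $1$, $q$, $q+1$ and the multiples $n_1p_\ast$; (iii) assuming the integral vanishes, one derives from auxiliary forced solutions ($\eta(t)=t\dot x^\ast(t)$ and a cut-off copy $\chi$ of $\dot x^\ast$) a family of vanishing integrals of $\Psi^\mathsf{T}\ddot x^\ast$ over the unit-length and period-length subintervals; (iv) the sign structure of $\ddot x^\ast$ and the placement of the two zeros of $\Psi^\mathsf{T}$ then force the primitive $X(t)=\int_0^t\Psi^\mathsf{T}\ddot x^\ast$ to take unequal values at two points where the integral identities require equality. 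Invoking only $\ddot x^\ast(0)\neq 0$ and $B(t)\neq 0$, as you suggest, is far from sufficient; the zero-number machinery of \cite{MPSe961,MPNu13} and the interlacing of the adjoint eigenfunction are indispensable.
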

With this we can finally prove the nonhyperbolic continuation in Lemma \ref{lemma-beta-r-continuation}.
\begin{proof}[Proof of Lemma \ref{lemma-beta-r-continuation}]
     We discuss the Poincar\'{e} map \eqref{poincare-maps} in the nonhyperbolic situation $\mu_\mathrm{c} = 1$. Notice that $\mathcal{L} = D\mathcal{S}({x}^\ast_0; r_\ast)$ given by \eqref{def-ext-mon} possesses mixed eigenfunctions $(\zeta_0; 1) \in T_{{x}^\ast_0}\Sigma \times \mathbb{R}$ that solve the eigenvalue problem
    \begin{align}\label{mixed-eigenvalue-problem}
        ((Q_{\Sigma} L)\zeta_0 + Q_{\Sigma} \varrho_{p_\ast}; 1) = \mu(\zeta_0; 1), \quad \text{for some }\mu \in \mathbb{C}.
    \end{align}
    Here $\varrho_{p_\ast} = \partial_3 S(p_\ast, x_0^\ast; r_\ast)$ solves \eqref{IVP-rho}. Comparing the second components in \eqref{mixed-eigenvalue-problem}, the only mixed eigenvalue is $\mu = 1$. Moreover, let us choose a nonzero function $\Psi_0 \in Q_{\Sigma}E^\mathrm{c}$ and consider the projections in Lemma \ref{lemma-projections}. Then the restricted map $({Q}_{\Psi}; \mathrm{Id}) \circ \mathcal{L}: ({Q}_{\Psi} T_{{x}^\ast_0} \Sigma) \times \mathbb{R} \to ({Q}_{\Psi} T_{{x}^\ast_0} \Sigma) \times \mathbb{R}$ is invertible. Hence, there exists a unique solution $(\zeta_0; 1)$ of \eqref{mixed-eigenvalue-problem} with $\zeta_0 \in \ker P_\Psi$ that satisfies
    \begin{align}
        \mathcal{L} (\zeta_0; 1) = (\zeta_0; 1) + P_\Psi ((Q_\Sigma L) \zeta_0 + Q_\Sigma \varrho_{p_\ast}; 1).
    \end{align}
    By Lemma \ref{lemma-projections} and the definition \eqref{sigma-projection}, we obtain
    \begin{align}\label{eq-nonzero-proj}
        \begin{split}
            P_\Psi ((Q_\Sigma L) \zeta_0 + Q_\Sigma \varrho_{p_\ast}) &= P_\Psi(L\zeta_0 + \varrho_{p_\ast})\\
            &= P_\Psi \varrho_{p_\ast}\\
            &\neq 0,
        \end{split}
    \end{align}
    which yields $(\zeta_0; 1)$ in $\ker(\mathcal{L}^2 - \mathrm{Id}) \setminus \ker(\mathcal{L} - \mathrm{Id})$.
    We prove that this ensures a continuation of periodic solutions with respect to the coordinate in the direction $(\Psi_0; 0)$.
    Indeed, we have shown that $\mathcal{L}$ has the same spectrum as $L$ and the eigenvalues have the same algebraic multiplicity. In particular, if $\mu_\mathrm{c} = 1$, then
    \begin{align}\label{def-gap}
        \ker (\mathcal{L} - \mathrm{Id})^2 = \mathrm{span}_{\mathbb{R}} \{(\Psi_0; 0), (\zeta_0; 1)\},
    \end{align}
    and the remainder of the spectrum is disjoint from the complex unit circle.

    Following \cite{HiPuSh77}, by the spectral gap \eqref{def-gap}, all fixed points of
    \begin{align}\label{fixed-point}
        \mathcal{S}(\varphi; r) = (\varphi; r),
    \end{align}
    sufficiently close to $(x_0^\ast; r_\ast)$ belong to a local center manifold $W^\mathrm{c}_{\mathrm{loc}}$. Moreover, the center manifold is represented locally by a $C^k$-function $h^\mathrm{c} : \mathbb{R}^2 \to {Q}_{\Psi} (T_{{x}^\ast_0} \Sigma)$ such that $h^\mathrm{c}(0; r_\ast) = {x}^\ast_0$, $Dh(0; r_\ast) = 0$. Hence, sufficiently close to $({x}^\ast_0; r_\ast)$, all fixed points of \eqref{fixed-point} are of the form 
    \begin{align}\label{expansion-nonhyp-solution}
        (\varphi({b}; r); r) := ({x}^\ast_0 + {b} \Psi_0 + (r - r_\ast) \zeta_0 + h^\mathrm{c}({b}; r); r) \in C \times \mathbb{R}.
    \end{align}
    
    Recall from our construction that $\zeta_0, h^\mathrm{c} \in {Q}_{\Psi} (T_{{x}^\ast_0} \Sigma)$. Hence, if we denote the first component of the Poincar\'{e} map $\mathcal{S}$ by $\mathcal{S}^1$, then the problem \eqref{fixed-point} is equivalent to solving
    \begin{align}
        P_\Psi \left(\mathcal{S}^1(\varphi({b};r);r) - \varphi({b};r)\right) = 0, \quad \varphi(0;r_\ast) = {x}^\ast_0.
    \end{align}
    To apply the implicit function theorem, by \eqref{eq-nonzero-proj}, notice that
    \begin{align}
        \begin{split}
            \partial_r {P}_\Psi \left(
            \mathcal{S}^1(\varphi({b}; r)) - \varphi({b}; r)\right)&= {P}_\Psi \left( {Q}_\Sigma L\zeta_0 + {Q}_\Sigma \varrho_{p_\ast} - \zeta_0 \right)\\
            &= {P}_\Psi\varrho_{p_\ast}\\
            &\neq 0.
        \end{split}
    \end{align}
    Hence, there exists a unique local $C^k$-continuation such that
    \begin{align}\label{fixed-point-Psi}
        {P}_\Psi \left(\mathcal{S}^1(\varphi({b};\tilde{r}({b}));r({b})) - \varphi({b};\tilde{r}({b}))\right) = 0, \quad (\varphi(0;\tilde{r}(0)); \tilde{r}(0)) = ({x}^\ast_0; r_\ast),
    \end{align}
    and we use the Poincar\'{e} time \eqref{poincare-time} to define
    \begin{align}
        \tilde{x}_0^{b} := \varphi({b}; \tilde{r}({b})) \quad \text{and} \quad \tilde{p}({b}) := \mathcal{T}(\tilde{x}_0^{b}; \tilde{r}({b})).
    \end{align}\\
    Finally, we show that $\tilde{r}'(0) = 0$ and $\partial_{b} \tilde{x}^0(0) \neq 0$. Indeed, differentiating the continuation equation \eqref{fixed-point-Psi} in ${b}$ yields
    \begin{align}
        \begin{split}
            0 &= \tilde{r}'(0) {P}_\Psi  \varrho_{p_\ast},
        \end{split}
    \end{align}
    which ensures $\tilde{r}'(0) = 0$, as claimed. Next, suppose that $\partial_{b}\tilde{x}^0(0) = 0$, by construction, we have that
    \begin{align}
        \partial_{b} \tilde{x}^0_0 = \Psi_0 + r'(0) \zeta_0 = \Psi_0.
    \end{align}
    Hence, $\Psi_0(0) = 0$ and we can find a $\kappa \in \mathbb{R}$ such that $\dot x_0^\ast + \kappa \Psi_0$ is an eigenfunction of $L$ with a double zero at $0$. This is a contradiction to \cite[Theorem 5.1]{MPNu13}, since the eigenfunctions of $L$ possess only simple zeros.
\end{proof}

\begin{remark}
    A minor inconvenience of using the phase space $C$ is that it does not admit the smooth cutoff functions required for constructing $W^\mathrm{c}_{\mathrm{loc}}$ in \cite{HiPuSh77}. In practice, this is not an issue because the solutions of the fixed point problem \eqref{fixed-point} belong to the Sobolev space of functions with a square-integrable weak derivative. Such Sobolev space allows smooth cutoff functions, so that we may construct the center manifold in Sobolev space by replicating the Poincar\'{e} map construction in the proof of Lemma \ref{lemma-beta-hyperbolic-continuation}. Further technical details on DDE semiflows defined in Sobolev spaces can be found in \cite{Ni19, Lo23}.
\end{remark}

\section{Proof of theorems \ref{thm1}, \ref{thm2}, and \ref{thm3}} \label{Sec4}
First, we show that, locally, we can replace the parameter $b$ in Lemma \ref{lemma-beta-hyperbolic-continuation} and Lemma \ref{lemma-beta-r-continuation} by the amplitude of the periodic solution with initial condition $(\tilde{x}^b_0; \tilde{r}(b))$. Achieving this is crucial because the amplitude is a global parameter that allows us to compare local charts $\beta_{\mathrm{loc}}$ of the branch $\mathcal{B}$, in contrast to the local parameter $b$.
\begin{theorem}\label{thm-annulus-embedding}
    Let ${x}^\ast(t)$ be a periodic solution of the DDE \eqref{reference-dde} at delay $r_{\ast}$ with minimal period $p_\ast$. Furthermore, consider $\tilde{x}^{b}_t$, $\tilde{r}(b)$, $\tilde{p}({b})$, the ${b}$-continuation of periodic orbits obtained in Lemma \ref{lemma-beta-hyperbolic-continuation} if the orbit $\gamma_{\ast}$ of ${x}^\ast(t)$ is hyperbolic and consider the continuation in Lemma \ref{lemma-beta-r-continuation} otherwise. Then, there exists an $\varepsilon>0$ such that the map $\tilde{G}: \mathbb{R}^2 \to \mathbb{R}^2$ given by
    \begin{align}\label{definition-annulus-embedding}
    \tilde{G}(t, {b}) := \left(\tilde{x}^{b}(t), \tilde{x}^{b}(t - 1)\right)
    \end{align}
    is a $C^k$-embedding of the annulus 
    \begin{align}\label{domain-equation-annulus}
        \tilde{\mathbb{A}} := \{(t; {b}): t\in \mathbb{R} / \tilde{p}({b})\mathbb{Z},\; {b} \in (-\varepsilon, \varepsilon)\}.
    \end{align}
\end{theorem}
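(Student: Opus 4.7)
The strategy is as follows. The map $\tilde{G}$ is $C^k$ on $\mathbb{R}^2$ and descends to $\tilde{\mathbb{A}}$ because $\tilde{x}^b(t+\tilde{p}(b)) = \tilde{x}^b(t)$. To promote $\tilde{G}$ to a $C^k$-embedding of the annulus, I would verify two properties: (i) $\tilde{G}$ is an immersion on $\tilde{\mathbb{A}}$, and (ii) $\tilde{G}$ is injective on $\tilde{\mathbb{A}}$. Since $\dim \tilde{\mathbb{A}} = 2 = \dim \mathbb{R}^2$ and the image lies in a bounded tubular neighborhood of the Jordan curve $P\gamma_\ast$, these together yield a $C^k$-diffeomorphism onto the image, that is, a $C^k$-embedding.

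For (i), I compute the Jacobian
\begin{align*}
D\tilde{G}(t, b) =
\begin{pmatrix}
\dot{\tilde{x}}^b(t) & \partial_b \tilde{x}^b(t) \\
\dot{\tilde{x}}^b(t-1) & \partial_b \tilde{x}^b(t-1)
\end{pmatrix}.
\end{align*}
Its first column is tangent to the Jordan curve $P\gamma_b$ which, by the Poincar\'{e}--Bendixson theorem \cite[Theorem 2.1]{MPSe962}, is $C^k$-regular, so this column never vanishes. It suffices to show $\det D\tilde{G}(t, 0) \neq 0$ for every $t$, after which continuity and compactness of $\mathbb{R}/p_\ast\mathbb{Z}$ allow shrinking $\varepsilon$ so that the determinant remains nonzero on all of $\tilde{\mathbb{A}}$. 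At the maximum $t = 0$, Lemma \ref{lemma-basic-periodic} gives $\dot{x}^\ast(0) = 0$ with $\dot{x}^\ast(-1) \neq 0$, reducing the immersion condition to $\partial_b \tilde{x}^0(0) \neq 0$. In the nonhyperbolic case this is exactly Lemma \ref{lemma-beta-r-continuation}; in the hyperbolic case, I would imitate the final step of the proof of Lemma \ref{lemma-beta-r-continuation}, using the Poincar\'{e} section identity $\partial_1 f\,\partial_b \tilde{x}^0(0) + \partial_2 f\,\partial_b \tilde{x}^0(-1) = 0$ and the simple-zero property \cite[Theorem 5.1]{MPNu13} to derive a contradiction from $\partial_b \tilde{x}^0(0) = 0$. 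A symmetric argument handles the time of depth $t = q$. At non-extremum times, linear dependence of the two columns would force a nonzero combination $\partial_b \tilde{x}^0 - \lambda\,\dot{x}^\ast$ to vanish at both $t_0$ and $t_0 - 1$, which the zero-number structure of Proposition \ref{proposition-spectrum} and \cite{MPNu13} excludes.

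For (ii), I use that each slice $t\mapsto \tilde{G}(t, b)$ is injective on $\mathbb{R}/\tilde{p}(b)\mathbb{Z}$ by the Jordan curve property, and that the constant sign of $\det D\tilde{G}$ in a tubular neighborhood forces the family $\{P\gamma_b\}_{|b| < \varepsilon}$ to foliate the neighborhood monotonically in the transverse direction. For sufficiently small $\varepsilon$, this precludes intersection of two distinct Jordan curves and yields global injectivity.

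The main obstacle is the verification that $\det D\tilde{G}(t, 0)$ is nowhere zero along the orbit circle. The hyperbolic case at the extrema, and the analysis at non-extremum times, both require importing the zero-number machinery of \cite{MPNu13} on generalized Floquet eigenfunctions via Proposition \ref{proposition-spectrum}, mirroring the final step of the proof of Lemma \ref{lemma-beta-r-continuation} rather than applying the continuation lemmas directly.
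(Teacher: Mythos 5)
Your frame (immersion plus injectivity implies embedding) is reasonable, but the heart of the theorem is exactly the claim you dispatch in one sentence, namely that $\det D\tilde{G}(t,0)\neq 0$ for \emph{every} $t$, and your argument for it does not work. At a non-extremum time $t_0$, linear dependence of the columns gives a $\lambda$ with $w(t_0)=w(t_0-1)=0$ for $w:=\partial_b\tilde{x}^0-\lambda\,\dot{\tilde{x}}^0$. But in the hyperbolic case $w$ does \emph{not} solve the homogeneous linearized equation: by \eqref{equation-t-beta-derivative}, $\partial_b\tilde{x}^0$ carries the inhomogeneous forcing $\tfrac{1}{r_\ast}\dot{\tilde{x}}^0(t)$ coming from the variation of the delay, so the simple-zero and zero-number results of \cite{MPNu13}, which concern Floquet (generalized) eigenfunctions, say nothing about $w$. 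Indeed $\dot w(t_0)=\tfrac{1}{r_\ast}\dot{\tilde{x}}^0(t_0)\neq 0$ at a non-extremum time, so $w$ merely has a harmless simple zero there and no contradiction arises. The same inhomogeneity defeats your plan at the extrema in the hyperbolic case: there $\partial_b\tilde{x}^0$ is not proportional to an eigenfunction $\Psi_0$ (it has a component along $\varrho$), so the final step of Lemma \ref{lemma-beta-r-continuation} does not transfer; in fact $\partial_b\tilde{x}^b(0)\neq 0$ in the hyperbolic case is a \emph{consequence} of this theorem (see Corollary \ref{corollary-local-amplitude-continuation}), not something available as an input. Your sketch only goes through in the nonhyperbolic case, where $\tilde{r}'(0)=0$ kills the forcing and $\partial_b\tilde{x}^0$ really is an eigenfunction.

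The paper's proof inverts your order for precisely this reason. It first establishes that $\tilde{G}$ is a $C^0$-embedding (Lemma \ref{lemma-annulus-homeomorphism}, which is the entire Jordan-curve, crossing/tangency, and nesting analysis of Section \ref{Sec5}); injectivity then forces every zero of $\det D\tilde{G}(\cdot,0)$ to be a degenerate critical point, $\nabla\det D\tilde{G}=0$, since a sign change of the Jacobian would violate local injectivity. That degeneracy powers the propagation-of-singularities Lemma \ref{lemma-propagate-zeros}, which pushes a single zero backwards through integer time steps (with a second-order expansion to get past the extrema), and the proof closes with a dichotomy: density of $\{t_0-n\}$ on the orbit when $p_\ast$ is irrational, or a monotone cyclic feedback computation when $p_\ast$ is rational, in either case contradicting Lemma \ref{lemma-not-identically-zero} (which only yields a \emph{single} point of nondegeneracy). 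Your step (ii) cannot supply the missing injectivity, since it invokes the everywhere-nonzero determinant from step (i), while the only available proof of (i) needs injectivity as an input; as structured, the two halves of your proposal lean on each other.
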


Before introducing the lemmata required to prove Theorem \ref{thm-annulus-embedding}, we show a corollary. More precisely, we can  locally replace the parameter $b$ in Lemma \ref{lemma-beta-hyperbolic-continuation} and Lemma \ref{lemma-beta-r-continuation} by the amplitude $a$ of the periodic solutions.

\begin{corollary}[Local amplitude continuation]\label{corollary-local-amplitude-continuation}
    Let ${x}^\ast(t)$ be a normalized periodic solution of the DDE \eqref{reference-dde} at the delay $r_\ast \neq 0$. Then there exist an open interval $\mathcal{J}\subset \mathbb{R}$ containing ${x}^\ast(0)$, $C^k$-maps $p: \mathcal{J} \to (0, \infty)$, $r:\mathcal{J} \to \mathbb{R}$, and a $C^k$-family of functions $x^a \in C^k(\mathbb{R}, \mathbb{R})$ such that the following hold:
    \begin{enumerate}
        \item At $a_\ast := {x}^\ast(0)$, we have that
        \begin{align}
            p(a_\ast) = p_\ast,\quad r(a_\ast) = r_\ast, \quad \text{and} \quad x^{a_\ast}(t) = {x}^\ast(t), \quad \text{for all } t \in \mathbb{R}.
        \end{align}
        \item The functions $x^a(t)$ have minimal period $p(a)$, satisfy $\max_t x^a(t) = a$, and solve
        \begin{align}
        \begin{split}
            \dot x^a(t) &= r(a) f(x^a(t), x^a(t - 1)),\quad \text{for all } (t, a) \in \mathbb{R} \times \mathcal{J}.
        \end{split}
        \end{align}
    \end{enumerate}
    Moreover, let $\mathbb{A}_{\mathrm{loc}} := \{(t, a) : t \in \mathbb{R} / p(a)\mathbb{Z} ,\, a \in \mathcal{J}\}$, then the transformations
    \begin{align}
        \begin{array}{rcl}
            \beta_{\mathrm{loc}}: \mathbb{A}_{\mathrm{loc}} & \longrightarrow & C \times \mathbb{R}  \\
            (t, a) & \longmapsto & (x_t^a; r(a)),
        \end{array}
    \end{align}
    and
    \begin{align}
        \begin{array}{rcl}
            G_{\mathrm{loc}}: \mathbb{A}_{\mathrm{loc}} & \longrightarrow & C \times \mathbb{R}  \\
            (t, a) & \longmapsto & (x^a(t), x^a(t - 1)),
        \end{array}
    \end{align}
    are $C^k$-embeddings.
\end{corollary}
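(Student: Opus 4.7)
The plan is to apply the $b$-continuation from Lemma \ref{lemma-beta-hyperbolic-continuation} (hyperbolic case) or Lemma \ref{lemma-beta-r-continuation} (nonhyperbolic case), and then reparametrize by amplitude via the inverse function theorem. I write the resulting delay map uniformly as $\tilde r(b)$, meaning $r_\ast + b$ in the hyperbolic case. The candidate amplitude map is $b \mapsto \tilde x^b(0)$, and the whole reparametrization hinges on the nondegeneracy $\partial_b \tilde x^0(0) \neq 0$.

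Establishing this nondegeneracy is the only genuinely nonroutine step. It is explicitly stated in Lemma \ref{lemma-beta-r-continuation} in the nonhyperbolic case. In the hyperbolic case I would read it off Theorem \ref{thm-annulus-embedding}: that theorem asserts $\tilde G(t, b) := (\tilde x^b(t), \tilde x^b(t-1))$ is an immersion, so at $(0, 0)$ its two columns must be linearly independent. The $t$-derivative there equals $(\dot x^\ast(0), \dot x^\ast(-1))$, which is purely vertical with nonzero second entry by the normalization $x^\ast(0) = \max_t x^\ast(t)$ together with Lemma \ref{lemma-basic-periodic}; hence the first entry $\partial_b \tilde x^0(0)$ of the $b$-derivative must be nonzero.

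With this in hand, the inverse function theorem produces a $C^k$-inverse $a \mapsto b(a)$ on an open interval $\mathcal{J}$ around $a_\ast := x^\ast(0)$, with $b(a_\ast) = 0$. I then set
\[
    x^a(t) := \tilde x^{b(a)}(t), \quad r(a) := \tilde r(b(a)), \quad p(a) := \tilde p(b(a)),
\]
and items 1 and 2 of the corollary follow by direct substitution: at $a = a_\ast$ everything collapses to the starred data; the identities $\max_t x^a(t) = x^a(0) = a$ are forced by the defining equation $\tilde x^{b(a)}(0) = a$ together with the analogous property of $\tilde x^b$; and the DDE at delay $r(a)$ is satisfied by the respective continuation lemma.

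To establish the embedding properties, the map $\Phi(t, a) := (t, b(a))$ is a $C^k$-diffeomorphism from $\mathbb{A}_{\mathrm{loc}}$ onto its image in $\tilde{\mathbb{A}}$, so $G_{\mathrm{loc}} = \tilde G \circ \Phi$ is a $C^k$-embedding by Theorem \ref{thm-annulus-embedding}. For $\beta_{\mathrm{loc}}$, I would use the factorization $\bar P \circ \beta_{\mathrm{loc}} = G_{\mathrm{loc}}$: since $\bar P$ is continuous and linear, injectivity of $G_{\mathrm{loc}}$ forces injectivity of $\beta_{\mathrm{loc}}$, injectivity of $DG_{\mathrm{loc}}$ forces injectivity of $D\beta_{\mathrm{loc}}$, and continuity of $\beta_{\mathrm{loc}}^{-1}$ on its image follows from that of $G_{\mathrm{loc}}^{-1}$ by pulling sequences through $\bar P$.
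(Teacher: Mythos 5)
Your proposal is correct and follows essentially the same route as the paper: both derive $\partial_b\tilde{x}^0(0)\neq 0$ from the nonsingularity of $D\tilde{G}(0,0)$ in Theorem \ref{thm-annulus-embedding} (using that the $t$-derivative row is $(0,\dot{x}^\ast(-1))$ with $\dot{x}^\ast(-1)\neq 0$ by Lemma \ref{lemma-basic-periodic}), invert the amplitude map by the inverse function theorem, and then obtain the embedding property of $\beta_{\mathrm{loc}}$ from that of $G_{\mathrm{loc}}=\bar{P}\circ\beta_{\mathrm{loc}}$ via the commutative diagram. The only cosmetic difference is that the paper invokes Theorem \ref{thm-annulus-embedding} uniformly for both the hyperbolic and nonhyperbolic cases, whereas you cite Lemma \ref{lemma-beta-r-continuation} directly in the nonhyperbolic case; both are valid.
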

\begin{proof}
    As a consequence of Theorem \ref{thm-annulus-embedding}, we have that the amplitude $a({b}) := \tilde{x}^{b}(0)$ of the ${b}$-continuations in Lemma \ref{lemma-beta-hyperbolic-continuation} and Lemma \ref{lemma-beta-r-continuation} satisfies
    \begin{align}
        \partial_{b} \tilde{x}^{b}(0) \neq 0.
    \end{align}
    Hence, locally, we can write ${b}$ as a $C^k$-function of the amplitude, independently of whether the orbit $\gamma_{\ast}$ of ${x}^\ast(t)$ is hyperbolic or not. To see that $\beta_{\mathrm{loc}}$ is an embedding, notice that $G_{\mathrm{loc}}(t, a) = \tilde{G}(t, b(a))$ is an embedding. Hence, by the commutative diagram
    \begin{equation}\label{local-cd}
    \begin{tikzcd}
      \mathbb{A}_{\mathrm{loc}} \arrow[r, "\beta_{\mathrm{loc}}"] \arrow[dr, "G_{\mathrm{loc}}"'] 
        & \beta_{\mathrm{loc}}(\mathbb{A}_{\mathrm{loc}}) \arrow[d, "\bar{P}"] \\
        & \bar{P}\beta_{\mathrm{loc}}(\mathbb{A}_{\mathrm{loc}}),
    \end{tikzcd}
    \end{equation}
    we have in \eqref{local-cd} that $\bar{P}$ and $\beta_{\mathrm{loc}}$ are $C^k$-diffeomorphisms with inverses
    \begin{align}
        \bar{P}^{-1} = \beta_{\mathrm{loc}} \circ G_{\mathrm{loc}}^{-1} \quad \text{and} \quad \beta_{\mathrm{loc}}^{-1} = G_{\mathrm{loc}}^{-1} \circ \bar{P}.
    \end{align}
\end{proof}

\begin{lemma}\label{lemma-annulus-homeomorphism}
    Under the assumptions of Theorem \ref{thm-annulus-embedding}, the map $\tilde{G}$ given by \eqref{definition-annulus-embedding} is a $C^0$-embedding of the annulus.
\end{lemma}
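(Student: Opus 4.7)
The plan is to establish that $\tilde{G}$ is a $C^0$-embedding by showing continuity, injectivity, and continuity of the inverse on its image. The $C^k$-regularity of $\tilde{x}^b(t)$ in $(t,b)$ from Lemma \ref{lemma-beta-hyperbolic-continuation} and Lemma \ref{lemma-beta-r-continuation} supplies continuity of $\tilde{G}$, while continuity of $\tilde{G}^{-1}$ on the image reduces to local compactness of $\tilde{\mathbb{A}}$ once injectivity is in hand: a continuous injection from a locally compact Hausdorff space into a Hausdorff target is a homeomorphism onto its image when restricted to any compact subset, and exhausting $\tilde{\mathbb{A}}$ by such compacta yields the global statement.

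The main content is thus injectivity. Within each fixed $b$-slice, $t \mapsto \tilde{G}(t,b)$ is injective on $\mathbb{R}/\tilde{p}(b)\mathbb{Z}$ because the Poincar\'e--Bendixson theorem of \cite{MPSe962} certifies $P\tilde{\gamma}^b$ as a Jordan curve. For cross-slice injectivity between $b_1 \neq b_2$, I plan to shrink $\varepsilon$ and argue that $\tilde{G}$ is a local $C^k$-immersion in a neighborhood of $\{b=0\}\subset \tilde{\mathbb{A}}$, so that nearby Jordan curves $P\tilde{\gamma}^b$ foliate a tubular neighborhood of $P\gamma_\ast$ pairwise disjointly. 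The immersion condition reduces to nonvanishing of the Wronskian
\begin{align*}
    W(t) = \dot{x}^\ast(t)\,\partial_b \tilde{x}^0(t-1) - \dot{x}^\ast(t-1)\,\partial_b \tilde{x}^0(t)
\end{align*}
on $\mathbb{R}/p_\ast \mathbb{Z}$. A zero $W(t_0)=0$ would yield a scalar $\kappa$ with $\partial_b \tilde{x}^0(t_0)=\kappa \dot{x}^\ast(t_0)$ and $\partial_b \tilde{x}^0(t_0-1)=\kappa \dot{x}^\ast(t_0-1)$. In the nonhyperbolic case, $\partial_b \tilde{x}^0$ solves the homogeneous linearized DDE, so the combination $\partial_b \tilde{x}^0 - \kappa \dot{x}^\ast$ is an element of the generalized eigenspace of $L$ at eigenvalue one with a double zero, contradicting the simple-zero property of \cite[Theorem 5.1]{MPNu13}. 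In the hyperbolic case, $\partial_b \tilde{x}^0$ satisfies the linearized DDE with inhomogeneous forcing $\dot{x}^\ast/r_\ast$, and I expect to replicate the argument after passing to the extended monodromy $\mathcal{L}$ from \eqref{def-ext-mon}.

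The main obstacle I anticipate is the hyperbolic case of the Wronskian nonvanishing, because the inhomogeneous forcing in the linearized DDE prevents a direct application of the eigenfunction simple-zero property; I plan to resolve this by working inside the extended Poincar\'e map $\mathcal{S}$ and using that its derivative $\mathcal{L}$ has the same spectral structure as $L$, so that $(\partial_b \tilde{x}_0^0; 1) \in \ker(\mathcal{L} - \mathrm{Id})^2$ can be treated like a generalized eigenfunction after projecting out the $r$-direction via the projection introduced in Lemma \ref{lemma-projections}. Lemma \ref{lemma-basic-periodic} also ensures that $\dot{x}^\ast(t)$ and $\dot{x}^\ast(t-1)$ never vanish simultaneously, so the parallelism scalar $\kappa$ is well-defined whenever $W(t_0)=0$, making the contradiction rigorous.
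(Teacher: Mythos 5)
Your plan inverts the logical order of the paper and, as a result, has a gap that the sketched tools cannot close. The Wronskian you write down is exactly $\det D\tilde{G}(t,0)$, so what you are proposing to prove first is the \emph{immersion} property, i.e.\ the content of Theorem \ref{thm-annulus-embedding}. But in the paper that immersion property is derived \emph{from} Lemma \ref{lemma-annulus-homeomorphism}: the propagation-of-singularities argument (Lemma \ref{lemma-propagate-zeros}) explicitly invokes the $C^0$-embedding to conclude that any zero of $\det D\tilde{G}$ must be a tangency with $\nabla\det D\tilde{G}=0$, and again to force $\partial_b^2\tilde{x}^0(0)=0$ at extrema. The reason the paper takes this longer route is precisely the point where your argument breaks: in the hyperbolic case $\partial_b\tilde{x}^0$ solves the \emph{inhomogeneous} variational equation \eqref{equation-t-beta-derivative} with forcing $\dot{x}^\ast/r_\ast$, so from $W(t_0)=0$ the function $y=\partial_b\tilde{x}^0-\kappa\dot{x}^\ast$ satisfies $y(t_0)=y(t_0-1)=0$ but $\dot y(t_0)=\dot{x}^\ast(t_0)/r_\ast$, which is generically nonzero; no double zero is produced and the simple-zero theorem of \cite{MPNu13} yields no contradiction. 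Passing to the extended monodromy $\mathcal{L}$ does not rescue this, because the zero-number machinery of \cite{MPSe961, MPNu13} applies to the scalar linearized DDE, not to the extended system with the $r$-component, and $(\partial_b\tilde{x}_0^0;1)\in\ker(\mathcal{L}-\mathrm{Id})^2$ carries no a priori information about simple zeros of its first component. (Your nonhyperbolic argument is essentially sound, since there $\tilde r'(0)=0$ makes the variational equation homogeneous and $y(t_0)=y(t_0-1)=0$ does force $\dot y(t_0)=0$; but that case is not where the difficulty lies.)

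The paper's actual proof is entirely different in character: it is a topological argument about one-parameter families of Jordan curves. It distinguishes crossings from tangencies, uses the stability of crossings, the fixed-delay nesting property of \cite{MPSe962}, the time-rescaling symmetry \eqref{def-resc-sym} together with Lemma \ref{lemma-constant-product} (constancy of $\tilde p(b)\tilde r(b)$ when projections persistently intersect), the monotone cyclic feedback nesting of \cite{MPSm90} for rational periods, and a nullcline-triangle argument in the case $\tilde r'(0)=0$. None of these ingredients appear in your proposal, and without them the cross-slice injectivity in the hyperbolic case remains unproved. To repair your approach you would either have to reproduce this topological analysis or find an independent proof that $\det D\tilde{G}(\cdot,0)$ never vanishes in the hyperbolic case, which is a strictly stronger statement than the lemma itself.
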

Proving Lemma \ref{lemma-annulus-homeomorphism} requires a discussion on intersections of continuous families of Jordan curves. For this reason, we have delayed the proof to Section \ref{Sec5}.

\begin{lemma}\label{lemma-not-identically-zero}
    Under the assumptions of Theorem \ref{thm-annulus-embedding}, there exists a $t_0 \in \mathbb{R}$ such that
    \begin{align}
        \det (D\tilde{G}(t_0, 0)) \neq 0.
    \end{align}
\end{lemma}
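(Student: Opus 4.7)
The plan is a proof by contradiction. Writing $y(t) := \partial_b \tilde{x}^0(t)$, one has $\det D\tilde{G}(t, 0) = \dot{x}^\ast(t) y(t-1) - \dot{x}^\ast(t-1) y(t)$, and I would assume this vanishes for all $t \in \mathbb{R}$. The strategy is to show that the quotient $\phi(t) := y(t)/\dot{x}^\ast(t)$ extends to a globally $C^k$-smooth 1-periodic function on $\mathbb{R}$ whose derivative is forced by the linearized DDE to equal $1/r_\ast$ in the hyperbolic case and $0$ in the nonhyperbolic case, producing a contradiction in each.

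First I would record the equation governing $y$. In the hyperbolic setting of Lemma \ref{lemma-beta-hyperbolic-continuation}, differentiating $\dot{\tilde{x}}^b(t) = (r_\ast + b)\, f(\tilde{x}^b(t), \tilde{x}^b(t-1))$ in $b$ at $b = 0$ would yield $\dot{y}(t) = A(t) y(t) + B(t) y(t-1) + \dot{x}^\ast(t)/r_\ast$. In the nonhyperbolic setting of Lemma \ref{lemma-beta-r-continuation}, the identity $\tilde{r}'(0) = 0$ would instead give the homogeneous equation $\dot{y} = Ay + B y(\cdot - 1)$. Differentiating the max normalization $\dot{\tilde{x}}^b(0) = 0$ in $b$ would produce the boundary condition $\dot{y}(0) = 0$.

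Next I would show that $\phi$ extends smoothly to all of $\mathbb{R}$ and is 1-periodic. By Lemma \ref{lemma-basic-periodic}, both $\dot{x}^\ast(-1)$ and $\dot{x}^\ast(q-1)$ are nonzero, so evaluating the contradiction hypothesis at each zero of $\dot{x}^\ast$ (the points $k p_\ast$ and $q + k p_\ast$) forces $y$ to vanish there too. Since all such zeros are simple, the ratio $\phi = y/\dot{x}^\ast$ has only removable singularities and extends to a $C^k$-function on $\mathbb{R}$. The same Lemma \ref{lemma-basic-periodic} implies $(\dot{x}^\ast(t), \dot{x}^\ast(t-1))$ never simultaneously vanishes, so $\det \equiv 0$ is equivalent to the parallelism $(y(t), y(t-1)) = \phi(t)(\dot{x}^\ast(t), \dot{x}^\ast(t-1))$, whose second coordinate, combined with the first, yields the 1-periodicity identity $\phi(t) = \phi(t-1)$.

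Finally I would substitute $y = \phi \dot{x}^\ast$ into the linear equation for $y$ and use the identity $\ddot{x}^\ast = A\dot{x}^\ast + B \dot{x}^\ast(\cdot - 1)$ obtained by differentiating the DDE for $x^\ast$. The $\phi\ddot{x}^\ast$ terms would cancel, leaving $\dot{\phi} \equiv \chi/r_\ast$ on $\mathbb{R}$, where $\chi = 1$ in the hyperbolic case and $\chi = 0$ in the nonhyperbolic case. However, a 1-periodic $C^1$-function with constant derivative must have that derivative equal to zero, so the hyperbolic case contradicts $1/r_\ast \neq 0$. In the nonhyperbolic case $\phi$ is constant, and the L'Hopital limit $\phi(0) = \dot{y}(0)/\ddot{x}^\ast(0) = 0$ forces $\phi \equiv 0$, hence $y \equiv 0$, contradicting $\partial_b \tilde{x}^0(0) \neq 0$ from Lemma \ref{lemma-beta-r-continuation}. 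The main obstacle is the hyperbolic case: no pointwise evaluation of the determinant yields the contradiction directly (one only learns $y(0) = 0$), and the key insight is to globalize $\det \equiv 0$ into the 1-periodicity of $\phi$ and then pit this against the nonzero constant drift $1/r_\ast$ imposed by the linearized DDE.
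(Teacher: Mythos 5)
Your proof is correct, and its core mechanism is the same as the paper's: assume $\det D\tilde G(\cdot,0)\equiv 0$, form the quotient $\phi=\partial_b\tilde x^0/\dot{\tilde x}^0$, extract its $1$-periodicity from the vanishing determinant, and play that against the variational equation \eqref{equation-t-beta-derivative}. The differences are in execution. In the hyperbolic case the paper splits on whether $p_\ast$ is rational: for irrational $p_\ast$ it uses that a function which is both $1$- and $p_\ast$-periodic is constant, and for rational $p_\ast$ it integrates $\dot\lambda=1/r_\ast$. Your argument avoids this dichotomy entirely: once you justify (via the simplicity of the zeros of $\dot x^\ast$ and the forced vanishing of $\partial_b\tilde x^0$ at those zeros) that $\phi$ extends to a $C^1$, $1$-periodic function, the identity $\dot\phi\equiv 1/r_\ast\neq 0$ contradicts periodicity outright; this is cleaner and also more careful than the paper about the regularity of the quotient at the zeros of $\dot x^\ast$ (note the extension is $C^{k-1}$, not $C^k$, but $C^1$ is all you need since $k\geq 2$). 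In the nonhyperbolic case you take a detour the paper does not: the paper simply evaluates $\det D\tilde G(0,0)=-\dot{\tilde x}^0(-1)\,\partial_b\tilde x^0(0)\neq 0$ and is done with $t_0=0$. Your own argument already contains this shortcut — the step where $\det\equiv 0$ forces $\partial_b\tilde x^0(0)=0$ at the zero $t=0$ of $\dot x^\ast$ directly contradicts $\partial_b\tilde x^0(0)\neq 0$ from Lemma \ref{lemma-beta-r-continuation} — so the subsequent L'H\^opital computation and the conclusion $\phi\equiv 0$ are valid but redundant.
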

\begin{proof}
    We consider the nonhyperbolic and hyperbolic situations separately.
    First, if the orbit of $x^\ast(t)$ is nonhyperbolic, then, by Lemma \ref{lemma-beta-r-continuation}, we have that $\partial_{b} \tilde{x}^0(0) \neq 0$. Since
    \begin{align}
        \det (D\tilde{G}(0, 0)) = \det\left(\begin{array}{cc}
            0 & \dot{\tilde{x}}^0(-1) \\
            \partial_{b} \tilde{x}^0(0) & \partial_{b} \tilde{x}^0(-1)
        \end{array} \right),
    \end{align}
    and $\dot{\tilde{x}}^0(-1) \neq 0$ by Lemma \ref{lemma-basic-periodic}, the claims hold with $t_0 = 0$.
    
    Second, we assume that the orbit of ${x}^\ast(t)$ is hyperbolic. Notice that $\dot{\tilde{x}}^0(t)$ and $\partial_{b} \tilde{x}^0(t)$ solve the DDEs
    \begin{align}\label{equation-t-beta-derivative}
        \begin{split}
            \ddot{\tilde{x}}^0(t) &= A(t) \dot{\tilde{x}}^0(t) + B(t)  \dot {\tilde{x}}^0(t - 1),\; \text{and}\\
            \partial_{b} \dot{\tilde{x}}^0(t) &= A(t) \partial_{b} \tilde{x}^0(t) + B(t) \partial_{b} \tilde{x}^0(t - 1) + \frac{1}{r_\ast} \dot{\tilde{x}}^0(t),
        \end{split}
    \end{align}
    with coefficients \eqref{linear-coefficients}. We proceed by contradiction and suppose that $\det (D\tilde{G}(t, 0)) \equiv 0$. Thus, there exists a continuous function $\lambda(t)$ such that
    \begin{align}
        \lambda(t) = \lambda(t - 1)\quad \text{and} \quad \partial_{b} \tilde{x}^0(t) = \lambda(t)\dot{\tilde{x}}^0(t).
    \end{align}
    On the one hand, if the minimal period $p_\ast$ is irrational, then $\lambda(t)$ is constant on a dense subset of $\mathbb{R}/p_\ast\mathbb{Z}$ and we obtain that $\lambda(t) \equiv \lambda$ for some $\lambda\in \mathbb{R}$. Hence $\partial_{b} \tilde{x}^0(t) = \lambda \dot{\tilde{x}}^0(t)$, in contradiction to the identities \eqref{equation-t-beta-derivative}.
    On the other hand, if $p_\ast$ is rational, then, by \eqref{equation-t-beta-derivative}, we obtain
    \begin{align}
        \begin{split}
            \partial_{b} \dot{\tilde{x}}^0(t) &= \lambda(t) \ddot{\tilde{x}}^0(t) + \dot\lambda(t) \dot{\tilde{x}}^0(t)\\
            &= \lambda(t) \ddot{\tilde{x}}^0(t) + \frac{1}{r_\ast} \dot{\tilde{x}}^0(t).
        \end{split}
    \end{align}
    In particular, we can integrate $\dot\lambda(t)$ to derive
    \begin{align}
        \lambda(t) = \left(\frac{1}{r_\ast}t + \lambda(0)\right), 
    \end{align}
    which implies that $\lambda(t)$ is constant. Hence, the equations \eqref{equation-t-beta-derivative} imply $\dot{\tilde{x}}^0(t) \equiv 0$ and we have reached a contradiction.
\end{proof}

\begin{lemma}[Propagation of singularities]\label{lemma-propagate-zeros}
    Under the assumptions of Theorem \ref{thm-annulus-embedding}, if $\det (D\tilde{G}(t, 0)) = 0$ for some $t\in \mathbb{R}$, then $\det(D\tilde{G}(t - m, 0)) = 0$ for all $m \in \mathbb{N}$.
\end{lemma}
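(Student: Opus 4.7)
The plan is to induct on $m$, so it suffices to prove the single-step claim $\det D\tilde G(t_0,0)=0 \Rightarrow \det D\tilde G(t_0-1,0)=0$. Writing $u(t):=\dot x^\ast(t)$ and $w(t):=\partial_b \tilde x^0(t)$, the determinant unfolds as
\[
W(t) := \det D\tilde G(t,0) = u(t)\,w(t-1) - w(t)\,u(t-1),
\]
and both $u$ and $w$ solve a common linear DDE $\dot y = A(t)y + B(t)y(t-1) + c(t)$ with the coefficients \eqref{linear-coefficients}, where the forcing vanishes ($c\equiv 0$) in the nonhyperbolic continuation of Lemma \ref{lemma-beta-r-continuation} (because $\tilde r'(0)=0$) and equals $c(t)=u(t)/r_\ast$ in the hyperbolic continuation of Lemma \ref{lemma-beta-hyperbolic-continuation}. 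These observations set up the algebraic reduction.

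Since $P\gamma_\ast$ is a $C^k$-regular Jordan curve, the tangent $(u(t_0),u(t_0-1))$ is nonzero, so $W(t_0)=0$ yields a unique $\lambda\in\mathbb{R}$ with $(w(t_0),w(t_0-1)) = \lambda(u(t_0),u(t_0-1))$. Setting $\tilde w := w - \lambda u$, the corrected function solves the same inhomogeneous DDE as $w$ and satisfies $\tilde w(t_0) = \tilde w(t_0-1) = 0$. Substituting $w = \tilde w + \lambda u$ into the expression for $W(t_0-1)$ and cancelling the $\lambda$-terms produces the central identity
\[
W(t_0-1) = u(t_0-1)\,\tilde w(t_0-2).
\]
If $u(t_0-1)=0$ the claim is immediate. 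Otherwise, evaluating the DDE for $\tilde w$ at $t_0-1$ gives $\tilde w(t_0-2) = [\dot{\tilde w}(t_0-1)-c(t_0-1)]/B(t_0-1)$, so the single-step propagation is equivalent to the pointwise identity $\dot{\tilde w}(t_0-1) = c(t_0-1)$.

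The principal obstacle is forcing this derivative constraint, which cannot be extracted from the two scalar vanishings alone. In the nonhyperbolic case $c\equiv 0$, so the DDE at $t_0$ yields $\dot{\tilde w}(t_0) = 0$ automatically, upgrading the simple zero at $t_0$ to a double zero; by the simple-zero property of nonzero elements of the central eigenspace $E^\mathrm{c}$ (\cite[Theorem 5.1]{MPNu13}, as invoked in the proof of Lemma \ref{lemma-beta-r-continuation}), this forces $\tilde w\equiv 0$, hence $w=\lambda u$, which would contradict Lemma \ref{lemma-not-identically-zero} unless $W\equiv 0$; thus the hypothesis of the lemma is vacuously satisfied in that case. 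In the hyperbolic case $c(t_0)$ need not vanish and the double-zero trick is unavailable; I plan to close the proof by combining the monodromy structure of Proposition \ref{proposition-spectrum} (the one-dimensional $\mu_\mathrm{c}$-eigenspace of $L$) with the quasi-periodicity $w(t+p_\ast) = w(t) - \tilde p'(0)\,u(t)$ inherited from the hyperbolic continuation and the two-zeros-per-period bound from Proposition \ref{proposition-spectrum}, to conclude that the pair of vanishings $\tilde w(t_0)=\tilde w(t_0-1)=0$ is inconsistent with $\dot{\tilde w}(t_0-1)\neq c(t_0-1)$. Making this rigidity argument precise is the technical core of the lemma.
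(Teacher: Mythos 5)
Your algebraic setup is sound and reproduces the skeleton of the paper's \emph{Step 1}: writing $\tilde w=\partial_b\tilde x^0-\lambda\dot{\tilde x}^0$, the one-step propagation reduces to showing $\tilde w(t_0-2)=0$. Your disposal of the nonhyperbolic case is also a legitimate shortcut: there the forcing vanishes, $\tilde w$ is a homogeneous solution with segment in $E^{\mathrm{c}}$, the two vanishings force the non-simple zero $\tilde w(t_0)=\dot{\tilde w}(t_0)=0$, and the simple-zero property of \cite[Theorem 5.1]{MPNu13} (used in the same way in the proof of Lemma \ref{lemma-beta-r-continuation}) gives $\tilde w\equiv 0$, contradicting the linear independence of $\dot x^\ast_0$ and $\Psi_0$. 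That the lemma becomes vacuous there is consistent with Theorem \ref{thm-annulus-embedding}.

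The gap is the hyperbolic case, which is the generic one and which you leave as a plan aimed at the wrong mechanism. The identity $\dot{\tilde w}(t_0-1)=c(t_0-1)$ you hope to force is not a spectral rigidity statement; in the paper it drops out of a \emph{topological} fact your proposal never invokes: by Lemma \ref{lemma-annulus-homeomorphism}, $\tilde G$ is a homeomorphism, so $\det(D\tilde G(\cdot,0))$ cannot change sign, so every zero of the determinant is a critical point, $\nabla\det(D\tilde G(t_0,0))=0$. The $t$-component of this tangency condition computes, via \eqref{equation-t-beta-derivative}, to
\[
\partial_t\det(D\tilde G(t_0,0))=B(t_0-1)\,\dot{\tilde x}^0(t_0)\,\bigl(\partial_b\tilde x^0(t_0-2)-\lambda\,\dot{\tilde x}^0(t_0-2)\bigr)=0,
\]
which is exactly the missing input $\tilde w(t_0-2)=0$ whenever $\dot{\tilde x}^0(t_0)\neq 0$; no monodromy or quasi-periodicity argument is needed. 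Moreover, even granting this, the case where the singularity sits at an extremum of $\tilde x^0$ still requires a separate argument (the paper's \emph{Step 2}): there one needs the $b$-component of the tangency condition, the local invertibility of the amplitude (again from Lemma \ref{lemma-annulus-homeomorphism}) to kill $\partial_b^2\tilde x^0(0)$, the sign inequality \eqref{product-property}, and second-order expansions of the determinant on both sides of the extremum. None of this appears in your outline, so the single-step claim — and hence the lemma — remains unproved as written.
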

\begin{proof}
    \textit{Step 1:}
    If $\det (D\tilde{G}(t, 0)) = 0$, then there exists $\lambda \in \mathbb{R}$ such that
    \begin{align}\label{eq-parallel-derivatives}
        (\partial_{b} \tilde{x}^0(t), \partial_{b} \tilde{x}^0(t - 1)) = \lambda (\dot{\tilde{x}}^0(t), \dot{\tilde{x}}^0(t - 1)),
    \end{align}
    and $\dot{\tilde{x}}^0(t)(\partial_{b} \tilde{x}^0(t - 2) - \lambda \dot{\tilde{x}}^0(t - 2)) = 0$.
    
    Indeed, notice that
    \begin{align}\label{definition-determinant-DG}
        D\tilde{G}(t, 0) = \left(
            \begin{array}{cc}
               \dot{\tilde{x}}^0(t)  & \dot{\tilde{x}}^0(t - 1) \\
               \partial_{b} \tilde{x}^0(t) & \partial_{b} \tilde{x}^0(t - 1)
            \end{array}\right),
    \end{align}
    therefore, $\det (D\tilde{G}(t, 0)) = 0$ implies that the $t$- and ${b}$-derivatives of $(\tilde{x}^{b}(t), \tilde{x}^{b}(t - 1))$ are parallel as in \eqref{eq-parallel-derivatives}. Moreover, by Lemma \ref{lemma-annulus-homeomorphism}, $\tilde{G}$ is a homeomorphism and any singularity of $\det(D\tilde{G}(t;0))$ must be a tangency, i.e., $\nabla \det (D\tilde{G}(t, 0)) = 0$. In particular, using \eqref{equation-t-beta-derivative}, the condition $\partial_t \det \tilde{G}(t_0, 0) = 0$ translates into
    \begin{align}
        \begin{split}
            \partial_t\det (D\tilde{G}(t; 0)) &= B(t - 1) \dot{\tilde{x}}^0(t) (\partial_{b} \tilde{x}^0(t - 2) - \lambda\dot{\tilde{x}}^0(t - 2)) = 0,
        \end{split}
    \end{align}
    with $B(t) \neq 0$ given by \eqref{linear-coefficients}, which proves \textit{Step 1}. 
    
    Notice that, by \textit{Step 1}, any singularity propagates backwards so that $\det(D\tilde{G}(t - m, 0)) = 0$ as long as $\dot{\tilde{x}}^0(t - n) \neq 0$ for all $n \leq m$. In \textit{Step 2}, we show that the sequence of singularities extends past time-extrema of the solution.
    
    \textit{Step 2:} If $\dot{\tilde{x}}^0(t) = 0$ and $\det (D\tilde{G}(t, 0)) = 0$, then $\det (D\tilde{G}(t- 1, 0)) = 0$.
    
    Let us assume that $\det (D\tilde{G}(0, 0)) = 0$, then we show that $\det (D\tilde{G}(-1, 0)) = 0$. The situation at the time of depth $\det (D\tilde{G}(q, 0)) = 0$ is analogous. Since $\dot{\tilde{x}}^0(0) = 0$, it follows from $\det(D\tilde{G}(0, 0)) = 0$ that $\partial_{b} \tilde{x}^0(0) = 0$. Moreover, since
    \begin{align}
        \begin{split}
            \det (D\tilde{G}(0, 0)) &= \det \left(
        \begin{array}{cc}
            0 & \dot{\tilde{x}}^0(-1) \\
            0 & \partial_{b} \tilde{x}^0(-1)
        \end{array}
        \right) \\
        &= \det \left(
        \begin{array}{cc}
            \dot{\tilde{x}}^0(1) & 0 \\
            \partial_{b} \tilde{x}^0(1) & 0
        \end{array}
        \right)\\
        &= \det(D\tilde{G}(1, 0))
        \end{split}
    \end{align}
    we immediately obtain that $\det (D\tilde{G}(1, 0)) = 0$. Since $\tilde{x}^0(0)$ is a maximum, we have the expansion
    \begin{align}
        a({b}) = \partial_{b}^2 \tilde{x}^0(0) \frac{{b}^2}{2} + O({b}^3), \quad \text{as } b \to 0.
    \end{align}
    It follows from Lemma \ref{lemma-annulus-homeomorphism} that the amplitude $a(b)$ is locally $C^0$-invertible, which requires $\partial_{b}^2 \tilde{x}^0(0) = 0$. We recall from \textit{Step 1} that $\nabla\det(D\tilde{G}(0, 0)) = 0$, therefore,
    \begin{align}
        \begin{split}
            \partial_{b} \det (D\tilde{G}(0, 0)) &= -\dot{\tilde{x}}^0(1) (\lambda^2 B(0)\dot{\tilde{x}}^0(- 1) - \partial_{b}^2 \tilde{x}^0(0)) \\
            &= -\dot{\tilde{x}}^0(1) \lambda^2 B(0) \dot{\tilde{x}}^0(-1)\\
            &= 0.
        \end{split}
    \end{align}
    By the inequality \eqref{product-property}, we have that $\dot{\tilde{x}}^0(1) \dot{\tilde{x}}^0(-1) < 0$, hence, we conclude that $\lambda = 0$ in \textit{Step 1}, which implies 
    \begin{align}
        \partial_{b} \tilde{x}^0(1) = \partial_{b} \tilde{x}^0(0) = \partial_{b} \tilde{x}^0(-1) = 0.
    \end{align}
    Finally, we use \eqref{equation-t-beta-derivative} to expand $\det(D\tilde{G}(1 + t, 0))$ and $\det(D\tilde{G}(t, 0))$ in $t$, so that
    \begin{align}
        \begin{split}
            \det(D\tilde{G}(1 + t, 0)) &= B(0) \left(\partial_{b} \dot{\tilde{x}}^0(-1) - \frac{\tilde{r}'(0)}{\tilde{r}_\ast} \dot{\tilde{x}}^0(-1)\right)\dot{\tilde{x}}^0(1)\frac{t^2}{2} + O(t^3),
        \end{split}\\
        \begin{split}
            \det(D\tilde{G}(t, 0)) &= B(0) \left(\partial_{b} \dot{\tilde{x}}^0(-1) - \frac{\tilde{r}'(0)}{\tilde{r}_\ast} \dot{\tilde{x}}^0(-1)\right)\dot{\tilde{x}}^0(-1)\frac{t^2}{2} + O(t^3),
        \end{split}
    \end{align}
    as $t \to 0$. By the inequality \eqref{product-property}, for small $t$, we conclude that $\det (D\tilde{G}(1 + t, 0))$ and $\det (D\tilde{G}(t, 0))$ have opposite signs unless
    \begin{align}
        \partial_{b} \dot{\tilde{x}}^0(-1) - \frac{\tilde{r}'(0)}{\tilde{r}_\ast} \dot{\tilde{x}}(-1) = B(-1) \partial_{b} \tilde{x}^0(-2) = 0.
    \end{align}
    Thus, we obtain $\partial_{b} \tilde{x}^0(-2) = \partial_{b} \tilde{x}^0(-1) = 0$ and $\det D\tilde{G}(-1, 0) = 0$, as claimed.
    
\end{proof}

\begin{proof}[Proof of Theorem \ref{thm-annulus-embedding}]
    We proceed by contradiction. Suppose that $\det (D\tilde{G}(t_0,0)) = 0$ for some $t_0\in \mathbb{R}$. By Lemma \ref{lemma-propagate-zeros}, we have that $\det (D\tilde{G}(t_0 - n, 0)) = 0$ for all $n \in \mathbb{N}_0$. We consider two scenarios. First, if the minimal period $p_\ast$ of $\tilde{x}^0(t)$ is irrational, then the points $\{(\tilde{x}^0(t_0 - n), \tilde{x}^0(t_0 - n - 1))\}_{n\in \mathbb{N}_0}$ are dense on $P\tilde{\gamma}_0$ and, by continuity, we have that $\det(D\tilde{G}(t, 0)) \equiv 0$, in contradiction to Lemma \ref{lemma-not-identically-zero}. Second, if $p_\ast$ is rational, then the set $\{(\tilde{x}^0(t_0 - n), \tilde{x}^0(t_0 - n - 1))\}_{n\in \mathbb{N}_0}$ is finite. Hence, there exist $m, M\in\mathbb{N}$ such that $\tilde{x}^0(t - M) = \tilde{x}^0(t - mp) = \tilde{x}^0(t)$. In particular, the $M$-dimensional vector
    \begin{align}
        u^j(t) := \partial_{b} \tilde{x}^0(t - j), \quad j = 0, \dots, M - 1,
    \end{align}
    solves the initial value problem 
    \begin{align}\label{ODE-IVP}
        \begin{split}
            \dot u^j(t) &= A(t - j) u^j(t) + B(t - j)u^{j + 1}(t) + \frac{\tilde{r}'(0)}{r_{\ast}}\dot{\tilde{x}}^0(t - j),\quad j \mod M,\\
            u^j(t_0) &= \lambda \partial_{b} \tilde{x}^0(t_0 - j),
        \end{split}
    \end{align}
    with coefficients \eqref{linear-coefficients}. However, direct substitution shows that \eqref{ODE-IVP} is solved uniquely, by
    \begin{align}
        u^j(t) = \left(\frac{\tilde{r}'(0)}{r_{\ast}}(t - t_0) + \lambda\right)\dot{\tilde{x}}^0(t - j),\quad j \mod M,
    \end{align}
    following the argument above, we conclude that
    \begin{align}
        \left(\partial_{b} \tilde{x}^0(t), \partial_{b} \tilde{x}^0(t - 1)\right) = \left(\frac{\tilde{r}'(0)}{r_{\ast}}(t - t_0) + \lambda\right)\left(\dot{\tilde{x}}^0(t), \dot{\tilde{x}}^0(t - 1)\right).
    \end{align}
    Therefore, $\det \tilde{G}(t, 0)\equiv 0$, in contradiction to Lemma \ref{lemma-not-identically-zero}.
\end{proof}

\begin{proof}[Proof of Theorem \ref{thm1} and Theorem \ref{thm2}]
    Consider a point $({x}^\ast_0; r_\ast) \in \mathcal{B}$ such that $a_\ast := {x}^\ast(0)$ is the amplitude of the periodic solution of \eqref{reference-dde} with initial condition ${x}^\ast_0$ at delay $r_\ast$. Corollary \ref{corollary-local-amplitude-continuation} shows that there exists a local time-amplitude chart $\beta_{\mathrm{loc}}$ of $\mathcal{B}$. Moreover, the domain of the map $\beta_{\mathrm{loc}}$ can always be enlarged if any of the boundaries belongs to $\mathcal{B}$. We define the map $\beta$ in Theorem \ref{thm2} to be the maximal extension of $\beta_{\mathrm{loc}}$. By construction, $\beta$ is $C^k$-differentiable, and $\operatorname{ran} \beta$ is both open and closed in $\mathcal{B}$. Since $\mathcal{B}$ is connected, we conclude that $\beta$ is surjective.

    Finally, define $G(t, a) := \bar{P}\beta(t; a) = Px_t^a$ and consider the commutative diagram
    \begin{equation}\label{cd-thm1}
        \begin{tikzcd}
        \mathbb{A} \arrow[r, "\beta"] \arrow[dr, "G"'] & \mathcal{B} \arrow[d, "\bar{P}"] \\
         & \mathcal{O}.
        \end{tikzcd}
    \end{equation}
    We highlight that the proof of Theorem \ref{thm-annulus-embedding} is independent of the value of $\varepsilon$ in the domain of ${b}$. Hence, $G$ is a $C^k$-diffeomorphism. Moreover, we have shown that $\beta$ and $\bar{P}$ are $C^k$-differentiable and surjective. Thus, $\beta$ and $\bar{P}$ are $C^k$-bijections with $C^k$-inverses
    \begin{align}
        \bar{P}^{-1} = \beta \circ G^{-1} \quad \text{and} \quad \beta^{-1} = G^{-1} \circ \bar{P}.
    \end{align}
\end{proof}

\begin{proof}[Proof of Theorem \ref{thm3}]
    By Theorem \ref{thm1} and Theorem \ref{thm2}, we consider a periodic branch $\mathcal{B}$ of the extended DDE \eqref{extended-DDE} with time-amplitude parametrization $(x^a_t; r(a))$. In particular, there exists a locally unique vector field on the cyclicity component $\mathcal{O} = \bar{P}\mathcal{B} \subset \mathbb{R}^2$ determined by time differentiation
    \begin{align}
        (x^a(t), x^a(t - 1)) \mapsto (\dot x^a(t), \dot x^a(t - 1)).
    \end{align}
    Next, we derive the specific form \eqref{planar-ODE}. By Theorem \ref{thm1}, $\mathcal{O}$ is a $C^k$-embedded annulus via the global map $G(t, a) = (x^a(t), x^a(t - 1))$ in \eqref{cd-thm1}. In other words, for all $u, v \in \mathcal{O}$ there exist $C^k$-time- and amplitude-maps $\tau(u, v)$, $\alpha(u, v)$ solving 
    \begin{align}\label{def-inv-G}
        (u, v) = \left(x^{\alpha(u,v)}(\tau(u, v)), x^{\alpha(u,v)}(\tau(u, v) - 1)\right).
    \end{align}
    Hence, $\alpha$ given by \eqref{def-inv-G} is precisely the map \eqref{def-alpha}. Moreover, choosing $(u(t), v(t)) = (x^a(t), x^a(t - 1))$ we can always write
    \begin{align}
        \begin{split}
            \dot u(t) &= \dot x^a(t)\\
            &= r(\alpha(u,v))f(u,v).
        \end{split}
    \end{align}
    Analogously, we obtain that
    \begin{align}
        \begin{split}
            \dot v(t) &= \dot x^a(t - 1)\\
            &= \dot x^{\alpha(u,v)}(\tau(u, v) - 1)\\
            &= r(\alpha(u, v))f(x^{\alpha(u,v)}(\tau(u,v) - 1), x^{\alpha(u,v)}(\tau(u,v) - 2)),
        \end{split}
    \end{align}
    therefore, we define
    \begin{align}
        g(u, v) := f(x^{\alpha(u,v)}(\tau(u,v) - 1), x^{\alpha(u,v)}(\tau(u,v) - 2)).
    \end{align}
    Notice that the regularity of $g$ is inherited from $G$. Furthermore, differentiating the identity \eqref{def-inv-G}, we obtain
    \begin{align}\label{eq-u-diff}
        (\partial_u \tau, \partial_u \alpha) = \frac{1}{\det(DG(\tau, \alpha))} (\partial_a x^\alpha(\tau - 1), -\dot x^\alpha(\tau - 1)).
    \end{align}
    Thus, we may replace \eqref{eq-u-diff} into $\partial_1 g$ so that
    \begin{align}
        \begin{split}
            \partial_1 g(u, v) &= (\partial_u \tau \dot x^\alpha(\tau - 1) + \partial_u \alpha \partial_a x^\alpha(\tau - 1))\partial_1 f(x^\alpha(\tau - 1), x^\alpha(\tau - 2)) \\
            &\quad + (\partial_u \tau \dot x^\alpha(\tau - 2) + \partial_u \alpha \partial_a x^\alpha(\tau - 2))\partial_2 f(x^\alpha(\tau - 1), x^\alpha(\tau - 2))\\
            &= -\frac{\det(DG(\tau - 1, \alpha))}{\det(DG(\tau, \alpha))} \partial_2 f(x^\alpha(\tau - 1), x^\alpha(\tau - 2)).
        \end{split}
    \end{align}
    Since $G$ is a diffeomorphism,
    \begin{align}
        \frac{\det(DG(\tau - 1, \alpha))}{\det(DG(\tau, \alpha))} > 0,
    \end{align}
    and we conclude that $\partial_1 g(u, v) \partial_2f(u, v) < 0$, as claimed.
\end{proof}

\section{Proof of Lemma \ref{lemma-annulus-homeomorphism}}\label{Sec5}

In this section, we prove the homeomorphism in Lemma \ref{lemma-annulus-homeomorphism}. Our arguments are a discussion on intersections of continuous families of Jordan curves in $\mathbb{R}^2$. Let $\gamma_\ast$ and $\gamma_\dagger$ be two periodic orbits of the scalar DDE \eqref{reference-dde}. We say that the $C^k$-Jordan curves $P\gamma_\ast, P\gamma_\dagger \subset \mathbb{R}^2$ have a \emph{crossing} if $P\gamma_\ast$ has points both in the inside and the outside of $P\gamma_\dagger$. Converserly, we say that $P\gamma_\ast$ and $P\gamma_\dagger$ have a \emph{tangency} if their intersection is nonempty and they do not have a crossing; see Figure \ref{fig3}. Given a continuous one-parameter family of projected periodic orbits $P\tilde{\gamma}_b$, a crossing is \emph{stable}, that is, if $P\gamma^\ast$ and $P\tilde{\gamma}_{b_\ast}$ have a crossing, then there exists a $\delta > 0$ such that $P\gamma^\ast$ and $P\tilde{\gamma}_{b}$ have a crossing for all $|b - b_\ast| < \delta$. In contrast, tangencies are not stable and can be destroyed by small deformations. We begin the discussion by considering the single case in which the family $P\gamma^\dagger_b$ crosses from the exterior of $P\gamma^\ast$ to the interior by intersecting at a $b = b_\ast$, only.

\begin{figure}
\centering
\includegraphics[width = 0.87\textwidth]{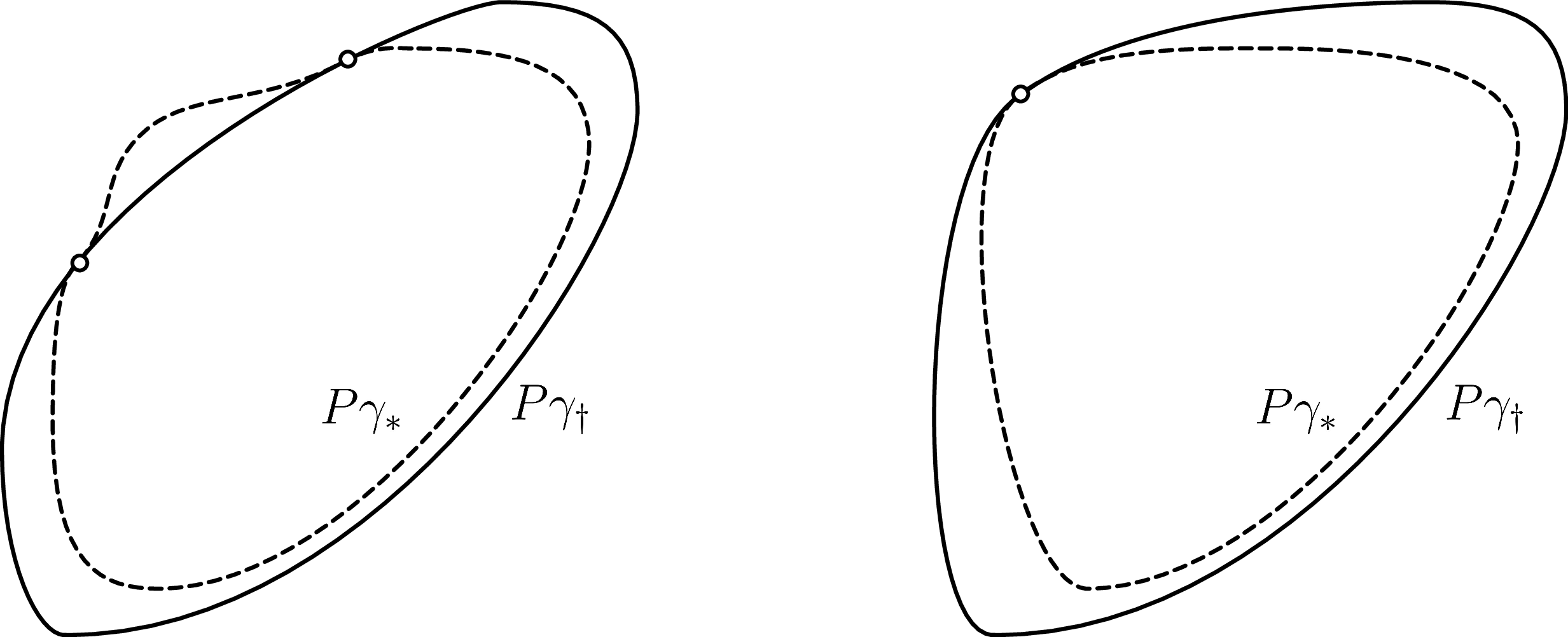}
\caption{Left: \em Crossing with two tangent intersection points (open dots) where the Jordan curve $P\gamma_\ast$ changes the connected components defined by $P\gamma_\dagger$. Right: Tangency, although the curves intersect (open dot), $P\gamma_\ast$ is fully contained in the closure of the interior component of $P\gamma_\dagger$.}
\label{fig3}
\end{figure}

\begin{lemma}\label{lemma-no-degenerate-intersections}
    Let $x^\ast(t)$ and $x^\dagger(t)$ denote two periodic solutions of the DDE \eqref{reference-dde} at delays $r_\ast$ and $r_\dagger$ with minimal periods $p_\ast$ and $p_\dagger$, and orbits $\gamma_\ast$ and $\gamma_\dagger$, respectively. If
    \begin{align}\label{degenerate-intersection}
        P\gamma_\ast = P \gamma_\dagger,
    \end{align}
    then there exists an $m \in \mathbb{Z}$ such that
    \begin{align}\label{degenerate-intersection-consequences}
        x^\ast(t) = x^\dagger((1 + mp_\dagger)t), \quad r_\ast = (1 + mp_\dagger)r_\dagger, \quad \text{and} \quad p_\ast = \frac{p_\dagger}{1 + mp_\dagger}.
    \end{align}
\end{lemma}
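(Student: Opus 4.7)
The plan is to produce a $C^k$-reparametrization $\phi: \mathbb{R}\to\mathbb{R}$ linking the two periodic trajectories, show via the DDE that $\phi$ is the affine map $t\mapsto ct$ with slope $c:=r_\ast/r_\dagger$, and then deduce from the second coordinate of the projection identity that $c-1$ must be a period of $x^\dagger$. After a time translation I would normalize both $x^\ast$ and $x^\dagger$ so that their maxima occur at $t=0$. By Lemma \ref{lemma-basic-periodic}, each maximum is attained only once per minimal period, so the two points $(x^\ast(0),x^\ast(-1))$ and $(x^\dagger(0),x^\dagger(-1))$ must both equal the unique point of maximal first coordinate along the common Jordan curve $J:=P\gamma_\ast=P\gamma_\dagger$. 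By \cite[Theorem 2.1]{MPSe962}, each of the two parametrizations $t\mapsto Px^\ast_t$ and $t\mapsto Px^\dagger_t$ is a $C^k$-embedding onto $J$ with $C^k$-inverse, whose composition yields a $C^k$-diffeomorphism between the two parameter circles; a continuous lift gives a $C^k$-function $\phi: \mathbb{R}\to\mathbb{R}$ with $\phi(0)=0$ and $Px^\ast_t=Px^\dagger_{\phi(t)}$ for every $t$.

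Next, differentiating the first coordinate identity $x^\ast(t)=x^\dagger(\phi(t))$ and using the DDE at the two delays yields
\[
r_\ast f(x^\ast(t),x^\ast(t-1)) \;=\; r_\dagger f(x^\dagger(\phi(t)),x^\dagger(\phi(t)-1))\,\dot\phi(t),
\]
and the two evaluations of $f$ agree by the second coordinate identity. Hence $(r_\ast-r_\dagger\dot\phi(t))\,f(Px^\ast_t)=0$ for every $t$, and since $f\circ Px^\ast_\cdot=\dot x^\ast/r_\ast$ vanishes only on the discrete set of extrema of $x^\ast$, continuity of $\dot\phi$ extends $\dot\phi\equiv c:=r_\ast/r_\dagger$ to all of $\mathbb{R}$. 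Thus $\phi(t)=ct$. Substituting back into the second coordinate identity gives $x^\dagger(ct-1)=x^\ast(t-1)=x^\dagger(c(t-1))=x^\dagger(ct-c)$, so $x^\dagger(s+(c-1))=x^\dagger(s)$ for all $s\in\mathbb{R}$, forcing $c-1=mp_\dagger$ for some $m\in\mathbb{Z}$.

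The first two identities of \eqref{degenerate-intersection-consequences} then follow immediately from $c=1+mp_\dagger$, and the third comes from the observation that $x^\ast(t+p)=x^\ast(t)$ if and only if $cp\in p_\dagger\mathbb{Z}$, so the minimal positive solution is $p_\ast=p_\dagger/(1+mp_\dagger)$, the sign $1+mp_\dagger>0$ being enforced by the positivity of $p_\ast$. The single technical obstacle is verifying that $\phi$ is globally defined and $C^k$-smooth, which reduces entirely to the Mallet-Paret-Sell embedding of each of $\gamma_\ast$ and $\gamma_\dagger$ as $C^k$-Jordan curves; once $\phi$ is in hand the rest of the argument is a short algebraic consequence of the DDE.
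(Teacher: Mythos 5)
Your proposal is correct and follows essentially the same route as the paper: both construct a $C^k$-reparametrization from the Mallet-Paret--Sell embeddings, differentiate the first-coordinate identity against the DDE to force constant slope $r_\ast/r_\dagger$, and read off $1+mp_\dagger$ from the delayed coordinate. Your version is slightly more explicit about normalizing at the common maximum and about handling the zeros of $f$ along the orbit before dividing, but the substance is identical (including the shared, unaddressed orientation convention behind the sign of $1+mp_\dagger$).
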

\begin{proof}
    Recall that $P\gamma_\ast$ and $P\gamma_\dagger$ are $C^k$-embedded curves in $\mathbb{R} ^2$. Since the images coincide, there exists a $C^k$-function $\tau(t)$ such that
    \begin{align}
        x^\ast(t) &= x^\dagger(\tau(t)),
    \end{align}
    in particular, we have that
    \begin{align}\label{definition-tau}
        \tau(t - p_\ast) = \tau(t) - p_\dagger \quad \text{and} \quad \tau(t - 1) = \tau(t) - 1 - mp_\dagger,
    \end{align}
    for some $m \in \mathbb{Z}$. Differentiating $x^\ast(t)$, we obtain
    \begin{align}
        \dot{x}^\ast (t) = \dot \tau(t) r_\dagger f(x^\ast(t), x^\ast(t - 1)),
    \end{align}
    which shows $\dot \tau(t) = r_\ast/r_\dagger$. Thus, without loss of generality, we can choose
    \begin{align}
        \tau(t) = \frac{r_\ast}{r_\dagger}t.
    \end{align}
    Using \eqref{definition-tau} yields
    \begin{align}
        r_\ast p_\ast = r_\dagger p_\dagger \quad \text{and} \quad r_\ast = (1 + mp_\dagger) r_\dagger.
    \end{align}
    Hence, we obtain the identities \eqref{degenerate-intersection-consequences}.
\end{proof}

\begin{lemma}\label{lemma-constant-product}
    Let $\gamma_\ast$ denote a periodic orbit of the DDE \eqref{reference-dde} at delay $r_\ast \neq 0$ with minimal period $p_\ast$. Let $\tilde{\gamma}_{b}$ denote the orbits of the ${b}$-continuation of any periodic solution of DDE \eqref{reference-dde} as per Lemma \ref{lemma-beta-hyperbolic-continuation} and Lemma \ref{lemma-beta-r-continuation}. If there exists a $\delta > 0$ such that
    \begin{align}\label{nonempty-intersection}
        P\gamma_\ast \cap P\tilde{\gamma}_{b} \neq \emptyset, \quad \text{for all }{b} \in (-\delta, \delta),
    \end{align}
    then $\tilde{p}({b})\tilde{r}({b})$ is constant for all ${b} \in (-\delta, \delta)$ and
    \begin{align}
        \text{either }\tilde{r}'({b}) = \tilde{p}'({b}) = 0,  &\quad \text{for all } |{b}| < \delta \quad \text{or} \quad p_\ast \in \mathbb{R} \setminus \mathbb{Q}. 
    \end{align}
\end{lemma}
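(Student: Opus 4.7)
The plan is to combine the persistent intersection hypothesis with Lemma \ref{lemma-no-degenerate-intersections}, which simultaneously gives a product identity and an integer rescaling parameter. The first goal is to upgrade the weak hypothesis $P\gamma_\ast \cap P\tilde\gamma_b \neq \emptyset$ to the stronger equality of curves $P\gamma_\ast = P\tilde\gamma_b$ for all $b \in (-\delta, \delta)$. Once that is in hand, Lemma \ref{lemma-no-degenerate-intersections} yields an integer $m_b \in \mathbb{Z}$ with $\tilde r(b) = r_\ast/(1 + m_b \tilde p(b))$, from which the product constancy $\tilde p(b)\tilde r(b) = p_\ast r_\ast$ is immediate.

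To argue for the curve equality, I would inspect the shared tangent structure. At any common point $(u, v) = (x^\ast(t^\ast), x^\ast(t^\ast - 1)) = (\tilde x^b(\tilde t), \tilde x^b(\tilde t - 1))$, the tangent vectors to the two Jordan curves are
\begin{align*}
  r_\ast \, (f(u, v),\; f(v, x^\ast(t^\ast - 2)))
  \quad \text{and} \quad
  \tilde r(b) \, (f(u, v),\; f(v, \tilde x^b(\tilde t - 2))).
\end{align*}
They are either non-parallel, yielding a transverse crossing that is stable under variations of $b$, or parallel. In the parallel (tangency) case, the strict monotonicity $\partial_2 f \neq 0$ collapses the parallelism into the scalar equality $x^\ast(t^\ast - 2) = \tilde x^b(\tilde t - 2)$. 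Iterating this identity backwards through the DDE structure, in the same spirit as the propagation argument of Lemma \ref{lemma-propagate-zeros}, should produce a sequence $x^\ast(t^\ast - k) = \tilde x^b(\tilde t - k)$ for all $k \in \mathbb{N}$. In the rational case this sequence is a finite cycle and, combined with the Mallet--Paret--Sell nesting at fixed delay, should force $P\gamma_\ast = P\tilde\gamma_b$; in the irrational case the sequence is dense on $P\gamma_\ast$ and $C^0$-continuity closes the gap.

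Once $P\gamma_\ast = P\tilde\gamma_b$ holds throughout $(-\delta, \delta)$, the integer $m_b = (r_\ast/\tilde r(b) - 1)/\tilde p(b)$ is a continuous function of $b$, hence locally constant. Substituting back yields $\tilde r(b) = r_\ast(1 - m_b p_\ast)$ and $\tilde p(b) = p_\ast/(1 - m_b p_\ast)$, both locally constant, so $\tilde r'(b) = \tilde p'(b) = 0$. The \emph{main obstacle}, and the probable source of the rational/irrational dichotomy in the statement, lies in the curve-equality upgrade: in the irrational case the tangency-propagation chain is dense but does not cover $P\gamma_\ast$ pointwise, so coincidence of the curves can fail at degenerate configurations of the one-parameter family. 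In such residual scenarios only the weaker product identity $\tilde p(b)\tilde r(b) = p_\ast r_\ast$ survives, without forcing vanishing of the individual derivatives.
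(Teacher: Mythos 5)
There is a genuine gap: your derivation of the product identity $\tilde{p}(b)\tilde{r}(b)\equiv\mathrm{const}$ is routed entirely through the curve equality $P\gamma_\ast = P\tilde{\gamma}_b$, and, as you yourself concede in the last paragraph, that equality cannot be established in general (indeed, if it held for all $b$ the whole lemma would be trivial, since then $\tilde r$ and $\tilde p$ would be outright constant and the rational/irrational dichotomy would be vacuous). So in exactly the ``residual scenarios'' where the lemma has content, your argument produces nothing. The idea you are missing is the time rescaling symmetry \eqref{def-resc-sym}: the paper assumes $\tilde p\tilde r$ is \emph{not} constant, observes that the rescaled delay maps $\tilde r^{(m)}(b) = (1+m\tilde p(b))\tilde r(b) = \tilde r(b) + m\,\tilde p(b)\tilde r(b)$ then have oscillation $\mathcal{R}(m)\to\infty$ as $m\to\infty$, and hence for some finite $m_\ast$ finds $b_\ast$ and $n_\ast$ with $(1+n_\ast p_\ast)r_\ast = (1+m_\ast\tilde p(b_\ast))\tilde r(b_\ast)$. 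The rescaled copies of $x^\ast$ and $\tilde x^{b_\ast}$ then solve \eqref{reference-dde} at the \emph{same} delay with unchanged planar projections, so the fixed-delay nesting property \cite[Lemma 5.7]{MPSe962} forces $P\gamma_\ast = P\tilde\gamma_{b_\ast}$, whence Lemma \ref{lemma-no-degenerate-intersections} gives $\tilde p(b_\ast)\tilde r(b_\ast) = p_\ast r_\ast$; running this on every open subinterval and using continuity yields constancy of the product with no need for curve equality on all of $(-\delta,\delta)$. Your tangency analysis also does not close: in the transverse case you note the crossing is stable but draw no contradiction (two orbits at \emph{different} delays crossing transversally is precisely what must be excluded, and fixed-delay nesting does not apply until the delays are matched by rescaling), and your backward propagation $x^\ast(t^\ast-k)=\tilde x^b(\tilde t-k)$ requires tangency at each newly produced common point, which is not automatic from tangency at the first one alone.

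For the second assertion, the paper's mechanism is again different from what you sketch: assuming $\tilde p'(0)\neq 0$ and $p_\ast\in\mathbb{Q}$, one picks $b_\ast$ with $\tilde p(b_\ast)\in\mathbb{Q}$ and a common integer $M$ so that both solutions are $M$-periodic in the rescaled time; both then generate periodic orbits of the same monotone cyclic feedback system \eqref{def-cycl-feed}, and the nesting property of \cite[Proposition 3.2]{MPSm90} (not the fixed-delay DDE nesting you invoke) contradicts $P\gamma_\ast\cap P\tilde\gamma_{b_\ast}\neq\emptyset$. You would need to supply this cyclic-feedback step; as written, your proposal neither proves the product identity in the problematic cases nor isolates why rationality of $p_\ast$ is the hypothesis that forces $\tilde r'=\tilde p'=0$.
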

\begin{proof}
    Recall that the rescaled functions
    \begin{align}
        x^\ast((1 + mp_\ast)t) \quad \text{and} \quad \tilde{x}^{b}((1 + m\tilde{p}({b}))t),\quad m\in\mathbb{Z},
    \end{align}
    solve the DDE \eqref{reference-dde} for the delays 
    \begin{align}
        r^{(m)}_\ast := (1 + mp_\ast) r_\ast\quad \text{and} \quad\tilde{r}^{(m)}({b}) := (1 + m\tilde{p}({b}))\tilde{r}({b}),
    \end{align}
    respectively. Let us assume that $\tilde{p}({b})\tilde{r}({b})$ is not constant for $|{b}| < \tilde{\delta}$. In particular, the quantity
    \begin{align}
        \mathcal{R}(m) :=\sup \left\{|\tilde{r}^{(m)}({b}_1)) - \tilde{r}^{(m)}({b}_2)| : |{b}_1|, |{b}_2| < \delta\right\}, 
    \end{align}
    grows to infinity with $m$. Therefore, for finite $m_\ast$, the height $\mathcal{R}(m_\ast)$ becomes larger than $p_\ast r_\ast$ and we can find ${b}_\ast \in (-\delta, \delta)$ and $n_\ast \in \mathbb{Z}$ such that
    \begin{align}\label{parameter-value-star}
        r_\ast^{(n_\ast)} = \tilde{r}^{(m_\ast)}({b}_\ast);
    \end{align}
    see Figure \ref{fig4}. In particular, both $x^\ast((1 + n_\ast p_\ast)t)$ and $\tilde{x}((1 + m_\ast \tilde{p}({b}_\ast))t)$ solve the DDE
    \eqref{reference-dde} at the delay \eqref{parameter-value-star}. If we denote the respective orbits by $\gamma^{n_\ast} \subset C$ and $\tilde{\gamma}^{m_\ast} \subset C$, then direct substitution shows that
    \begin{align}
        P\gamma^{n_\ast} = P\gamma_\ast \quad \text{and} \quad P\tilde{\gamma}^{m_\ast} = P\tilde{\gamma}_{{b}_\ast}.
    \end{align}
    Hence $P\gamma^{n_\ast} \cap P\tilde{\gamma}^{m_\ast} \neq \emptyset$, in contradiction to the nesting property \cite[Lemma 5.7]{MPSe962} unless $P\gamma_\ast = P\tilde{\gamma}_{b_\ast}$. Since our argument is valid on any open subset of $(-\delta, \delta)$, we conclude that either $\tilde{p}({b})\tilde{r}({b})$ is constant on $(-\delta, \delta)$ or $P\gamma_\ast = P\tilde{\gamma}_b$ on a dense subset of $(-\delta, \delta)$. By continuity, in either case, we obtain that $\tilde{p}({b})\tilde{r}({b})$ is constant for $|{b}| < \delta$.

    \begin{figure}
    \centering
    \includegraphics[width=0.87\textwidth]{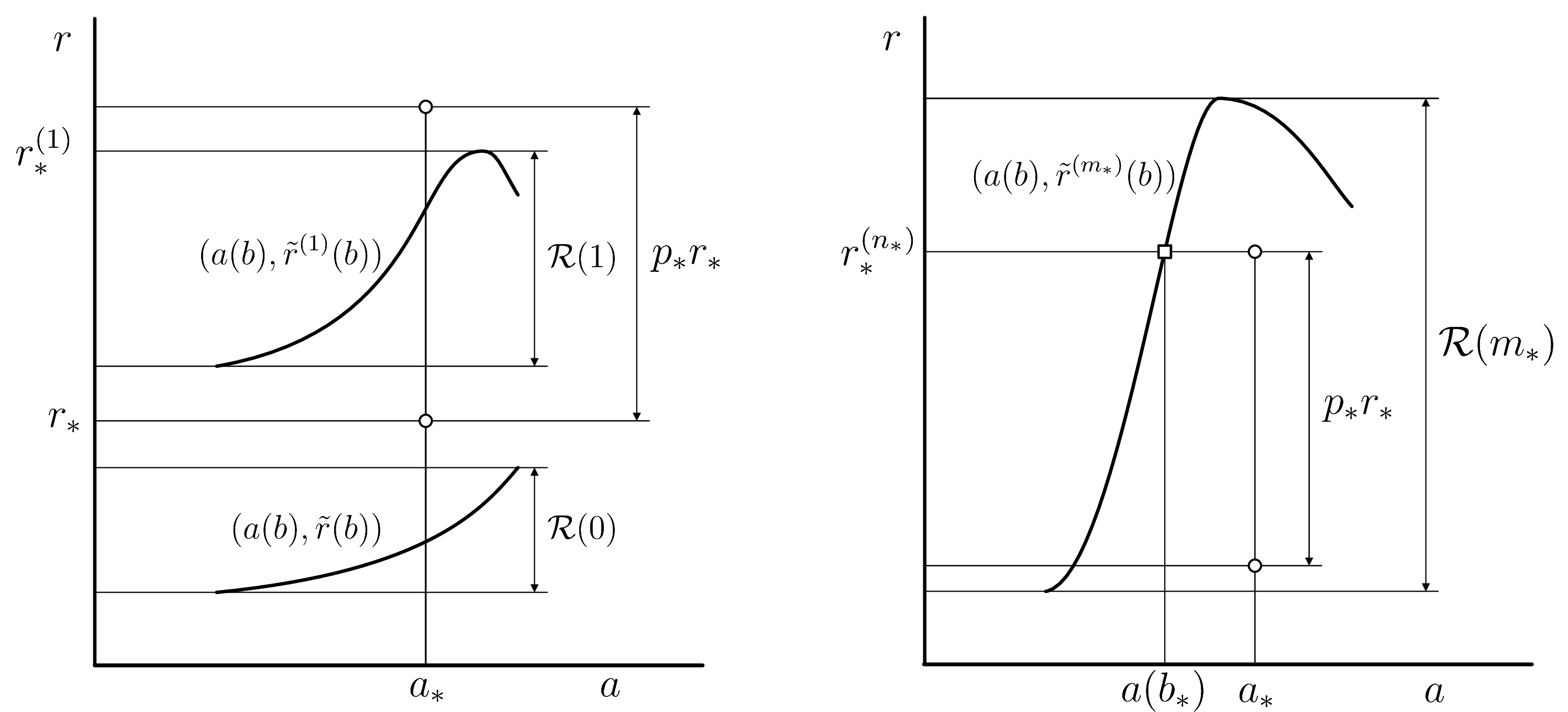}
    \caption{\em Left: Delay maps $\tilde{r}(b)$ associated to periodic branches of the extended DDE \eqref{extended-DDE} that appear by the rescaling symmetry \eqref{def-resc-sym}. Right: If $\tilde{p}(b)\tilde{r}(b)$ is not constant, then the height $\mathcal{R}(m)$ of the branches grows to infinity. Thus, it produces intersections of periodic orbits of the monotone delayed feedback DDE \eqref{reference-dde} at the same delay value $r_\ast^{(n_\ast)}$.}
    \label{fig4}
    \end{figure}
    
    To see the remainder, by contradiction, suppose that $\tilde{p}'(0) \neq 0$ and $p_\ast \in \mathbb{Q}$. Hence, we can find ${b}_\ast$ small such that $\tilde{p}({b}_\ast) \in \mathbb{Q}$ and there exists $M \in \mathbb{N}$ such that
    \begin{align}
        \tilde{x}^\ast(t - M) = \tilde{x}^\ast(t) \quad \text{and} \quad \tilde{x}^{{b}_\ast}(t - M) = \tilde{x}^{{b}_\ast}(t).
    \end{align}
    In particular, the $M$-vectors
    \begin{align}
        u^{\ast}_j(t) := \tilde{x}^{\ast}\left(\frac{t}{r_\ast} - j\right) \quad \text{and} \quad \tilde{u}_j(t) := \tilde{x}^{{b}_\ast}\left(\frac{t}{\tilde{r}({b}_\ast)} - j\right), \quad j = 0, \dots, M - 1,
    \end{align}
    are periodic solutions to the monotone cyclic feedback system
    \begin{align}
        \dot u_j(t) = f(u_j(t), u_{j+1}(t)), \quad j \mod M.
    \end{align}
    Moreover, we have that
    \begin{align}
        P\gamma_\ast = \left\{\left(u^{\ast}_0(t), u^{\ast}_1(t)\right) : t\in \mathbb{R}\right\} \quad \text{and} \quad P\tilde{\gamma}_{{b}_\ast} = \left\{\left(\tilde{u}_0(t), \tilde{u}_1(t)\right) : t\in \mathbb{R}\right\},
    \end{align}
    therefore, $P\gamma_\ast \cap P\tilde{\gamma}_{{b}_\ast} \neq \emptyset$ contradicts the nesting property of monotone cyclic feedback systems \cite[Proposition 3.2]{MPSm90}.
\end{proof}

\begin{proof}[Proof of Lemma \ref{lemma-annulus-homeomorphism}]
    Since $\tilde{G}$ is continuous by construction and we are considering metric spaces, it is sufficient that we show that $\tilde{G}$ is injective.
    
    \textit{Step 1:} If $\tilde{G}(t_1, {b}) = \tilde{G}(t_2, {b})$, then $t_1 - t_2 = m \tilde{p}({b})$ for some $m \in \mathbb{Z}$.\\
    Let $\tilde{\gamma}_{b}$ be the family of orbits parametrized by $\tilde{\beta}$. By construction, we have that $P\tilde{\gamma}_{b} = \{\tilde{G}(t, b) : t\in \mathbb{R}\}$ and recall the Poincar\'{e}--Bendixson theorem for scalar DDEs with monotone feedback \cite[Theorem 2.1]{MPSe962}. Thus, $t \mapsto \tilde{G}(t, {b})$ is a $C^1$-embedding of $\mathbb{R}/\tilde{p}({b})\mathbb{Z}$ with image $P\tilde{\gamma}_{{b}}$. As a result, we obtain that if 
    \begin{align}
        \left(\tilde{x}^{b}(t_1), \tilde{x}^{b}(t_1 - 1)\right) = \left(\tilde{x}^{b}(t_2), \tilde{x}^{b}(t_2 - 1)\right),
    \end{align}
    then $t_1 - t_2 = m\tilde{p}({b})$ for some $m \in \mathbb{Z}$.

    \textit{Step 2:} If $r'({b}) = 0$ for all ${b} \in (-\varepsilon, \varepsilon)$, then Lemma \ref{lemma-annulus-homeomorphism} holds.\\
    Indeed, if $\tilde{\gamma}_{{b}_1}$ and $\tilde{\gamma}_{{b}_2}$ are different orbits of the DDE \eqref{reference-dde} for the same delay $\tilde{r}({b}_1) = \tilde{r}({b}_2)$, then, by \cite[Lemma 5.7]{MPSe961} the planar projections are nested, that is,
    \begin{align}
        {P} \tilde{\gamma}_{{b}_1} \cap {P} \tilde{\gamma}_{{b}_2} = \emptyset.
    \end{align}
    Together with \textit{Step 1}, this proves that if the map $\tilde{r}({b})$ is constant on $(-\varepsilon, \varepsilon)$, then $\tilde{G}$ is injective restricted to the annulus \eqref{domain-equation-annulus}.

    \textit{Step 3:} If ${P} \tilde{\gamma}_{{b}_1}  = {P} \tilde{\gamma}_{{b}_2}$, then ${b}_1 = {b}_2$. \\
    By Lemma \ref{lemma-no-degenerate-intersections}, we have that 
    \begin{align}
        \tilde{p}({b}_1) = \frac{\tilde{p}({b}_2)}{1 + m\tilde{p}({b}_2)},
    \end{align}
    for some $m \in \mathbb{Z}$. However, by the continuity of $\tilde{p}$, there exists an $n\in \mathbb{N}$ such that $\tilde{p}({b}_1), \tilde{p}({b}_2) \in J_n$. Hence, $m = 0$ and $\tilde{\gamma}_{{b}_1}  = \tilde{\gamma}_{{b}_2}$, by uniqueness of the implicit function theorem used in Lemma \ref{lemma-beta-hyperbolic-continuation} and Lemma \ref{lemma-beta-r-continuation}, we conclude that ${b}_1 = {b}_2$.

    \textit{Step 4:} Let ${P} \tilde{\gamma}_{{b}_1} \cap {P} \tilde{\gamma}_{{b}_2} \neq \emptyset$ and $P\tilde{\gamma}_{{b}_1} \neq P\tilde{\gamma}_{{b}_2}$. Then one of the following holds:
    \begin{enumerate}
        \item Either there exist ${b}_1^\ast, {b}_2^\ast \in [{b}_1, {b}_2]$ such that $P\tilde{\gamma}_{{b}_1^\ast} \cap {P} \tilde{\gamma}_{{b}_2^\ast} \neq \emptyset$ is a crossing, or
        \item ${P} \tilde{\gamma}_{{b}_1^\ast} \cap {P} \tilde{\gamma}_{{b}_2^\ast} \neq \emptyset$ for all ${b}_1^\ast, {b}_2^\ast \in [{b}_1, {b}_2]$.
    \end{enumerate}
    We depict the argument in Figure \ref{fig5}. If 1. above does not hold, then $P\tilde{\gamma}_{{b}_1}$ and $P\tilde{\gamma}_{{b}_2}$ intersect at a tangency. If we assume without loss of generality that $P\tilde{\gamma}_{{b}_2}$ lies on the closure of the outside of $P\tilde{\gamma}_{{b}_1}$, then $P\tilde{\gamma}_{b}$ is contained in the closure of the outside of $P\tilde{\gamma}_{{b}_1}$ for all ${b} \in [{b}_1, {b}_2]$. Otherwise, $P\tilde{\gamma}_{b}$ lies in the inside of $P\tilde{\gamma}_{{b}_1}$ and must intersect $P\tilde{\gamma}_{{b}_1}$ to pass to the the outside as ${b} \to {b}_2$. By \textit{Step 3}, such intersection must be a crossing.

    An analogous argument shows that $P\tilde{\gamma}_{b}$ lies inside the closure of the interior of $P\tilde{\gamma}_{{b}_2}$ for all ${b} \in [{b}_1, {b}_2]$. Hence, we conclude that 
    \begin{align}
        \emptyset \neq {P} \tilde{\gamma}_{{b}_1} \cap {P} \tilde{\gamma}_{{b}_2} \subset {P} \tilde{\gamma}_{{b}_1^\ast} \cap {P} \tilde{\gamma}_{{b}_2^\ast}, \quad \text{for all }{b}_1^\ast,{b}_2^\ast \in [{b}_1, {b}_2],
    \end{align}
    which implies 2.

    \begin{figure}
    \centering
    \includegraphics[width=0.87\textwidth]{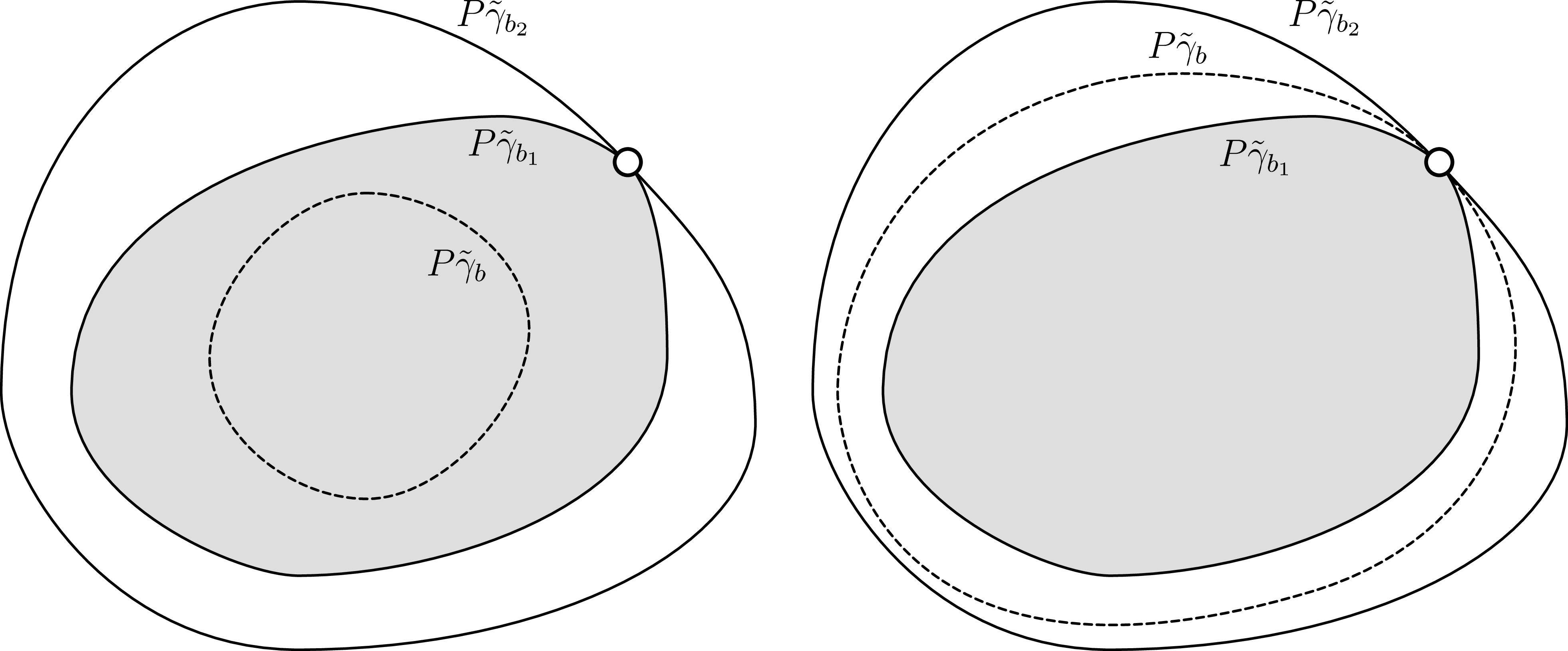}
    \caption{\em Possible relative configurations of the projections $P\tilde{\gamma}_b$ if $P\tilde{\gamma}_{b_1}$ and $P\tilde{\gamma}_{b_2}$ have a tangency (open dot). Left: If $P\tilde{\gamma}_b$ lies in the interior of $P\tilde{\gamma}_{b_1}$, then deforming it into $P\tilde{\gamma}_{b_2}$ yields a crossing. The same happens if $P\tilde{\gamma}_{b}$ lies in the outside of $P\tilde{\gamma}_{b_2}$ instead. Right: Alternative case, $P\tilde{\gamma}_b$ is pinned tangentially in between $P\tilde{\gamma}_{b_1}$ and $P\tilde{\gamma}_{b_2}$ for all values of $b$.}
    \label{fig5}
    \end{figure}

    \textit{Step 5:} Lemma \ref{lemma-annulus-homeomorphism} holds if $\tilde{r}'(0) \neq 0$.\\
    If $\tilde{r}'(0) \neq 0$, then there exists an $\varepsilon > 0$ small such that $\tilde{r}'({b}) \neq 0$ for all ${b} \in (-\varepsilon, \varepsilon)$. We proceed by contradiction and suppose that there exist delays ${b}_1 < {b}_2$ such that
    \begin{align} \label{intersection-projections}
        {P} \tilde{\gamma}_{{b}_1} \cap {P} \tilde{\gamma}_{{b}_2} \neq \emptyset.
    \end{align}
    If case 1. in \textit{Step 4} holds, then, by the stability of crossings, we can find a $\delta > 0$ such that $P \tilde{\gamma}_{{b}_1^\ast} \cap P\tilde{\gamma}_{{b}} \neq \emptyset$ for all ${b} \in ({b}_2^\ast - \delta, {b}_2^\ast + \delta)$. Applying Lemma \ref{lemma-constant-product}, and recalling that $\tilde{r}'({b}_2^\ast), \neq 0$ we obtain that $\tilde{p}({b}_1^\ast)$ is irrational. Naturally, the argument can be repeated to show that $\tilde{p}({b})$ is irrational, and hence constant, near ${b}_1^\ast$. For the same reason, but exchanging the indices of the periodic orbits, we conclude that $\tilde{p}({b})\tilde{r}({b})$ is constant close to ${b}_1^\ast$. Hence $\tilde{r}'({b}) = 0$ in a neighborhood of ${b}_1^\ast$, in contradiction to $\tilde{r}'(0) \neq 0$.
    
    If case 2. in \textit{Step 4} holds, then ${P} \tilde{\gamma}_{{b}_1^\ast} \cap {P} \tilde{\gamma}_{{b}_2^\ast} \neq \emptyset$ for all ${b}_1^\ast, {b}_2^\ast \in [{b}_1, {b}_2]$. We apply Lemma \ref{lemma-constant-product} with $\gamma_\ast = \tilde{\gamma}_{b}$ for all ${b} \in [{b}_1, {b}_2]$, which shows that $\tilde{p}'({b}) = \tilde{r}'({b}) = 0$ for all ${b} \in [{b}_1, {b}_2]$, in contradiction to $\tilde{r}'(0) \neq 0$.
    
    \textit{Step 6:} Lemma \ref{lemma-annulus-homeomorphism} holds if $\tilde{r}'(0) = 0$.
    
    Recall from Lemma \ref{lemma-beta-r-continuation} that $\partial_{b}\tilde{x}^0(0) \neq 0$. By construction, $\tilde{x}^{b}(0)$ is the amplitude of the corresponding periodic solution, and we may choose $a = \tilde{x}^{b}(0)$ as a coordinate. In amplitude coordinates, we denote $\gamma_a := \tilde{\gamma}_{{b}(a)}$, $r(a) := \tilde{r}(b(a))$, and $G(t, a) := \tilde{G}(t, {b}(a))$, and assume without loss of generality that ${b}'(a) > 0$. By contradiction, suppose that there exist $a_1 < a_2$ such that
    \begin{align}
        P\gamma_{a_1} \cap P\gamma_{a_2} \neq \emptyset.
    \end{align}
    By \textit{Step 2}, if $r(a)$ is constant on $(a_1, a_2)$, we are done. Otherwise, we claim that there exists an $a_\ast \in (a_1, a_2)$ such that
    \begin{align}\label{step6-claim}
        \text{ $r'(a_\ast) \neq 0$ and $P\gamma_{a_\ast}$ has a crossing intersection with either $P\gamma_{a_1}$ or $P\gamma_{a_2}$.}
    \end{align}
    Ideed, we choose an $\tilde{a}_\ast \in (a_1, a_2)$ such that $r'(\tilde{a}_\ast) \neq 0$. Since the amplitudes are achieved on the nullcline line $f^{-1}(0) \subset \mathbb{R}^2$, all curves $P\gamma_a$ are parallel at the nullcline. Thus, the triangle determined by $f^{-1}(0) \subset \mathbb{R}^2$, $P\gamma_{a_1}$, and $P\gamma_{a_2}$ can only be escaped through $P\gamma_{a_1}$, and $P\gamma_{a_2}$; see Figure \ref{fig6}. If $P\gamma_{\tilde{a}_\ast}$ crosses either $P\gamma_{a_1}$ or $P\gamma_{a_2}$, then the claim is true with $a_\ast := \tilde{a}_\ast$. If $P\gamma_{\tilde{a}_\ast}$ crosses neither $P\gamma_{a_1}$ nor $P\gamma_{a_2}$, we claim that there exists a $\delta > 0$ such that the result holds with $a_\ast = \tilde{a}_\ast + \delta$. Indeed, this situation only happens if $P\gamma_{a_\ast}$ leaves the triangle through a tangency at the intersection of $P\gamma_{a_1}$ and $P\gamma_{a_2}$; see Figure \eqref{fig6}. Thus, we obtain 
    \begin{align}
        \emptyset \neq P \gamma_{a_1} \cap P\gamma_{a_2} \subset P\gamma_{a_1^\ast} \cap P\gamma_{a_2^\ast},\quad \text{for all }a_1^\ast,a_2^\ast \in(\tilde{a}_\ast - \delta, \tilde{a}_\ast + \delta),
    \end{align}
    and Lemma \ref{lemma-constant-product} shows that $r'(\tilde{a}_\ast) = 0$ in contradiction to our assumptions. This proves the claim \eqref{step6-claim}.

    \begin{figure}[h]
        \centering
        \includegraphics[width=0.87\textwidth]{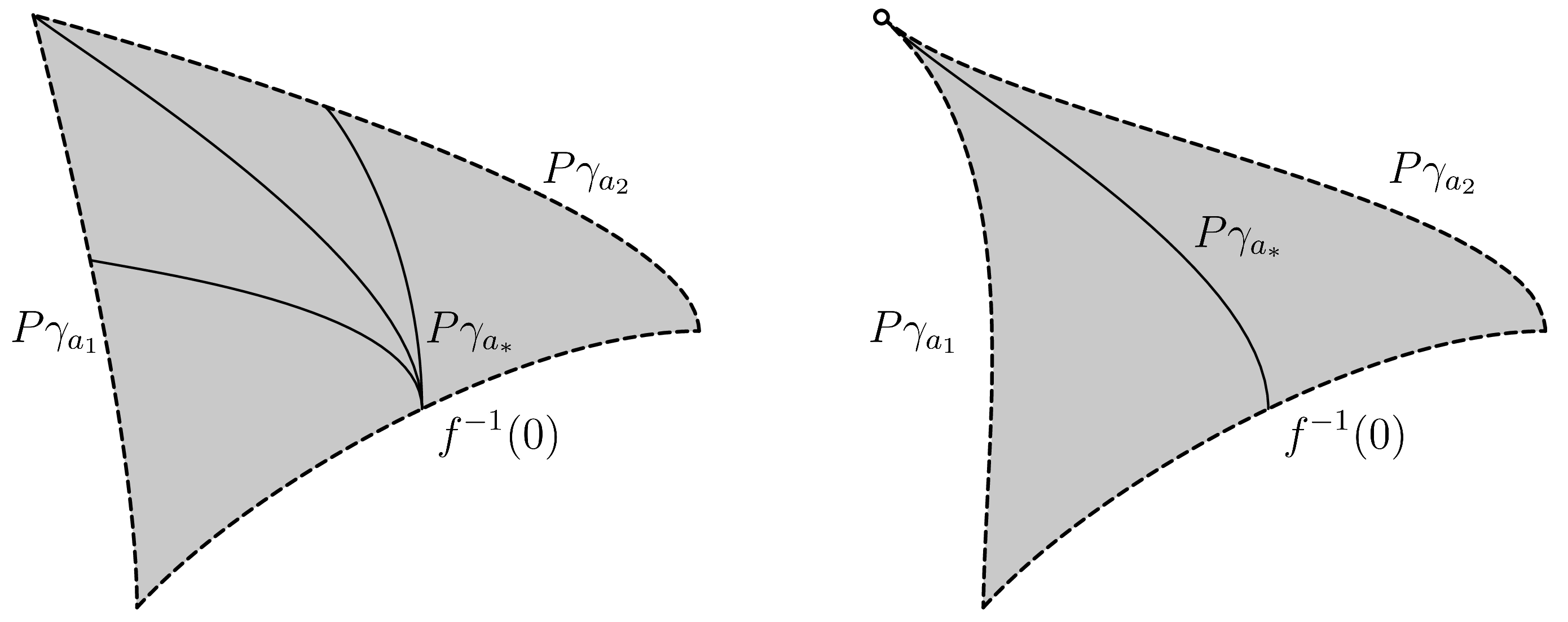}
        \caption{\em Triangles formed by the projections $P\gamma_{a_i}$ and the nullcline $f^{-1}(0)$. Left: If the projected orbits cross, then $P\gamma_{a_\ast}$ must cross one of them. Right: If $P\gamma_{a_\ast}$ crosses neither $P\gamma_{a_1}$ nor $P\gamma_{a_2}$, then it leaves the triangle through a tangency at the top tip (open dot).}
        \label{fig6}
    \end{figure}

    Next, for simplicity, we assume that $P\gamma_{a_\ast}$ in claim \eqref{step6-claim} crosses $P\gamma_{a_1}$. The case when $P\gamma_{a_\ast}$ crosses $P\gamma_{a_2}$ can be treated similarly. Since $a_1 < a_\ast$, the intersection of $P\gamma_{a_\ast}$ with the nullcline $f^{-1}(0)$ lies outside of $P\gamma_a$ for all $a \in [a_1, a_\ast)$. Recalling that $P\gamma_{a_\ast}$ crosses $P\gamma_{a_1}$, we define the interval 
    \begin{align}
        \mathcal{I} := \left\{a \in (a_1, a_\ast) : P\gamma_{\tilde{a}} \text{ crosses } P\gamma_{a_\ast} \text{ for all } \tilde{a} \in (a_1, a) \right\}.
    \end{align}
    By construction, $\mathcal{I}$ is open in $(a_1, a_\ast)$, nonempty, and connected. We claim that $\mathcal{I}$ is also closed. Indeed, suppose that $\overline{a}_1 = \sup \mathcal{I} < a_\ast$, then Lemma \ref{lemma-constant-product} implies that $r'(a) = 0$ for all $a\in\mathcal{I}$ and \textit{Step 2} shows that $P\gamma_a$ lies outside of $P\gamma_{a_1}$ for all $a \in \mathcal{I}$. In particular, $P\gamma_{\overline{a}_1}$ contains $P\gamma_{a_1}$ in its inside. Moreover, since $a_1 < a_\ast$, the intersection of $P\gamma_{a_\ast}$ with the nullcline $f^{-1}(0)$ lies outside of $P\gamma_a$ for all $a \in \mathcal{I}$. Recalling that $P\gamma_{a_\ast}$ crosses $P\gamma_{a_1}$, we conclude that $P\gamma_{a_\ast}$ crosses $P\gamma_{\overline{a}_1}$. Hence $\mathcal{I} = (a_1, a_\ast)$, and, by Lemma \ref{lemma-constant-product}, $r'(a) = 0$ for all $a \in (a_1, a_\ast)$. By the continuity of $\tilde{r}(a)$, we reach a contradiction to $r'(a_\ast) \neq 0$, which finishes the proof.
\end{proof}

\section{Proof of Theorem \ref{thm4}}\label{Sec6}
\begin{proof}
    The proof is direct. Assume that two periodic branches $\mathcal{B}$ and $\hat{\mathcal{B}}$ are such that their cyclicity components $\mathcal{O}$ and $\hat{\mathcal{O}}$ intersect. Then we prove that both $\mathcal{B}$ and $\hat{\mathcal{B}}$ emanate from a Hopf bifurcation point and show Theorem \ref{thm4}. Indeed, in time-amplitude coordinates, we have that
    \begin{align}\label{eq-fam-sols}
        \dot x^a(t) = r(a)f(x^a(t), x^a(t - 1)) \quad \text{and} \quad \dot {\hat{x}}^a(t) = \hat{r}(a)f(\hat{x}^a(t), \hat{x}^a(t - 1)).
    \end{align}
    Recalling that the amplitude are $\alpha(\mathcal{O})$ and $\hat{\alpha}(\hat{\mathcal{O}})$, we consider
    \begin{align}
        \underline{a} := \inf(\alpha(\mathcal{O})\cap \hat{\alpha}(\hat{\mathcal{O}})).
    \end{align}
    We claim that 
    \begin{align}\label{eq-claim-equil}
        f(\underline{a}, \underline{a}) = 0.
    \end{align}
    Indeed, by the time rescaling symmetry \eqref{def-resc-sym}, we may assume that $p, \hat{p} \in J_n$ are uniformly bounded. Next, we rescale time via
    \begin{align}\label{def-norm-fam}
        \boldsymbol{x}^a(t) := x^a(p(a)t) \quad \text{and} \quad \hat{\boldsymbol{x}}^a(t) := \hat{x}^a(\hat{p}(a)t),
    \end{align}
    by construction, the normalized solutions \eqref{def-norm-fam} have period $1$ and satisfy the DDEs
    \begin{align}\label{normalized-DDEa}
        \dot{\boldsymbol{x}}^a(t) &= p(a)r(a) f\left(\boldsymbol{x}^a(t), \boldsymbol{x}^a\left(t - \frac{1}{p(a)}\right)\right),\\
        \dot{\hat{\boldsymbol{x}}}^a(t) &= \hat{p}(a)\hat{r}(a) f\left(\hat{\boldsymbol{x}}^a(t), \hat{\boldsymbol{x}}^a\left(t - \frac{1}{\hat{p}(a)}\right)\right).\label{normalized-DDEb}
    \end{align}
    By Lemma \ref{lemma-constant-product}, we have that $p(a)r(a)$ and $\hat{p}(a)\hat{r}(a)$ are constant for all $a \in \alpha(\mathcal{O})\cap \hat{\alpha}(\hat{\mathcal{O}})$ sufficiently close to $\underline{a}$. Hence, any two sequences of normalized periodic solutions
    \begin{align}\label{def-per-seq}
        \left\{\boldsymbol{x}^{a_n}(t)\right\}_{n \in \mathbb{N}} \quad \text{and} \quad \left\{\hat{\boldsymbol{x}}^{a_n}(t)\right\}_{n \in \mathbb{N}},
    \end{align}
    with $a_n \to \underline{a}$ are uniformly bounded as elements of $C^2(\mathbb{R}/\mathbb{Z}, \mathbb{R})$. By the Arzel\'{a}--Ascoli theorem, taking subsequences if necessary, there exist functions $\boldsymbol{x}^{\underline{a}}(t), \, \boldsymbol{x}^{\underline{a}}(t) \in C^1(\mathbb{R}/\mathbb{Z}, \mathbb{R})$ that solve the normalized DDEs \eqref{normalized-DDEa}--\eqref{normalized-DDEb} for finite values $p(\underline{a}),\, \hat{p}(\underline{a}) > 0$. Hence, we have constructed periodic solutions $x^{\underline{a}}(t)$ and $\hat{x}^{\underline{a}}(t)$ with equal amplitude $\underline{a}$ of the DDE families \eqref{eq-fam-sols}.
    
    We claim that we have reached a contradiction unless
    \begin{align}\label{eq-equil-ident}
        x^{\underline{a}}(t) \equiv \hat{x}^{\underline{a}}(t) \equiv \underline{a}.
    \end{align}
    Indeed, if $x^{\underline{a}}(t)$ is not constant, then neither is $\hat{x}^{\underline{a}}(t)$. Otherwise, the constant solution $\underline{a}$ solves the DDE \eqref{reference-dde} for all values of $r \in \mathbb{R}$ and the existence of a nonconstant periodic solution $x^{\underline{a}}(t)$ contradicts the Poincar\'{e}--Bendixson theorem \cite[Theorem 2.1]{MPSe962}. Hence $(x^{\underline{a}}_t; r(\underline{a})) \in \mathcal{B}$ and $(\hat{x}^{\underline{a}}_t; \hat{r}(\underline{a})) \in \hat{\mathcal{B}}$ admit a local continuation for smaller values of the amplitude $\underline{a}$. However, this is a contradiction to $\underline{a}$ being the infimum of the intersection of the amplitude ranges. We conclude that the identity \eqref{eq-equil-ident} holds and, therefore, the claim \eqref{eq-claim-equil} follows.
    
    Notice that, by the convergence of the sequences \eqref{def-per-seq} as $a_n \to \underline{a}$, we conclude that the map
    \begin{align}
        \mathrm{Id} - \partial_2 S\left(p(\underline{a}), x^{\underline{a}}_0; r(\underline{a})\right) : C \longrightarrow C,
    \end{align}
    is not invertible. Otherwise, by the implicit function theorem, the constant function $\underline{a} \in C$ is a locally unique zero of $\mathrm{Id} - S(p(a), \underline{a}; r(a))$ for all values of $a$. Thus, contradicting that $(\underline{a}; \hat{r}(\underline{a})$ is an accumulation point of periodic orbits.
    
    Following \cite{HaLu93}, we conclude that the characteristic equation
    \begin{align}\label{eq-char}
        i\nu = r(\underline{a})\left(\partial_1 f(\underline{a}, \underline{a}) + \mathrm{e}^{-i\nu}\partial_2 f(\underline{a},\underline{a})\right), 
    \end{align}
    possesses solutions $\nu \in \mathbb{R}$. This is only possible if:
    \begin{itemize}
        \item Either $\partial_1 f(\underline{a}, \underline{a}) = -\partial_2 f(\underline{a},\underline{a})$ and $\nu = 0$ is a solution with multiplicity two of the characteristic equation \eqref{eq-char}, or
        \item $(\underline{a}; r(\underline{a}))$ is a Hopf point, that is, $\nu = 2\pi / p(\underline{a})$ is a simple solution of \eqref{eq-char}.
    \end{itemize}
    We reduce our analysis to the Hopf bifurcation scenario by perturbing the nonlinearity $f$ into
    \begin{align}
        \tilde{f}(u, v) = f(u, v) + \varepsilon \iota(u, v) (v - \underline{a}),
    \end{align}
    where $\iota$ is a cut-off function with support contained in an arbitrarily small ball close to $(\underline{a}, \underline{a})$ and such that $\iota(\underline{a}, \underline{a}) = 1$ in a ball around $(\underline{a}, \underline{a})$. Thus, we may choose $|\varepsilon|$ small enough such that our previous analysis holds, but $\partial_1 \tilde{f}(\underline{a}, \underline{a}) \neq - \partial_2 \tilde{f}(\underline{a}, \underline{a})$. Arguing analogously, we conclude that $(\underline{a}; \hat{r}(\underline{a}))$ is also a Hopf point of the extended DDE \eqref{extended-DDE}. By direct examination of the characteristic equations
    \begin{align}
        \nu = r(\underline{a})\left(\partial_1 f(\underline{a}, \underline{a}) + \mathrm{e}^{-\nu}\partial_2 f(\underline{a},\underline{a})\right),  \quad \text{and} \quad \hat{\nu} = \hat{r}(\underline{a})\left(\partial_1 f(\underline{a}, \underline{a}) + \mathrm{e}^{-\hat{\nu}}\partial_2 f(\underline{a},\underline{a})\right), 
    \end{align}
    we obtain that
    \begin{align}
        \hat{r}(\underline{a}) = (1 + mp(\underline{a})) r(\underline{a}), \quad \text{for some } m\in \mathbb{Z}.
    \end{align}
    In turn, the uniqueness of the Hopf branch completes the proof.
\end{proof}

\section{Proof of Lemma \ref{lemma-projections}} \label{Sec7}
To define the projections in Lemma \ref{lemma-projections}, we use the so-called \emph{formal adjoint equation}; see \cite{Henry70, Ha77}. In the following, we use the transposition sign ``$^\mathsf{T}$'' to distinguish functions in the space
\begin{align}
    C^\mathsf{T} := C^0([0, 1], \mathbb{R}).
\end{align}
Then, the \emph{formal adjoint equation of} \eqref{linearized-equation-periodic-orbit} is the linear DDE
\begin{align}\label{formal-adjoint-equation}
    \begin{split}
        \dot y^\mathsf{T}(t) &= -A(t)y^\mathsf{T}(t) - B(t + 1)y^\mathsf{T}(t + 1),\\
        y^\mathsf{T}_0(\vartheta) & = \psi^\mathsf{T}(\vartheta),\text{ for all }\vartheta \in [0,1],
    \end{split}
\end{align}
where the coefficients are given by \eqref{linear-coefficients}. Notice that for any initial data $\psi^\mathsf{T} \in C^\mathsf{T}$, we can solve the DDE \eqref{formal-adjoint-equation} in backwards time direction. To be coherent with \cite{Ha77}, given a solution $y^\mathsf{T}(t)$, $t\leq 1$ of \eqref{formal-adjoint-equation}, the subindex notation in combination with the transpose denotes $y^\mathsf{T}_t(\vartheta) := y^\mathsf{T}(t + \vartheta)$ for $\vartheta \in [0, 1]$. Then, the \emph{formal adjoint monodromy operator} is $L^\mathsf{T}$ given by the relation
\begin{equation}
    L^\mathsf{T}y^\mathsf{T}_0 := y^\mathsf{T}_{-{p_\ast}}.
\end{equation}
We pair $C^\mathsf{T}$ with $C$ via the time-dependent bilinear form
\begin{align}\label{bilinear-form}
    \left[\varphi^\mathsf{T},\varphi\right]_t := \varphi^\mathsf{T}(0)\varphi(0) + \int_0^1 \varphi^\mathsf{T}(\vartheta)B(t+\vartheta)\varphi(\vartheta-1)\, \mathrm{d} \vartheta.
\end{align}
Notice that, by direct differentiation, $[y^\mathsf{T}_t, y_t]_t$ is constant in $t$ along solutions of the formal adjoint pair
\begin{align}
    \begin{split}
        \dot y(t) &= A(t) y(t) + B(t) y(t-1),\\
        \dot y^\mathsf{T}(t) &= -A(t) y^\mathsf{T}(t) + B(t+1) y^\mathsf{T}(t+1).
    \end{split}
\end{align}
More generally, we have the following proposition.

\begin{proposition}\label{propsition-derivative-bilinear-form}
    Consider the nonhomogenous linear DDE
    \begin{align}
    \begin{split}\label{equation-linear-forced}
        \dot y(t) = A(t) y(t) + B(t) y(t-1) + \ell(t),
    \end{split}
    \end{align}
    where $\ell:\mathbb{R} \to \mathbb{R}$ and let $y^\mathsf{T}(t)$ be a solution of the formal adjoint equation \eqref{formal-adjoint-equation}. Then, along a solution $y(t)$ of \eqref{equation-linear-forced}, the bilinear form \eqref{bilinear-form} satisfies the identity
    \begin{align}
        \left[y^\mathsf{T}_t, y_t\right]_t = \left[y^\mathsf{T}_0, y_0\right]_0 + \int_0^ty^\mathsf{T}(t)\ell(t)\, \mathrm{d} t.
    \end{align}
\end{proposition}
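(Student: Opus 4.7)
The plan is to show that $\frac{d}{dt}[y^\mathsf{T}_t, y_t]_t = y^\mathsf{T}(t)\ell(t)$ by direct differentiation, and then integrate from $0$ to $t$. This reduces everything to a careful bookkeeping exercise in which the contributions involving $B$ cancel in pairs, leaving only the forcing term.

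First I would rewrite the integral in \eqref{bilinear-form} via the substitution $s = t + \vartheta$, yielding
\begin{align*}
    [y^\mathsf{T}_t, y_t]_t = y^\mathsf{T}(t)y(t) + \int_t^{t+1} y^\mathsf{T}(s) B(s) y(s-1)\, \mathrm{d} s,
\end{align*}
which has the practical advantage that the $t$-dependence now lives only in the integration limits and the pointwise evaluations, so I can apply the fundamental theorem of calculus cleanly.

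Next I would differentiate term by term. The boundary piece contributes
\begin{align*}
    \dot y^\mathsf{T}(t) y(t) + y^\mathsf{T}(t)\dot y(t),
\end{align*}
into which I substitute the nonhomogeneous DDE \eqref{equation-linear-forced} for $\dot y(t)$ and the formal adjoint equation \eqref{formal-adjoint-equation} for $\dot y^\mathsf{T}(t)$. The $A(t)y^\mathsf{T}(t)y(t)$ terms cancel immediately, leaving
\begin{align*}
    -B(t+1)y^\mathsf{T}(t+1)y(t) + y^\mathsf{T}(t)B(t)y(t-1) + y^\mathsf{T}(t)\ell(t).
\end{align*}
The integral piece contributes the Leibniz boundary terms
\begin{align*}
    y^\mathsf{T}(t+1)B(t+1)y(t) - y^\mathsf{T}(t)B(t)y(t-1).
\end{align*}
The two $B(t+1)$ terms cancel, the two $B(t)$ terms cancel, and the only surviving term is $y^\mathsf{T}(t)\ell(t)$.

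Finally, integrating $\frac{d}{dt}[y^\mathsf{T}_t, y_t]_t = y^\mathsf{T}(t)\ell(t)$ from $0$ to $t$ yields the stated identity. I do not expect any genuine obstacle here: the only subtlety is matching the orientation of the bilinear form (the subscript convention $y^\mathsf{T}_t(\vartheta) = y^\mathsf{T}(t+\vartheta)$ for $\vartheta \in [0,1]$) with the direction of time in the formal adjoint equation, but once the substitution $s = t+\vartheta$ is made the cancellations are automatic.
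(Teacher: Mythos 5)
Your proposal is correct and is exactly the paper's argument: the paper's proof consists of the single observation that $\tfrac{\mathrm{d}}{\mathrm{d}t}[y^\mathsf{T}_t,y_t]_t = y^\mathsf{T}(t)\ell(t)$ by direct differentiation, and you have simply carried out that differentiation in full (correctly using the sign convention of \eqref{formal-adjoint-equation}, under which the $B(t+1)$ and $B(t)$ terms cancel as you describe). No gaps.
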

\begin{proof}
    The proof follows from direct differentiation of \eqref{bilinear-form} since
    \begin{align}
        \frac{\, \mathrm{d} }{\, \mathrm{d} t}\left[y^\mathsf{T}_t, y_t\right]_t = y^\mathsf{T}(t)\ell(t).
    \end{align}
\end{proof}

\begin{lemma}\label{lemma-relative-zero-position}
    Let ${x}^\ast(t)$ be a normalized periodic solution of the DDE \eqref{reference-dde} with minimal period ${p_\ast} \in J_n$, where $J_n$ are defined in \eqref{In-definition}, and time of depth $q \in (0, {p_\ast})$. Then, ${n_1} := \left\lfloor (n - 1)/ 2\right\rfloor$ is the only integer such that
    \begin{align}\label{N-property}
        {n_1} {p_\ast} < 1 < ({n_1} + 1){p_\ast}.
    \end{align}
    Moreover, $q$ satisfies
    \begin{align}\label{zero-distance-b}
        {n_1} {p_\ast} < 1 < q + {n_1} {p_\ast} < q + 1 < ({n_1} + 1) {p_\ast},& \quad \text{ for all $n$ odd and}\\
        \label{zero-distance-c}
        {n_1} {p_\ast} < q - {p_\ast} + 1 <  q + {n_1} {p_\ast} < 1 < ({n_1} + 1) {p_\ast}, &\quad \text{ for all $n$ even.}
    \end{align}
\end{lemma}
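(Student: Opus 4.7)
The plan is to reduce the inequalities \eqref{zero-distance-b}--\eqref{zero-distance-c} to a sign analysis of $\dot x^\ast$ at $\pm 1$, and to invoke the inequality $\dot x^\ast(1)\dot x^\ast(-1) < 0$ supplied by Lemma \ref{lemma-basic-periodic}.

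First, I would prove the first claim (that $n_1 = \lfloor (n-1)/2 \rfloor$ is the unique integer with $n_1 p_\ast < 1 < (n_1+1)p_\ast$) by a direct computation, splitting by the parity of $n$. Uniqueness is immediate because $p_\ast > 0$. For existence, when $n = 2n_1+1$ is odd, the bounds $p_\ast \in (2/(2n_1+1),\, 1/n_1)$ give $n_1 p_\ast < 1$ and $(n_1+1)p_\ast > 2(n_1+1)/(2n_1+1) > 1$; when $n = 2n_1+2$ is even, the bounds $p_\ast \in (1/(n_1+1),\, 2/(2n_1+1))$ give $(n_1+1)p_\ast > 1$ and $n_1 p_\ast < 2n_1/(2n_1+1) < 1$.

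Second, I would exploit the simple-oscillation structure of $x^\ast$: since $x^\ast(0)$ is the unique maximum and $x^\ast(q)$ the unique minimum on $[0, p_\ast)$, one has $\dot x^\ast < 0$ on each interval $(kp_\ast,\, q + kp_\ast)$ and $\dot x^\ast > 0$ on each interval $(q + kp_\ast,\, (k+1)p_\ast)$, for $k \in \mathbb{Z}$. By the first claim, the unique representatives of $1$ and $-1$ modulo $p_\ast$ inside $(0, p_\ast)$ are $1 - n_1 p_\ast$ and $(n_1+1)p_\ast - 1$, respectively. Hence the sign of $\dot x^\ast(1)$ is positive when $1 - n_1 p_\ast > q$ and negative when $1 - n_1 p_\ast < q$, and similarly the sign of $\dot x^\ast(-1)$ is positive or negative according to whether $(n_1+1)p_\ast - 1$ is larger or smaller than $q$.

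Third, invoking $\dot x^\ast(1)\dot x^\ast(-1) < 0$ from Lemma \ref{lemma-basic-periodic} forces $q$ to lie strictly between $1 - n_1 p_\ast$ and $(n_1+1)p_\ast - 1$. The order of these two numbers is governed by the sign of $(2n_1+1)p_\ast - 2$: when $n = 2n_1+1$ is odd, $J_n$ yields $(2n_1+1)p_\ast > 2$, so $1 - n_1 p_\ast < q < (n_1+1)p_\ast - 1$; when $n = 2n_1+2$ is even, $J_n$ yields $(2n_1+1)p_\ast < 2$, so $(n_1+1)p_\ast - 1 < q < 1 - n_1 p_\ast$. Rearranging these two-sided inequalities and chaining them with the bounds $n_1 p_\ast < 1 < (n_1+1)p_\ast$ from the first claim produces exactly \eqref{zero-distance-b} and \eqref{zero-distance-c}, respectively.

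The only real obstacle is the bookkeeping of the modular reductions, in particular making sure that the representatives of $\pm 1$ modulo $p_\ast$ do indeed land inside $(0, p_\ast)$ for every admissible $n$. The second sign condition $\dot x^\ast(q+1)\dot x^\ast(q-1) < 0$ from Lemma \ref{lemma-basic-periodic} turns out not to be needed for the argument above, and a brief analogous computation (reducing $q \pm 1$ modulo $p_\ast$ inside the interval already pinned down for $q$) shows that it is automatically satisfied in both parity cases, providing a consistency check.
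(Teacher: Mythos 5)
Your proof is correct, but it follows a genuinely different route from the paper's. The paper establishes \eqref{zero-distance-b}--\eqref{zero-distance-c} only for the base cases $p_\ast \in J_1$ and $p_\ast \in J_2$, using the Sturm-type zero-number bounds of \cite{MPSe961} (any two zeros of $\dot x^\ast$ are more than one apart when $p_\ast \in J_1$; at least one and at most two zeros per unit interval when $p_\ast \in J_2$), and then lifts the result to general $n$ through the bijection $p \mapsto p/(1 - n_1 p)$ from $J_n$ onto $J_1$ or $J_2$, i.e.\ the time rescaling symmetry \eqref{def-resc-sym} applied to $x^\ast((1 - n_1 p_\ast)t)$. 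You instead treat all $n$ at once: you reduce $\pm 1$ modulo $p_\ast$ to the representatives $1 - n_1 p_\ast$ and $(n_1+1)p_\ast - 1$ in $(0, p_\ast)$, read off the signs of $\dot x^\ast(\pm 1)$ from the simple-oscillation structure (maximum at $0$, minimum at $q$, monotone in between), and let the inequality $\dot x^\ast(1)\dot x^\ast(-1) < 0$ of Lemma \ref{lemma-basic-periodic} pin $q$ strictly between the two representatives, whose relative order is decided by the sign of $(2n_1+1)p_\ast - 2$, i.e.\ by the parity of $n$. Your argument is more self-contained within the paper, since it leans only on Lemma \ref{lemma-basic-periodic} (itself proved via the Jordan-curve projection) and elementary modular bookkeeping, and it avoids the rescaling detour; the paper's version, on the other hand, makes the connection to the slow branch ($p_\ast \in J_1$) explicit, which is reused in the subsequent Remark. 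Both arguments are sound, and your observation that the second sign condition $\dot x^\ast(q+1)\dot x^\ast(q-1) < 0$ is not needed (it comes out as a consistency check) is accurate.
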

\begin{proof}
    Indeed, the property \eqref{N-property} follows from \eqref{In-definition} since
    \begin{align}
        \frac{n - 1}{2} {p_\ast} < 1 < \frac{n}{2} {p_\ast}.
    \end{align}    
    Next, we show \eqref{zero-distance-b}--\eqref{zero-distance-c}. If we assume ${n_1} = 0$, then the result follows from the so-called zero number \cite{MPSe961}. More precisely, if ${p_\ast} \in J_1$, then the distance between any two zeros of $\dot {x}^\ast(t)$ is bigger than one. Since $\dot {x}^\ast(0) = \dot {x}^\ast(q) = 0$, we obtain \eqref{zero-distance-b}. In case ${p_\ast} \in J_2$, then $\dot {x}^\ast(t)$ possesses at least one zero over any interval of length one and at most two. Hence,
    \begin{align}
        0 < q < 1 < {p_\ast} < q + 1 < {p_\ast} + q,
    \end{align}
    and subtracting ${p_\ast}$ from the final inequalities, we obtain \eqref{zero-distance-c}. To see the general case, notice that if we consider $J_n$ with $n$ odd, then
    \begin{equation}
        \begin{array}{rcl}
            J_n & \longrightarrow & J_1 \\
             p & \longmapsto & \displaystyle \frac{p}{1 - {n_1} p},
        \end{array}
    \end{equation}
    is a bijection. Hence, the time rescaled function 
    \begin{align}
        {x}^\ast((1 - {n_1} {p_\ast})t)
    \end{align}
    solves \eqref{reference-dde} at delay $(1 - {n_1} {p_\ast})r_\ast$ and has minimal period 
    \begin{align}
        \frac{{p_\ast}}{1 - {n_1} {p_\ast}} \in J_1.
    \end{align}
    Since \eqref{zero-distance-b} holds for $J_1$, in general we obtain the inequalities
    \begin{align}
        0 < 1 < \frac{q}{1 - {n_1} {p_\ast}} < \frac{q}{1 - {n_1} {p_\ast}} + 1 < \frac{{p_\ast}}{1 - {n_1} {p_\ast}},
    \end{align}
    hence,
    \begin{align}
        0 < 1 - {n_1} {p_\ast}  < q < q + 1 - {n_1} {p_\ast} < {p_\ast}
    \end{align}
    and, adding ${n_1} {p_\ast}$, we obtain \eqref{zero-distance-b}. If $n$ is even, then the bijection is
    \begin{equation}
        \begin{array}{rcl}
            J_n & \longrightarrow & J_2 \\
             p & \longmapsto & \displaystyle \frac{p}{1 - {n_1} p},
        \end{array}
    \end{equation}
    after rescaling, we obtain
    \begin{align}
        0 < \frac{q - {p_\ast}}{1 - {n_1} {p_\ast}} + 1 < \frac{q}{1 - {n_1} {p_\ast}} < 1 < \frac{{p_\ast}}{1 - {n_1} {p_\ast}},
    \end{align}
    which yields \eqref{zero-distance-c}.
\end{proof}

\begin{remark}
    Notice that the proof of Lemma \ref{lemma-relative-zero-position} shows we can always choose a representative branch where the minimal period of the period solutions satisfies $p_\ast \in J_1$, that is, $p_\ast > 2$. That choice is the slow branch discussed in Section \ref{Sec2}.
\end{remark}
\begin{lemma}\label{lemma-adjoint-eigenfunctions}
    Let ${x}^\ast(t)$ be a periodic solution of the DDE \eqref{reference-dde}. Then the spectrum of $L^\mathsf{T}$ coincides with that of the monodromy operator $L$ solving \eqref{linearized-equation-periodic-orbit}. Moreoever, given the critical eigenvalue $\mu_\mathrm{c}$ and $E^\mathrm{c}$ in Proposition \ref{proposition-spectrum}, for all $\Psi_0 \in E^\mathrm{c}$ such that $E^\mathrm{c} = \mathrm{span}_{\mathbb{R}}\{\dot {x}^\ast_0, \Psi_0\}$, we can find a unique eigenfunction $\Psi^\mathsf{T}_0 \in C^\mathsf{T}$ of the formal adjoint equation \eqref{formal-adjoint-equation} such that
    \begin{align}\label{pure-eigenfunction}
        L^\mathsf{T} \Psi_0^\mathsf{T} = \mu_\mathrm{c} \Psi_0^\mathsf{T}.
    \end{align}
    The maps
    \begin{align}\label{equation-formal-adjoint-eigenfunctions}
        {P}_\Psi \varphi:= \left[\Psi^\mathsf{T}_0, \varphi\right]_0\Psi_0 \quad \text{and} \quad {Q}_\Psi\varphi := \varphi - {P}_\Psi\varphi, \quad \varphi\in C,
    \end{align}
    are projections with range
    \begin{align}\label{identities-range}
        \operatorname{ran} P_\Psi = \mathrm{span}_{\mathbb{R}} \{\Psi_0\} \quad \text{and} \quad \operatorname{ran} Q_\Psi = \mathrm{span}_{\mathbb{R}}\{\dot x_0^\ast\} \oplus R^\mathrm{c}.
    \end{align}
    Moreover, we have that $P_\Psi \dot x_0^\ast = Q_\Psi \Psi_0 = 0$. Analogously to $\dot {x}^\ast(t)$, $\Psi^\mathsf{T}(t)$ possesses two zeros in the interval $[0, {p_\ast})$ and both of them are simple.
\end{lemma}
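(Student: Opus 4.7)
The plan is to build $\Psi_0^\mathsf{T}$ via the duality induced on $C^\mathsf{T} \times C$ by the bilinear form \eqref{bilinear-form}, and then read off the projection and zero-counting properties from standard Floquet theory. First, since the coefficients $A, B$ from \eqref{linear-coefficients} are $p_\ast$-periodic, we have $[\varphi^\mathsf{T}, \varphi]_t = [\varphi^\mathsf{T}, \varphi]_{t + p_\ast}$, and Proposition \ref{propsition-derivative-bilinear-form} with $\ell \equiv 0$ yields
\begin{align}
    [L^\mathsf{T}\varphi^\mathsf{T}, \varphi]_0 = [\varphi^\mathsf{T}_{-p_\ast}, \varphi]_0 = [\varphi^\mathsf{T}, \varphi_{p_\ast}]_{p_\ast} = [\varphi^\mathsf{T}, L\varphi]_0.
\end{align}
Hence $L^\mathsf{T}$ is the formal adjoint of $L$ under $[\cdot, \cdot]_0$, and the classical spectral theory for DDE monodromy operators in \cite{HaLu93, Ha77} gives that $L$ and $L^\mathsf{T}$ share the same spectrum, with matching algebraic and geometric multiplicities, and generalized eigenspaces at distinct eigenvalues pair trivially.

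Next, I construct $\Psi_0^\mathsf{T}$ by cases. If $\mu_\mathrm{c} \neq 1$, then $\ker(\mu_\mathrm{c}\operatorname{Id} - L^\mathsf{T})$ is one-dimensional, so I pick any nonzero $\tilde{\Psi}_0^\mathsf{T}$ in it. By the preceding orthogonality, $[\tilde{\Psi}_0^\mathsf{T}, \varphi]_0 = 0$ for every $\varphi$ in a generalized eigenspace of $L$ at an eigenvalue other than $\mu_\mathrm{c}$; in particular $[\tilde{\Psi}_0^\mathsf{T}, \dot{x}_0^\ast]_0 = 0$ and $\tilde{\Psi}_0^\mathsf{T}$ kills $R^\mathrm{c}$. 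Non-degeneracy of $[\cdot, \cdot]_0$ on the $\mu_\mathrm{c}$-eigenspace forces $[\tilde{\Psi}_0^\mathsf{T}, \Psi_0]_0 \neq 0$, and rescaling fixes a unique $\Psi_0^\mathsf{T}$ with $[\Psi_0^\mathsf{T}, \Psi_0]_0 = 1$. If instead $\mu_\mathrm{c} = 1$, then by Proposition \ref{proposition-spectrum} we have a Jordan block $\dot{x}_0^\ast \in \ker(\operatorname{Id} - L)$, $\Psi_0 \in \ker(\operatorname{Id} - L)^2 \setminus \ker(\operatorname{Id} - L)$, and by the multiplicity match the mirrored block on $L^\mathsf{T}$ has a one-dimensional true eigenspace spanned by some $\tilde{\Psi}_0^\mathsf{T}$. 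The standard Jordan duality under a non-degenerate bilinear form gives $[\tilde{\Psi}_0^\mathsf{T}, \dot{x}_0^\ast]_0 = 0$ (true pairs with true) and $[\tilde{\Psi}_0^\mathsf{T}, \Psi_0]_0 \neq 0$ (true pairs with generalized), and again I rescale to reach $[\Psi_0^\mathsf{T}, \Psi_0]_0 = 1$. In both cases, the maps \eqref{equation-formal-adjoint-eigenfunctions} satisfy $P_\Psi^2 = P_\Psi$, $P_\Psi \Psi_0 = \Psi_0$, $P_\Psi \dot{x}_0^\ast = 0$, and $P_\Psi|_{R^\mathrm{c}} = 0$; the decomposition $C = \mathrm{span}_{\mathbb{R}}\{\dot{x}_0^\ast\} \oplus \mathrm{span}_{\mathbb{R}}\{\Psi_0\} \oplus R^\mathrm{c}$ from Proposition \ref{proposition-spectrum} then identifies the ranges \eqref{identities-range}.

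Finally, for the claim that $\Psi^\mathsf{T}(t)$ has exactly two simple zeros on $[0, p_\ast)$, I would transport the zero-number theory of \cite{MPSe961, MPNu13} to the formal adjoint. Under the time reversal $s = -t$, the equation \eqref{formal-adjoint-equation} becomes a scalar linear DDE of monotone delayed feedback type with the coefficient signs preserved by $r_\ast \partial_2 f \neq 0$, so the conclusions of \cite[Theorem 5.1]{MPNu13} apply: all eigenfunctions of $L^\mathsf{T}$ have only simple zeros, and the zero number is an eigenvalue invariant that coincides for $L$ and $L^\mathsf{T}$. Combined with the two-zero count of $\Psi_0$ from Proposition \ref{proposition-spectrum}, this forces $\Psi_0^\mathsf{T}$ to share that count.

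The main obstacle will be the Jordan-pairing step in the nonhyperbolic case $\mu_\mathrm{c} = 1$: one must show that the \emph{true} adjoint eigenfunction, rather than a generalized one, is the correct choice, by verifying that under a non-degenerate duality the pairing between Jordan chains on the two sides is anti-diagonal. A secondary delicate point is the transfer of the Mallet-Paret--Sell zero number from \eqref{linearized-equation-periodic-orbit} to \eqref{formal-adjoint-equation}, since the delay in the adjoint appears as an advance and care is needed to justify that the zero-counting functional remains monotonic and eigenvalue-constant.
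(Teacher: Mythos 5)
Your construction of $\Psi_0^\mathsf{T}$ and of the projections is sound and is essentially the mirror image of the paper's argument: the paper first invokes \cite{Henry70} to obtain a generalized eigenfunction of $L^\mathsf{T}$ realizing the projection with the range identities \eqref{identities-range}, and only afterwards shows it is a \emph{pure} eigenfunction by writing $\Psi^\mathsf{T}(t+p_\ast)=\Psi^\mathsf{T}(t)+\kappa^\mathsf{T}\xi^\mathsf{T}(t)$ and computing $\kappa^\mathsf{T}=0$ from the invariance of \eqref{bilinear-form}; you instead start from the pure adjoint eigenfunction and verify the anti-diagonal Jordan pairing. The two routes are equivalent, and your linear-algebra claim (true adjoint eigenvector annihilates the true eigenvector $\dot x_0^\ast$ and pairs nontrivially with the generalized one $\Psi_0$) is correct, provided you back the non-degeneracy of $[\cdot,\cdot]_0$ between the critical eigenspaces of $L$ and $L^\mathsf{T}$ with \cite{Henry70}, as the paper does — in this infinite-dimensional setting that non-degeneracy is not free.

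The genuine gap is the final zero count. The sentence ``the zero number is an eigenvalue invariant that coincides for $L$ and $L^\mathsf{T}$'' is precisely the assertion to be proved, not a citable fact, and the paper spends the bulk of its proof on it. What the time reversal and \cite{MPSe961, MPNu13} actually give you is (i) simplicity of the zeros of $\Psi^\mathsf{T}$ and (ii) that the numbers of sign changes of $\Psi^\mathsf{T}$ and of $\dot x^\ast$ over any \emph{unit-length} interval differ by at most one; whereas the per-period counts, if unequal, differ by at least two since both are even. For $p_\ast<2$ the pigeonhole principle closes this, but on the slow branch $p_\ast\in J_1$ (see \eqref{In-definition}) it does not, and the paper needs an extra input from the bilinear form: at any zero $q^\mathsf{T}$ of $\Psi^\mathsf{T}$ the invariance from Proposition \ref{propsition-derivative-bilinear-form} collapses $\left[\Psi^\mathsf{T}_{q^\mathsf{T}},\dot x^\ast_{q^\mathsf{T}}\right]_{q^\mathsf{T}}=\left[\Psi^\mathsf{T}_{0},\dot x^\ast_{0}\right]_{0}=0$ to a vanishing weighted integral of $\Psi^\mathsf{T}(t)\,B(t)\,\dot x^\ast(t-1)$ over a unit interval, which forces $\dot x^\ast$ to change sign in $(q^\mathsf{T}-1,q^\mathsf{T})$ and thereby puts the zeros of $\Psi^\mathsf{T}$ in bijection with those of $\dot x^\ast$ over a period. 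Without an argument of this kind your concluding step does not go through; flagging the transfer of the zero number as ``delicate'' identifies the right obstacle but does not supply the proof.
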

\begin{proof}
    It is well-known that the eigenfunctions of the formal adjoint equation \eqref{formal-adjoint-equation} can be used to represent projections onto eigenspaces as in the identities \eqref{equation-formal-adjoint-eigenfunctions}. The core idea follows \cite{Henry70}, which shows that, if we consider $\Bar{L}^\mathsf{T}$, then the extension of $L^\mathsf{T}$ to the space of functions of bounded variation on the unit interval, then there exists a representation of the dual $C^\ast$ such that $\bar{L}^\mathsf{T}$ is the adjoint operator of $L$. Since the eigenfunctions of $\bar{L}^\mathsf{T}$ belong to $C^\mathsf{T}$, the spectra of $L^\mathsf{T}$ and $L$ coincide and the generalized eigenspaces have the same dimension. Hence, we follow \cite[Section 4]{Henry70} to find the unique generalized eigenfunction $\Psi^\mathsf{T}(t)$ associated to the critical eigenvalue $\mu_\mathrm{c}$ such that the identities \eqref{equation-formal-adjoint-eigenfunctions}--\eqref{identities-range} hold, $P_\Psi \dot x_0^\ast = 0$, and $Q_\Psi \Psi_0 = 0$. 
    
    However, it is not clear that $\Psi^\mathsf{T}(t)$ satisfies \eqref{pure-eigenfunction}. If $\mu_\mathrm{c} \neq 1$, then the geometric multiplicity of $\mu_\mathrm{c}$ is one and \eqref{pure-eigenfunction} holds. If $\mu_\mathrm{c} = 1$, then we may choose a second generalized eigenfunction $\xi^\mathsf{T}(t)$ of $L^\mathsf{T}$ such that $\ker(L^\mathsf{T} - \mathrm{Id})^2 = \operatorname{span}_\mathbb{R}\{\Psi^\mathsf{T}_0, \xi^\mathsf{T}_0\}$. By \cite{Henry70}, we can pick $\xi^\mathsf{T}_0$ in such a way that
    \begin{align}
        \left[\xi^\mathsf{T}_0, \dot {x}^\ast_0\right]_0 = 1, \quad \left[\xi^\mathsf{T}_0, \Psi_0\right]_0 = 0, \quad \text{and} \quad   \left[\xi^\mathsf{T}_0, \varphi\right]_0 = 0, \; \text{for all } \varphi \in  R^\mathrm{c}.
    \end{align}
    On the one hand, by Floquet theory \cite{HaLu93}, there exists a $\kappa^\mathsf{T} \in \mathbb{R}$ such that 
    \begin{align}\label{generalized-eigenfunction-Psi}
        \Psi^\mathsf{T}(t + {p_\ast}) = \Psi(t)^\mathsf{T} + \kappa^\mathsf{T} \xi^\mathsf{T}(t),
    \end{align}
    on the other hand, by Proposition \ref{propsition-derivative-bilinear-form}, we have that
    \begin{align}
        \begin{split}
            \left[\Psi^\mathsf{T}_{p_\ast}, \dot {x}^\ast_{p_\ast}\right]_{p_\ast} &= \left[\Psi^\mathsf{T}_0, \dot {x}^\ast_0\right]_0 + \kappa^\mathsf{T}\left[\xi^\mathsf{T}_0, \dot {x}^\ast_0\right]_0\\
            &= \kappa^\mathsf{T}\\
            &= 0.
        \end{split}
    \end{align}
    Hence \eqref{generalized-eigenfunction-Psi} implies \eqref{pure-eigenfunction}.
    
    Finally, we prove the claims on the number of zeros. The formal adjoint equation \eqref{formal-adjoint-equation} meets the assumptions of \cite[Theorem 2.2]{MPSe961}. Hence all zeros of $\Psi^\mathsf{T}(t)$ are simple. By contradiction, suppose that $\Psi^\mathsf{T}(t)$, has a different number of zeros than $\dot {x}^\ast(t)$ for $t \in (0, {p_\ast}]$. On the one hand, the number of zeros of $\Psi^\mathsf{T}(t)$ and $\dot {x}^\ast(t)$ on $(0,{p_\ast}]$ differs at least by two. On the other hand, by the zero number \cite{MPNu13}, the number of sign changes of $\Psi^\mathsf{T}(t)$ and $\dot {x}^\ast(t)$ over a unit-length interval may differ by one at most by one. Hence, if ${p_\ast} < 2$, then the pigeonhole principle yields a contradiction.

    To see the case ${p_\ast} \in {J}_1$, we claim that if $q^\mathsf{T} \in [0, {p_\ast})$ is such that $\Psi^\mathsf{T}(q^\mathsf{T}) = 0$, then $\dot {x}^\ast(t)$ changes signs in the interval $(q^\mathsf{T} - 1, q^\mathsf{T})$. By contradiction, suppose that $\dot {x}^\ast(t) \neq 0$ for all $t \in (q^\mathsf{T} - 1, q^\mathsf{T})$. By the zero number \cite{MPNu13}, we have that $\Psi^\mathsf{T}(t)$ has a constant sign on $(q^\mathsf{T} - 1, q^\mathsf{T})$. However, from Proposition \ref{propsition-derivative-bilinear-form}, we have that
    \begin{align}
        \begin{split}
            \left[\Psi^\mathsf{T}_{q^\mathsf{T}}, \dot {x}^\ast_{q^\mathsf{T}}\right]_{q^\mathsf{T}} &= \int_{q^\mathsf{T}}^{{q^\mathsf{T}} + 1} \Psi^\mathsf{T}(t)B(t)\dot {x}^\ast(t)\, \mathrm{d} t \\
            &= \left[\Psi^\mathsf{T}_{0}, \dot {x}^\ast_{0}\right]_{0} \\
            &= 0,
        \end{split}
    \end{align}
    and $\dot {x}^\ast(t)$ changes signs on $(q^\mathsf{T} - 1, q^\mathsf{T})$. Thus, the zeros of $\Psi^\mathsf{T}(t)$ and $\dot {x}^\ast(t)$ are in bijection over $[0, {p_\ast})$ and the proof is complete.
\end{proof}

\begin{proof}[Proof of Lemma \ref{lemma-projections}]
    Naturally, we consider the projections constructed in Lemma \ref{lemma-adjoint-eigenfunctions}. All we need to show is that $P_{\Psi}\varrho_{p_\ast} \neq 0$. Recall that $\varrho(t) = \partial_3 S(t, x_0^\ast; r_\ast)$ solves the initial value problem \eqref{IVP-rho}. Therefore, all we have to show is
    \begin{align}
        \int_0^{p_\ast} \Psi^\mathsf{T}(s) \dot {x}^\ast(s) \, \mathrm{d} s \neq 0
    \end{align}
    For clarity, we work under the additional assumption that $A(t) \equiv 0$ in \eqref{IVP-rho}. \textit{Step 4} below discusses the general scenario. We proceed by contradiction, in the following we suppose that
    \textbf{\begin{align}\label{lemma-nonzero-projection-eq2}
       \int_0^{p_\ast} \Psi^\mathsf{T}(s) \dot {x}^\ast(s) \, \mathrm{d} s = 0.
    \end{align}}\\
    \textit{Step 1:} First, we shall prove that
    \begin{align}\label{step-2-chi-integral}
        \begin{split}
            \int_0^{{p_\ast}} \Psi^\mathsf{T}(s) \ddot {x}^\ast(s) \, \mathrm{d} s = 0.
        \end{split}
    \end{align}
    Notice that $\eta(t) := t \dot {x}^\ast(t)$ solves the forced DDE 
    \begin{align}
        \dot \eta(t) = B(t)\eta(t - 1) + \dot {x}^\ast(t) + \ddot {x}^\ast(t),
    \end{align}
    and satisfies $\eta_{p_\ast} = \lambda\eta_0 + {p_\ast}\dot {x}^\ast_0$. Therefore, we obtain that
    \begin{align}\label{psi-relation-1}
        \begin{split}
            \left[\Psi^\mathsf{T}_{p_\ast}, \eta_{p_\ast}\right]_{p_\ast} &= \left[\Psi^\mathsf{T}_0, \eta_0\right]_0 + \int_0^{p_\ast}\Psi^\mathsf{T}(t)\dot {x}^\ast(t)\, \mathrm{d} t + \int_0^{p_\ast}\Psi^\mathsf{T}(t)\ddot {x}^\ast(t)\, \mathrm{d} t 
        \end{split}
    \end{align}
    and, using the identities \eqref{equation-formal-adjoint-eigenfunctions}, we obtain
    \begin{align}\label{psi-relation-2}
        \begin{split}
            \left[\Psi^\mathsf{T}_{p_\ast}, \eta_{p_\ast}\right]_{p_\ast} &= \left[\Psi^\mathsf{T}_{p_\ast}, \eta_0\right]_{p_\ast} + {p_\ast} \left[\Psi^\mathsf{T}_{p_\ast}, \dot {x}^\ast_0\right]_{p_\ast} \\
            &= \left[\Psi^\mathsf{T}_0, \eta_0\right]_0 + {p_\ast} \left[\Psi^\mathsf{T}_0, \dot {x}^\ast_0\right]_0 \\
            &= \left[\Psi^\mathsf{T}_0, \eta_0\right]_0.
        \end{split}
    \end{align}
    Combining the relations \eqref{lemma-nonzero-projection-eq2}, \eqref{psi-relation-1}, and \eqref{psi-relation-2}, we conclude that
    \begin{align}
        \int_0^{p_\ast}\Psi^\mathsf{T}(t)\ddot {x}^\ast(t)\, \mathrm{d} t = -\int_0^{p_\ast}\Psi^\mathsf{T}(t)\dot {x}^\ast(t)\, \mathrm{d} t = 0.
    \end{align}\\
    \textit{Step 2:} If we denote the time of depth by $q$, then the following identities hold:
    \begin{align}\label{step-3-zero-integrals}
        \int_0^1 \Psi^\mathsf{T}(t)\ddot {x}^\ast(t)\, \mathrm{d} t &= \int_1^{p_\ast} \Psi^\mathsf{T}(t)\ddot {x}^\ast(t)\, \mathrm{d} t = 0 \quad  \text{ and}\\\label{step-3-zero-integrals-q}
        \int_{q}^{q + 1} \Psi^\mathsf{T}(t)\ddot {x}^\ast(t)\, \mathrm{d} t &= \int_{q}^{q + {p_\ast}} \Psi^\mathsf{T}(t)\ddot {x}^\ast(t)\, \mathrm{d} t = 0.
    \end{align}
    Indeed, let us define the auxiliary function $\chi(t)$ that solves
    \begin{align}
        \begin{split}\label{equation-eta}
                \dot \chi(t) &= B(t)\chi(t - 1) + \ddot {x}^\ast(t),\\
            \chi_0(\vartheta) &= 0,\quad \vartheta\in[-1, 0].
        \end{split}
    \end{align}
    Direct integration of \eqref{equation-eta} shows that $\chi(t)$ is given by
    \begin{align}\label{eta-explicit}
        \chi(t) = \begin{cases}
            0,& t\in[-1,0],\\
            \dot {x}^\ast(t),&t\in [0,1].
        \end{cases}
    \end{align}
    On the one hand, by Proposition \ref{propsition-derivative-bilinear-form}, we have that
    \begin{align}
        \begin{split}
            \left[\Psi_1^\mathsf{T}, \chi_1\right]_1 &= \left[\Psi_0^\mathsf{T}, \chi_0\right]_0 + \int_0^1 \Psi^\mathsf{T}(t) \ddot {x}^\ast(t) \, \mathrm{d} t\\
            &= \int_0^1 \Psi^\mathsf{T}(t) \ddot {x}^\ast(t) \, \mathrm{d} t,
        \end{split}
    \end{align}
    on the other hand, by the identities \eqref{equation-formal-adjoint-eigenfunctions} and \eqref{eta-explicit}, we obtain
    \begin{align}
        \begin{split}
            \left[\Psi_1^\mathsf{T}, \chi_1\right]_1 &= \left[\Psi_1^\mathsf{T}, \dot {x}^\ast_1\right]_1 \\
            &= 0.            
        \end{split}
    \end{align}
    Hence, we just showed 
    \begin{equation}
        \int_0^1 \Psi^\mathsf{T}(t) \ddot {x}^\ast(t) \, \mathrm{d} t = 0,
    \end{equation}
    and $\int_1^{p_\ast} \Psi^\mathsf{T}(t) \ddot {x}^\ast(t) \, \mathrm{d} t = 0$ follows by \textit{Step 1}. 
    
    To see \eqref{step-3-zero-integrals-q}, notice that, since $\dot {x}^\ast(q) = 0$, the argument can be replicated if we consider the $\chi$-equation \eqref{equation-eta} with initial time $q$, thus showing
    \begin{align}
        \int_q^{q + 1}\Psi^\mathsf{T}(t) \ddot {x}^\ast(t) \, \mathrm{d} t = 0.
    \end{align}
    Finally, since $\gamma_{\ast}$ is nonhyperbolic, Proposition \ref{proposition-spectrum} yields the critical eigenvalue $\mu_\mathrm{c} = 1$ and, by the periodicity of $\Psi^\mathsf{T}(t)$ in \eqref{pure-eigenfunction}, we obtain
    \begin{align}
        \int_{s}^{s + {p_\ast}}\Psi^\mathsf{T}(t) \ddot {x}^\ast(t) \, \mathrm{d} t = 0, \quad \text{for all } s\in \mathbb{R},
    \end{align}
    this shows the identities \eqref{step-3-zero-integrals-q}.

    \textit{Step 3 ($n$ odd):} Let ${p_\ast} \in {J}_n$ with $n$ odd, then Lemma \ref{lemma-relative-zero-position} shows that ${n_1} := \lfloor(n - 1)/2 \rfloor$ is the unique ${n_1} \in \mathbb{N}_0$ such that ${n_1} {p_\ast} < 1 < ({n_1} + 1) {p_\ast}$ and
    \begin{align}
        {n_1} {p_\ast} < 1 < q + {n_1} {p_\ast} < q + 1 < ({n_1} + 1){p_\ast}.
    \end{align}    
    By \textit{Step 2}, we have that
    \begin{align}\label{zero-integrals-small-domain}
        \int_{{n_1} {p_\ast}}^1 \Psi^\mathsf{T}(t)\ddot {x}^\ast(t) \, \mathrm{d}  t = 0 \quad \text{and} \quad \int_{q+{n_1} {p_\ast}}^{q + 1} \Psi^\mathsf{T}(t)\ddot {x}^\ast(t) \, \mathrm{d}  t = 0.
    \end{align}
    For $t\in({n_1} {p_\ast}, ({n_1} + 1){p_\ast})$, the only zeros of $\ddot {x}^\ast(t)$ lie at $1$ and $q + 1$. Hence, $\ddot {x}^\ast(t)$ has a definite sign over $({n_1} {p_\ast}, 1)$ and $(q + {n_1} {p_\ast}, q + 1)$. Combining the identities \eqref{zero-integrals-small-domain} and Lemma \ref{lemma-adjoint-eigenfunctions}, we conclude that the only zeros of $\Psi^\mathsf{T}(t)$ over the interval $({n_1} {p_\ast}, ({n_1} +1){p_\ast})$ lie at $s_1 \in ({n_1} {p_\ast}, 1)$ and $s_2 \in (q + {n_1} {p_\ast}, q + 1)$.

    Next, consider the function 
    \begin{align}\label{function-integral}
        {X}(t) := \int_{0}^t \Psi^\mathsf{T}(s) \ddot {x}^\ast(s) \, \mathrm{d} s, \quad t\in ({n_1} {p_\ast}, ({n_1} + 1){p_\ast}),
    \end{align}
    by the arguments above, the local extrema of ${X}(t)$ occur at times $s_1 < 1 < s_2 < q + 1$ satisfying
    \begin{align}\label{extrema-position-odd}
        {n_1} {p_\ast} < s_1 < 1 < q + {n_1} {p_\ast} < s_2 < q + 1 < ({n_1} + 1){p_\ast},
    \end{align}
    From \textit{Step 2}, we have that ${X}(1) = {X}({p_\ast}) = 0$. Together with the ordering of the extrema in \eqref{extrema-position-odd}, we obtain that $X(q  + {n_1} {p_\ast}) \neq  X(q + 1)$. However, this is in contradiction to \eqref{zero-integrals-small-domain}; see Figure \ref{fig7}.
    
    \textit{Step 3 ($n$ even):} The argument is analogous to the odd case, but the relative placement of the zeros of $\Psi^\mathsf{T}(t)$ changes. Let ${p_\ast} \in {J}_n$ with $n$ even, then   Lemma \ref{lemma-relative-zero-position} yields a unique ${n_1} \in \mathbb{N}_0$ such that ${n_1} {p_\ast} < 1 < ({n_1} + 1) {p_\ast}$ and
    \begin{align}
        {n_1} {p_\ast} < q + {n_1} {p_\ast} + 1 - {p_\ast} < q + {n_1} {p_\ast} < 1 < ({n_1} + 1){p_\ast}.
    \end{align}    
    Notice that $\ddot {x}^\ast(t)$ has constant sign over $(q + 1 - {p_\ast}, q + {n_1} {p_\ast})$ and $(1, ({n_1} + 1){p_\ast})$ and notice that \textit{Step 2} yields the identities 
    \begin{align}\label{zero-integrals-small-domain-b}
        \int_{q + 1 - {p_\ast}}^{q + {n_1} {p_\ast}} \Psi^\mathsf{T}(t)\ddot {x}^\ast(t) \, \mathrm{d}  t = 0 \quad \text{and} \quad \int_{1}^{({n_1} + 1) {p_\ast}} \Psi^\mathsf{T}(t)\ddot {x}^\ast(t) \, \mathrm{d}  t = 0.
    \end{align}
    Thus, the two zeros of $\Psi^\mathsf{T}(t)$ with $t\in ({n_1} {p_\ast}, ({n_1} + 1){p_\ast})$ lie at $s_1$ and $s_2$ such that
    \begin{align}\label{extrema-position-even}
        {n_1} {p_\ast} < q + 1 - {p_\ast} < s_1 < q + {n_1} {p_\ast} < 1 < s_2 < ({n_1} + 1){p_\ast}.
    \end{align}
    In particular, the function ${X}(t)$ defined in \eqref{function-integral} possesses four local extrema at $q + 1 - {p_\ast} < s_1 < q < s_2$. Again, recalling from \textit{Step 1} and \textit{Step 2} that ${X}(1) = {X}({p_\ast}) = 0$, we obtain ${X}(q + 1 - {p_\ast}) \neq {X}(q + {n_1} {p_\ast})$, in contradiction to \eqref{zero-integrals-small-domain-b}; see Figure \ref{fig7}.

    \begin{figure}
        \centering          \includegraphics[width= 0.87\textwidth]{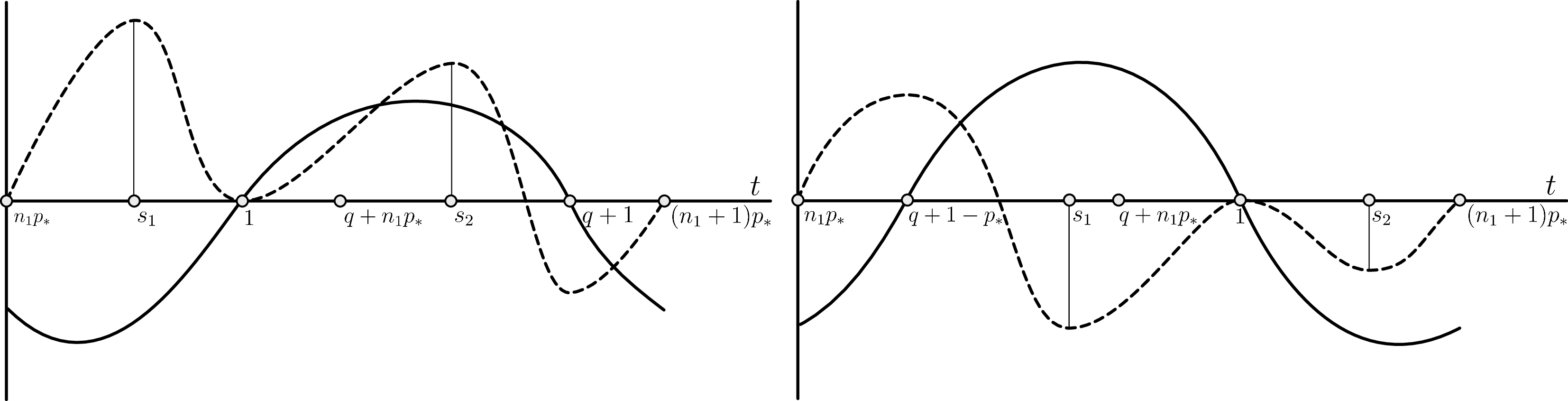}
        \caption{\em Plots of $\ddot {x}^\ast(t)$ (solid) and ${X}(t)$ (dashed) over the interval $({n_1} {p_\ast}, ({n_1} + 1){p_\ast})$.  Left: If $p_\ast \in J_n $ with an odd $n$, then the distribution of extrema of $X(t)$ is given by \eqref{extrema-position-odd}. Together with $X(1) = X(p_\ast) = 0$, we see that $X(q + n_1p_\ast) \neq X(q + 1)$, yielding a contradiction to \eqref{zero-integrals-small-domain}. Right: If $p_\ast \in J_n $ with $n$ even, then the extrema of $X(t)$ satisfy \eqref{extrema-position-even}. As in the odd case, we obtain $X(q + 1 - p_\ast) \neq X(q + n_1p_\ast)$, in contradiction to the integral identity \eqref{zero-integrals-small-domain-b}.}
        \label{fig7}
    \end{figure}
    
    \textit{Step 4:} Finally, we show how to modify the proof for the general case $A(t) \neq 0$. Notice that we can transform $\dot {x}^\ast_0$
    \begin{align}\label{equation-remove-A}
         \xi(t) := \exp\left(-\int_0^t A(s) \, \mathrm{d} s\right)\dot {x}^\ast(t)
    \end{align}
    Then $\xi(t)$ solves
    \begin{align}
        \dot y(t) = \tilde{B}(t) y(t-1),
    \end{align}
    with the modified ${p_\ast}$-periodic coefficient
    \begin{align}
        \tilde{B}(t):= \exp\left(\int_{t-1}^t A(s)\, \mathrm{d} s\right) B(t).
    \end{align}
    In doing so, we have multiplied the spectrum of the monodromy operator $L$ in Proposition \ref{proposition-spectrum} by a positive constant
    \begin{align}
        \exp\left(-\int_0^{p_\ast} A(s)\, \mathrm{d}  s\right) > 0.
    \end{align}
    However, our transformation preserves the sign changes of the eigenfunctions. We perform an analogous transformation so that the formal adjoint equation \eqref{formal-adjoint-equation} becomes
    \begin{align}\label{adjoint-without-A}
        \dot y^\mathsf{T}(t) = - \tilde{B}(t + 1) y^\mathsf{T}(t + 1),
    \end{align}
    with the dual eigenfunction
    \begin{align}\label{eq-new-psi}
        \tilde{\Psi}^\mathsf{T}(t) := \exp\left(\int_0^t A(s)\, \mathrm{d} s\right)\Psi^\mathsf{T}(t),
    \end{align}
    Hence, we have to show
    \begin{align}
        \int_0^{p_\ast}\tilde{\Psi}^\mathsf{T}(t)\xi(t) \, \mathrm{d} t \neq 0.
    \end{align}
    However, the integral identities \eqref{step-2-chi-integral} and \eqref{step-3-zero-integrals}--\eqref{step-3-zero-integrals-q} hold if we replace $\Psi^\mathsf{T}(t)$ by $\tilde{\Psi}^\mathsf{T}(t)$ and $\ddot {x}^\ast(t)$ by $\dot \xi(t)$. Moreover, our transformations \eqref{equation-remove-A} and \eqref{eq-new-psi} preserve the information on the placement of the zeros of $\Psi^\mathsf{T}(t)$ and $\ddot {x}^\ast(t)$. Therefore, arguing as in \textit{Step 3} completes the proof.
\end{proof}

\textbf{Acknowledgement.} The author is greatly indebted to Chun-Hsiung Hsia and Jia-Yuan Dai for their support and for many insightful discussions.

\textbf{Funding.} A. L.-N. has been supported by NSTC grant 113-2123-M-002-009.

\bibliographystyle{abbrv}
\bibliography{references}

\end{document}